\setlist[enumerate,1]{label={(\alph*)}}
\definecolor{crimson}{rgb}{0.85, 0.08, 0.23}
\definecolor{limegreen}{rgb}{0.20, 0.78, 0.20}
\definecolor{steelblue}{rgb}{0.27, 0.51, 0.70}
\tikzstyle{vertex} = [shape=circle, fill=black, draw,minimum size=5pt, inner sep=0pt]
\tikzstyle{edge} = [thick]
\tikzstyle{arc} = [->,thick, > = stealth]
\newenvironment{subproof}[1][\proofname]{%
  \begin{proof}[#1]%
}{%
  \end{proof}%
}
\newenvironment{customcase}[1]
{\innercustomcase}
{\endinnercustomcase}
\newenvironment{customsubcase}[1]
{\innercustomsubcase}
{\endinnercustomsubcase}
\newenvironment{customclaim}[1]
{\innercustomclaim}
{\endinnercustomclaim}
\newenvironment{customtheorem}[1]
{\innercustomtheorem}
{\endinnercustomtheorem}
\newtheorem{theorem}{Theorem}[section]
\newtheorem{lemma}[theorem]{Lemma}
\newtheorem{claim}[]{Claim}
\newtheorem{corollary}[theorem]{Corollary}
\newtheorem{proposition}[theorem]{Proposition}
\newtheorem{observation}[theorem]{Observation}
\theoremstyle{definition}
\theoremstyle{remark}
\newcommand{\Ptwo}{\Delta(1,2,2)}
\title{The structure of $\Delta(1, 2, 2)$-free tournaments}
\author[1,2]{Seokbeom Kim\thanks{Supported by the Institute for Basic Science (IBS-R029-C1).}}
\author[3]{Taite LaGrange}
\author[3]{Mathieu Rundstr\"{o}m}
\author[4]{Arpan Sadhukhan\thanks{Sadhukhan is currently affiliated with the Indian Institute of Technology, Dharwad. This work was done when he was supported by the Dutch Research Council (NWO) through Gravitation-grant NETWORKS-024.002.003.}}
\author[3]{Sophie Spirkl\thanks{We acknowledge the support of the Natural Sciences and Engineering Research Council of Canada (NSERC), [funding reference number RGPIN-2020-03912]. Cette recherche a \'et\'e financ\'ee par le Conseil de recherches en sciences naturelles et en g\'enie du Canada (CRSNG), [num\'ero de r\'ef\'erence RGPIN-2020-03912]. This project was funded in part by the Government of Ontario. This research was conducted while Spirkl was an Alfred P. Sloan Fellow.}}
\affil[1]{Department of Mathematical Sciences, KAIST, Daejeon, South~Korea}
\affil[2]{Discrete Mathematics Group, Institute for Basic Science (IBS), Daejeon,~South~Korea}
\affil[3]{University of Waterloo, Waterloo, ON, Canada}
\affil[4]{Eindhoven University of Technology, Eindhoven, Netherlands}
\affil[ ]{\small Email addresses: \texttt{seokbeom@kaist.ac.kr, tlagrang@uwaterloo.ca, 
mrundstrom@uwaterloo.ca,  ra.a.sadhukhan@iitdh.ac.in, sspirkl@uwaterloo.ca}}
\date{\today}
\begin{document}
\maketitle

\begin{abstract}
    We extend the list of tournaments $S$ for which the complete structural description for tournaments excluding $S$ as a subtournament is known.
    Specifically, let $\Delta(1, 2, 2)$ be a tournament on five vertices obtained from a cyclic triangle by substituting a two-vertex tournament for two of its vertices.
    In this paper, we show that tournaments excluding $\Delta(1, 2, 2)$ as a subtournament are either isomorphic to one of three small tournaments, obtained from a transitive tournament by reversing edges in vertex-disjoint directed paths, or obtained from a smaller tournament with the same property by applying one of two operations.
    In particular, one of these operations creates a homogeneous set that induces a subtournament isomorphic to one of three fixed tournaments, and the other creates a homogeneous pair such that their union induces a subtournament isomorphic to a fixed tournament.
    As an application of this result, we present an upper bound for the chromatic number, a lower bound for the size of a largest transitive subtournament, and a lower bound for the number of vertex-disjoint cyclic triangles for such tournaments.
    The bounds that we present are all best possible.
\end{abstract}

\section{Introduction} \label{Section:intro}
Finding an easy recipe to construct all graphs with a desired property is of major interest in structural graph theory.
In particular, there are several interesting results that deal with \emph{$H$-free} graphs for a fixed graph $H$, where a graph is $H$-free if no induced subgraph of it is isomorphic to $H$.

Simple examples on this topic are graphs forbidding a short path.
For an integer $t \geq 1$, let $P_t$ be the path with $t$ vertices.
For instance, $P_2$-free graphs can be constructed from graphs with one vertex by taking disjoint unions; $P_3$-free graphs, which are also called \emph{cluster graphs}, can be constructed from complete graphs by taking disjoint unions.
A less trivial example is the class of $P_4$-free graphs, which are also called \emph{cographs}.
It is well known that every cograph can be constructed from graphs with a single vertex by taking disjoint unions and complements (see~\cite{CLS81}, for example, for a proof).

To the best of our knowledge, there are only four connected graphs $H$, up to taking complements, other than paths such that the structure of $H$-free graphs is known; see Figure \ref{fig:1}.
Two relatively easy examples among them are the \emph{paw} and the \emph{diamond}, where the paw is a triangle with exactly one pendant vertex and the diamond is a graph on four vertices with exactly one pair of non-adjacent vertices.
Olariu~\cite{Olariu88} proved that every component of a paw-free graph is either triangle-free or a complete multipartite graph, and Wallis and Zhang~\cite{WZ90} proved that any two distinct maximal cliques in a diamond-free graph share at most one vertex.
In particular, the latter implies that a graph $G$ is diamond-free if and only if there is a bipartite graph $H$ without a cycle of length at most $6$, and with a bipartition $(A, V(G))$ such that $G$ can be obtained from $H$ by making the neighbourhood of each vertex in $A$ into a clique and deleting $A$.

The other two examples are more complicated. 
The \emph{claw} is a tree with three leaves and one non-leaf vertex and the \emph{bull} is a graph obtained from a triangle by adding two new pendant vertices at two distinct vertices of the triangle.
Claw-free graphs or bull-free graphs have more involved structure theorems, showing that they can also be constructed from graphs in some ``basic'' graph classes by applying several operations.
We refer to~\cite{CS05claw} for the structure theorem for claw-free graphs and~\cite{Chud12A,Chud12B} for bull-free graphs.

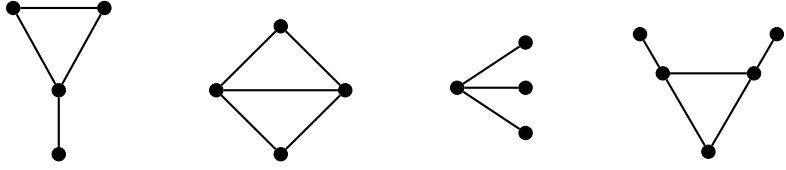
\begin{figure}[t]
    \begin{center}	
        \begin{tikzpicture}[scale=0.6]
            \tikzset{vertex/.style = {shape=circle, fill=black, draw,minimum size=5pt, inner sep=0pt}}
            \tikzset{arc/.style = {->,thick, > = stealth}}
            \tikzset{edge/.style = {thick}}
                             
            \node[vertex] (1) at (0, 0) {};
            \node[vertex] (2) at (0, 1.414) {};
            \node[vertex] (3) at (-1, 3.232) {};
            \node[vertex] (4) at (1, 3.232) {};

            \draw[edge] (1) to (2);
            \draw[edge] (2) to (3);
            \draw[edge] (2) to (4);
            \draw[edge] (3) to (4);
        \end{tikzpicture} \quad \quad \quad
        \begin{tikzpicture}[scale=0.6]
            \tikzset{vertex/.style = {shape=circle, fill=black, draw,minimum size=5pt, inner sep=0pt}}
            \tikzset{arc/.style = {->,thick, > = stealth}}
            \tikzset{edge/.style = {thick}}
                             
            \node[vertex] (1) at (0, 0) {};
            \node[vertex] (2) at (-1.414, 1.414) {};
            \node[vertex] (3) at (1.414, 1.414) {};
            \node[vertex] (4) at (0, 2.828) {};

            \draw[edge] (1) to (2);
            \draw[edge] (1) to (3);
            \draw[edge] (2) to (3);
            \draw[edge] (2) to (4);
            \draw[edge] (3) to (4);
        \end{tikzpicture} \quad \quad \quad
        \begin{tikzpicture}[scale=0.6]
            \tikzset{vertex/.style = {shape=circle, fill=black, draw,minimum size=5pt, inner sep=0pt}}
            \tikzset{arc/.style = {->,thick, > = stealth}}
            \tikzset{edge/.style = {thick}}
                             
            \node () at (0, 0) {};
            \node[vertex] (1) at (0, 0.414) {};
            \node[vertex] (2) at (-1.5, 1.414) {};
            \node[vertex] (3) at (0, 1.414) {};
            \node[vertex] (4) at (0, 2.414) {};

            \draw[edge] (1) to (2);
            \draw[edge] (2) to (3);
            \draw[edge] (2) to (4);
        \end{tikzpicture} \quad \quad \quad
        \begin{tikzpicture}[scale=0.6]
            \tikzset{vertex/.style = {shape=circle, fill=black, draw,minimum size=5pt, inner sep=0pt}}
            \tikzset{arc/.style = {->,thick, > = stealth}}
            \tikzset{edge/.style = {thick}}
            
            \node () at (0, 0) {};

            \node[vertex] (1) at (0, 0) {};
            \node[vertex] (2) at (-1, 1.732) {};
            \node[vertex] (3) at (1, 1.732) {};
            \node[vertex] (4) at (-1.5, 2.598) {};
            \node[vertex] (5) at (1.5, 2.598) {};

            \draw[edge] (1) to (2);
            \draw[edge] (2) to (3);
            \draw[edge] (1) to (3);
            \draw[edge] (2) to (4);
            \draw[edge] (3) to (5);
        \end{tikzpicture} 
    \caption{Paw, diamond, claw, and bull.} \label{fig:1}
    \end{center}
\end{figure} 

As digraphs can be thought of as a generalization of (undirected) graphs, it is natural to ask for the construction for digraphs with a desired property.
Recently, Seymour~\cite{Seymour24+} showed how to construct all directed graphs such that every directed cycle has length three, and Hatzel, Kreutzer, Protopapas, Reich, Stamoulis, and Wiederrecht~\cite{HKPRSW24+} gave a construction for all strongly $2$-connected directed graphs.
However, finding structure theorems for directed graphs is no less difficult than for graphs, so a common approach in this area has been to study the structure of a special type of directed graphs.
The goal of this paper is to find a structure theorem of directed graphs with this perspective.

\subsection{Main results}

A \emph{tournament} is a directed graph such that for every pair of two distinct vertices, there is exactly one edge between them.
Equivalently, a tournament is an orientation of a complete graph.
A tournament~$T$ \emph{contains} a tournament $S$ if there is a subtournament of $T$ isomorphic to $S$, and such a subtournament is called a \emph{copy} of $S$.
If $T$ does not contain $S$, we say $T$ is \emph{$S$-free}.
For a tournament $T$ and $X \subseteq V(T)$,~$T[X]$ denotes the subtournament of $T$ induced on $X$.

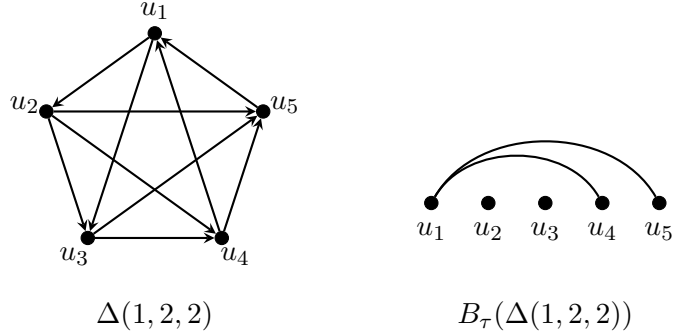
\begin{figure}[t]
    \begin{center}	
        \begin{tikzpicture}[scale=0.75]
            \tikzset{vertex/.style = {shape=circle, fill=black, draw,minimum size=5pt, inner sep=0pt}}
            \tikzset{arc/.style = {->,thick, > = stealth}}
            \tikzset{edge/.style = {thick}}
                             
            \foreach \i in {1,2,...,5}{
                  \node[vertex] (\i) at ({90+360/5*(\i-1)}:2) {};
                 }
       		\foreach \i in {1,2,...,5}{
                  \node at ({90+360/5*(\i-1)}:2.4) {$u_{\i}$};
                 }

            \node at (0, -3) {$\Delta(1, 2, 2)$};

            \draw[arc] (1) to (2);
            \draw[arc] (1) to (3);
            \draw[arc] (2) to (3);
            \draw[arc] (2) to (4);
            \draw[arc] (2) to (5);
            \draw[arc] (3) to (4);
            \draw[arc] (3) to (5);
            \draw[arc] (4) to (1);
            \draw[arc] (4) to (5);
            \draw[arc] (5) to (1);
        \end{tikzpicture} \quad \quad \quad
        \begin{tikzpicture}[scale=0.75]
            \tikzset{vertex/.style = {shape=circle, fill=black, draw,minimum size=5pt, inner sep=0pt}}
            \tikzset{arc/.style = {->,thick, > = stealth}}
            \tikzset{edge/.style = {thick}}
                             
            \foreach \i in {1,2,...,5}{
                  \node[vertex] (\i) at (\i-3, -1) {};
                 }
       		\foreach \i in {1,2,...,5}{
                  \node at (\i-3, -1.5) {$u_{\i}$};
                 }

            \node at (0, -3) {$B_\tau(\Delta(1, 2, 2))$};

            \draw[edge, bend left=60] (1) to (4);
            \draw[edge, bend left=60] (1) to (5);
        \end{tikzpicture}
    \caption{The tournament $\Delta(1, 2, 2)$ and its backedge graph with respect to the ordering $\tau = (u_1, \ldots, u_5)$.}
    \label{Fig:P2}
    \end{center}
\end{figure} 

Let $\Delta(1, 2, 2)$ be the tournament obtained from a cyclic triangle by substituting for two of its vertices a tournament with two vertices (see~\Cref{Fig:P2} for an illustration).
Our main result presents the complete structural description of $\Delta(1, 2, 2)$-free tournaments.
Before explaining our result, we first introduce the well-known concept of \emph{backedge graphs}.
As tournaments become more complicated, it is often useful to describe them in terms of their backedge graphs.

An \emph{ordering} of a graph or a tournament on $n$ vertices is an enumeration $(v_1, \ldots, v_n)$ of its vertices, and an \emph{ordered graph} is a graph with an ordering.
Let $T$ be a tournament on $n$ vertices and $\sigma = (v_1, \ldots, v_n)$ be an ordering of $T$.
We say an edge $v_i v_j \in E(T)$ is a \emph{backedge under $\sigma$} if $i > j$.
The \emph{backedge graph of $T$ with respect to $\sigma$}, denoted by $B_\sigma(T)$, is an ordered (undirected) graph with vertex set $V(T)$, edge set $\{v_i v_j : \text{$i>j$ and $v_i v_j \in E(T)$}\}$, and ordering $\sigma$.
We also say that a graph $G$ is a \emph{backedge graph} of $T$ if there is an ordering $\sigma$ of $T$ such that $G$ is isomorphic to $B_\sigma(T)$ when we forget the ordering of $B_\sigma(T)$.
For example, a tournament is transitive if and only if it admits an edgeless backedge graph. 

Given an ordered graph $G$ with ordering $(v_1, \ldots, v_n)$, a \emph{monotone path} is an induced path $v_{i(1)} \cdots v_{i(\ell)}$ in $G$, where $\ell \geq 1$ is an integer, such that the sequence $i(1), \ldots, i(\ell)$ is monotone.
For each $i \in \{5, 6, 7\}$, let $H_i$ be an ordered graph with ordering $(v_1, \ldots, v_i)$ such that
\[
E(H_5) = \{v_1 v_3, v_1v_5, v_2 v_5, v_3 v_4\}, \quad 
E(H_6) = \{v_1 v_2, v_1 v_6, v_2 v_3, v_2 v_4, v_3 v_5, v_4 v_6\},
\]
and
\[
E(H_7) = E(H_6) \cup \{v_1 v_7, v_3 v_7, v_4 v_7\}.
\]

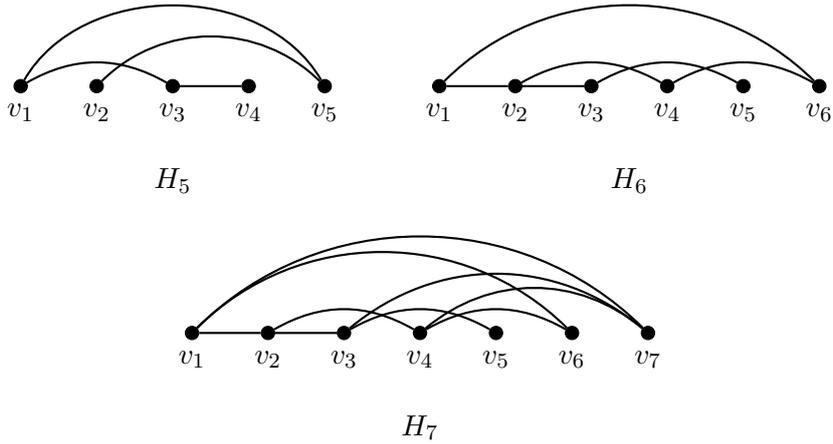
\begin{figure}[t]
    \centering
        \begin{tikzpicture}
            \tikzset{vertex/.style = {shape=circle, fill=black, draw,minimum size=5pt, inner sep=0pt}}
            \tikzset{arc/.style = {->,thick, > = stealth}}
            \tikzset{edge/.style = {thick}}
                             
            \foreach \i in {1,2,...,5}{
              \node[vertex] (\i) at (-3+\i, 0) {};
            }
             
            \foreach \i in {1,2,...,5}{
              \node at (-3+\i, -0.35) {$v_{\i}$};
            }

            \node at (0, -1.25) {$H_5$};

            \draw[edge, bend left=30] (1) to (3);
            \draw[edge, bend left=60] (1) to (5);
            \draw[edge, bend left=45] (2) to (5);
            \draw[edge] (3) to (4);
        \end{tikzpicture}  \quad\quad
        \begin{tikzpicture}
            \tikzset{vertex/.style = {shape=circle, fill=black, draw,minimum size=5pt, inner sep=0pt}}
            \tikzset{arc/.style = {->,thick, > = stealth}}
            \tikzset{edge/.style = {thick}}
                             
            \foreach \i in {1,2,...,6}{
              \node[vertex] (\i) at (-3+\i, 0) {};
            }
             
            \foreach \i in {1,2,...,6}{
              \node at (-3+\i, -0.35) {$v_{\i}$};
            }

            \node at (0.5, -1.25) {$H_6$};

            \draw[edge] (1) to (2);
            \draw[edge, bend left=45] (1) to (6);
            \draw[edge] (2) to (3);
            \draw[edge, bend left=30] (2) to (4);
            \draw[edge, bend left=30] (3) to (5);
            \draw[edge, bend left=30] (4) to (6);
        \end{tikzpicture}
        
        \begin{tikzpicture}
            \tikzset{vertex/.style = {shape=circle, fill=black, draw,minimum size=5pt, inner sep=0pt}}
            \tikzset{arc/.style = {->,thick, > = stealth}}
            \tikzset{edge/.style = {thick}}
                             
           \foreach \i in {1,2,...,7}{
              \node[vertex] (\i) at (-4+\i, 0) {};
            }
             
            \foreach \i in {1,2,...,7}{
              \node at (-4+\i, -0.35) {$v_{\i}$};
            }

            \node at (0, -1.25) {$H_7$};

            \draw[edge] (1) to (2);
            \draw[edge, bend left=45] (1) to (6);
            \draw[edge] (2) to (3);
            \draw[edge, bend left=30] (2) to (4);
            \draw[edge, bend left=30] (3) to (5);
            \draw[edge, bend left=30] (4) to (6);
            \draw[edge, bend left=45] (1) to (7);
            \draw[edge, bend left=40] (3) to (7);
            \draw[edge, bend left=40] (4) to (7);
        \end{tikzpicture} 
    \caption{Three ordered graphs $H_5$, $H_6$, and $H_7$.}
    \label{Fig:Backedge-graph-components}
\end{figure} 

See~\Cref{Fig:Backedge-graph-components} for an illustration.
Observe that all ordered graphs $H_5, H_6, H_7$ are connected; in fact, these are the backedge graphs of special tournaments, and we have chosen these (non-standard) backedge graphs because it will be convenient for stating~\Cref{Thm:main-backedge-intro} below. 
In $H_6$, the two sets of vertices $\{v_1, v_2, v_3\}, \{v_4, v_5, v_6\}$ are called the \emph{flocks} of $H_6$.

Our main theorem is as follows.
We say a component of an ordered graph $G$ is \emph{consecutive} if the vertices of the component are consecutive in the ordering of $G$.

\begin{theorem} \label{Thm:main-backedge-intro}
    Every $\Delta(1, 2, 2)$-free tournament has a backedge graph in which each component is either a monotone path or isomorphic to one of $H_5, H_6, H_7$.
    In particular, the components isomorphic to $H_5$ or $H_7$ are consecutive, and the vertices in each flock of components isomorphic to $H_6$ are consecutive in the ordering of the backedge graph.
\end{theorem}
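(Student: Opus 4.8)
The plan is to work throughout with the following elementary reformulation of the hypothesis: a tournament contains $\Delta(1,2,2)$ if and only if it has a vertex $a$ together with two out-neighbours $b_1,b_2$ and two in-neighbours $c_1,c_2$ such that each of $b_1,b_2$ beats each of $c_1,c_2$. (Such a configuration is exactly a copy of $\Delta(1,2,2)$ with apex $a$ and the two flocks $\{b_1,b_2\}$, $\{c_1,c_2\}$, and conversely.) Equivalently, $T$ is $\Delta(1,2,2)$-free precisely when, for every vertex $a$, the bipartite ``domination'' graph on $(N^+(a),N^-(a))$ — with $b\in N^+(a)$ joined to $c\in N^-(a)$ whenever $b\to c$ — is $K_{2,2}$-free. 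I would use this local condition, applied at cleverly chosen vertices, as the main engine for excluding configurations.

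I would then argue by induction on $|V(T)|$. If $T$ is not strongly connected, I list its strong components in the order of the (transitive) condensation and concatenate orderings obtained for each component by induction; since every arc between distinct strong components is forward, no backedge crosses between components, and $B_\sigma(T)$ is the disjoint union of the backedge graphs of the components. This reduces the problem to a single strongly connected $T$. Here I split according to whether $T$ admits a nontrivial \emph{homogeneous set} $M$ or a nontrivial \emph{homogeneous pair} $(A,B)$. In either case I contract the module(s), apply induction to the smaller $\Delta(1,2,2)$-free tournament, and re-expand. The point for the ordering is that a homogeneous set can be ordered with all of its dominators before it and all of its dominatees after it, so that no backedge joins $M$ to the rest; thus $T[M]$ becomes a \emph{consecutive} block of the backedge graph, which yields the consecutiveness of the $H_5$- and $H_7$-components, while the analogous treatment of a homogeneous pair makes each of its two parts contiguous — exactly the statement that the flocks of an $H_6$-component are consecutive.

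It remains to treat the \emph{irreducible} case: $T$ strongly connected with no usable homogeneous set or pair. Here I would fix a \emph{locally optimal} ordering $\sigma$ — one not improvable by an adjacent transposition or by moving a single vertex — which already forces, for instance, that no two consecutive vertices form a backedge and restricts how backedges may nest. The goal is to show that, under these constraints together with the $K_{2,2}$-free condition, $B_\sigma(T)$ is a disjoint union of monotone paths, unless $T$ is one of finitely many small exceptional tournaments; the exceptional backedge graphs that arise are precisely $H_5$, $H_6$, and $H_7$. Translating back, a disjoint union of monotone paths is exactly the backedge graph of a transitive tournament with vertex-disjoint reversed directed paths, matching the ``generic'' description.

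The step I expect to be the main obstacle is this last classification of the irreducible tournaments — in effect, establishing the underlying structural dichotomy. Concretely, one must rule out every way in which backedges could \emph{branch} (a vertex of backedge-degree at least three) or \emph{cross} (two interleaving backedges) without either collapsing to a monotone path or completing one of the rigid gadgets $H_5,H_6,H_7$. Each forbidden pattern has to be eliminated by producing, at some vertex, two out-neighbours that jointly dominate two in-neighbours — that is, a copy of $\Delta(1,2,2)$ — and then checking that the only configurations surviving this analysis are the three gadgets, each of which indeed carries the claimed homogeneous-set or homogeneous-pair structure. Assembling this finite but delicate case analysis, and verifying that it is exhaustive, is where the real work of the proof lies.
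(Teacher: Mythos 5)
There are two genuine gaps here, and together they mean the proposal is a plan rather than a proof. The first you name yourself: the entire classification in the ``irreducible'' case is deferred as ``where the real work of the proof lies.'' That classification is not a finite check that can be waved at; it is the bulk of the paper (\Cref{Section:Finding-Paving-Orderings,Section:main-theorem}): the Triangle Reshuffling Lemma (\Cref{Lemma:Reshuffling}), the Fuzzy Lemma (\Cref{Lemma:Fuzzy-lemma}), and the long induction of \Cref{Theorem:two-nbrs-in-both-direction}, which together show that every $\{\Delta(1,2,2), T_5, P_7^-\}$-free tournament is a paving tournament, by deleting a vertex and re-inserting it into a paving ordering after careful local reordering. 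You also miss the structural input that makes the decomposition canonical rather than ad hoc: in a $\Delta(1,2,2)$-free tournament, every copy of $T_5$ or $P_7$ is \emph{automatically} a homogeneous set, and the degree partition of every copy of $P_7^-$ is \emph{automatically} a homogeneous pair (\Cref{Lemma:homogeneous-C5}, \Cref{Lemma:no-mixed-on-C6}, \Cref{Corollary:homogeneous-C7}). The paper decomposes only at these copies, not at arbitrary homogeneous sets, and this matters for the second gap.

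The second gap is in your re-expansion step, which as stated would fail. Making a homogeneous set $M$ consecutive does not by itself prevent backedges between $M$ and the rest: you need the contracted vertex $v_M$ to be an \emph{isolated} vertex of the backedge graph of $T/M$ produced by induction, i.e., every dominator of $M$ must precede $v_M$ and every dominatee must follow it in that particular ordering. The inductive statement gives no such guarantee. Concretely, if $v_M$ comes out as an interior vertex of a monotone-path component, say with a left-neighbour $u$, then after expansion (with $\lvert M \rvert \geq 2$) every vertex of $M$ sends a backedge to $u$, producing a star at $u$ whose component is neither a monotone path nor one of $H_5, H_6, H_7$; the analogous failure occurs for homogeneous pairs. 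Repairing this is exactly the content of the paper's \Cref{Section:backedge-structure} (\Cref{Thm:initial-isolated}, via \Cref{Lemma:substitution-isolated} and \Cref{Lemma:join-backedge}): one reshuffles a paving ordering of the initial tournament so that all substitution points become isolated vertices and all join edges become isolated backedges, and those reshuffling arguments run on the facts that substitution points are nice vertices and join edges are bridges (\Cref{Lemma:nice-vertices}, \Cref{Lemma:bridge-edge}) --- facts available only because the decomposition happens at copies of basic tournaments, which your plan does not ensure.
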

Note that not every tournament as in Theorem \ref{Thm:main-backedge-intro} is $\Delta(1, 2, 2)$-free; for example, the ordered graph with ordering $(v_1, \dots, v_5)$ and set of edges $\{v_1v_3, v_2v_4, v_3v_5\}$ is a backedge graph of $\Delta(1,2,2).$

One might ask why we chose to use the backedge graph to describe the structure, since it may be difficult to get intuition from it. 
The reason is that this formulation represents the ``strongest" version of our main theorem, in the sense that it can be directly translated to a decomposition scheme of, or even a construction for all $\Delta(1, 2, 2)$-free tournaments.

Let us translate~\Cref{Thm:main-backedge-intro} into a decomposition scheme of $\Delta(1, 2, 2)$-free tournaments.
The crucial observation is the following: If a set $X$ of vertices induces a consecutive component in the backedge graph of a tournament $T$, then each vertex not in $X$ cannot have both an in-neighbour and an out-neighbour in $X$.
Similarly, if $(A_1, A_2)$ is a pair of flocks of a component isomorphic to~$H_6$, then each vertex not in $A_1 \cup A_2$ cannot have both an in-neighbour and an out-neighbour in each flock.
This means that $X$ and $(A_1, A_2)$ ``act like" a vertex and an edge in~$T$, respectively, and we can decompose $T$ into two subtournaments by using this information. Let us formalize this idea: Let~$T$ be a tournament. 
For two nonempty disjoint sets $X, Y \subseteq V(T)$, we write $X \Rightarrow Y$ if $xy \in E(T)$ for all $x \in X$ and $y \in Y$.
We also write $v \Rightarrow Y$ for $\{v\} \Rightarrow Y$ and $X \Rightarrow v$ for $X \Rightarrow \{v\}$.
A nonempty set~$X \subseteq V(T)$ is a \emph{homogeneous set} if each $v \in V(T) \setminus X$ satisfies $v \Rightarrow X$ or $X \Rightarrow v$. A vertex $v\in V(T)\setminus X$ is \emph{mixed} on~$X$ if neither $v\Rightarrow X$ nor $X\Rightarrow v$ holds. 
For two nonempty disjoint sets~$A_1, A_2 \subseteq V(T)$, the pair~$(A_1, A_2)$ is a \emph{homogeneous pair} if $A_1$ is a homogeneous set of $T \setminus A_2$ and~$A_2$ is a homogeneous set of~$T \setminus A_1$. 

Let $T_5$ be the tournament with vertex set $\{v_1, \ldots, v_5\}$ such that $v_i v_j \in E(T_5)$ if and only if $j-i$ is congruent to $1$ or $2$ modulo $5$.
Let $P_7$ be the Paley tournament on seven vertices, that is, the vertex set is $\{v_1, \ldots, v_7\}$ and $v_i v_j \in E(P_7)$ if and only if $j-i$ is congruent to $1$, $2$, or $4$ modulo $7$.
Let $P_7^-$ be the tournament obtained from $P_7$ by deleting a vertex.
Since $P_7$ is vertex-transitive, the chosen vertex does not matter.
We call the tournaments $T_5, P_7^-, P_7$ the \emph{basic tournaments}.
We remark that the ordered graphs $H_5, H_6, H_7$ are backedge graphs of $T_5$, $P_7^-$, and $P_7$, respectively, under appropriate orderings.

Observe that there is an ordering $\eta = (u_1, \ldots, u_6)$ of $P_7^-$ such that the backedges under this ordering are $u_1 u_3, u_1 u_6, u_2 u_5, u_3 u_4, u_4 u_6$; see Figure \ref{Figure:P_7^-}.
In particular, when an ordering $\theta$ of $P_7^-$ leads to a backedge graph isomorphic to $B_\eta(P_7^-)$, we say $\theta$ is a \emph{canonical ordering} of $P_7^-$.
The \emph{degree partition} of $P_7^-$ is the partition $(D_1, D_2)$ of $V(P_7^-)$ such that $D_1 = \{v : d^+(v)=3 \}$ and $D_2 = \{v : d^+(v) = 2\}$.
Note that if $\theta = (w_1, \ldots, w_6)$ is a canonical ordering of $P_7^-$, then $(\{w_1, w_2, w_3\}, \{w_4, w_5, w_6\})$ is the degree partition of the tournament.

Now, we can translate~\Cref{Thm:main-backedge-intro} as follows.

\begin{figure}[t]
    \begin{center}	
    \captionsetup{justification=centering}
        \begin{tikzpicture}[scale=0.75]
            \tikzset{vertex/.style = {shape=circle, fill=black, draw,minimum size=5pt, inner sep=0pt}}
            \tikzset{arc/.style = {->,thick, > = stealth}}
            \tikzset{edge/.style = {thick}}
                             
            \foreach \i in {1,2,...,6}{
                  \node[vertex] (\i) at ({90+360/6*(\i-1)}:2) {};
                 }
       		\foreach \i in {1,2,...,6}{
                  \node at ({90+360/6*(\i-1)}:2.5) {$u_{\i}$};
                 }

            \node at (0, -3) {$P_7^-$};
            
            \draw[arc] (3) to (1);
            \draw[arc] (6) to (1);
            \draw[arc] (5) to (2);
            \draw[arc] (4) to (3);
            \draw[arc] (6) to (4);

            \draw[arc] (1) to (2);
            \draw[arc] (1) to (4);
            \draw[arc] (1) to (5);
            \draw[arc] (2) to (3);
            \draw[arc] (2) to (4);
            \draw[arc] (2) to (6);
            \draw[arc] (3) to (5);
            \draw[arc] (3) to (6);
            \draw[arc] (4) to (5);
            \draw[arc] (5) to (6);
        \end{tikzpicture} \quad \quad \quad
        \begin{tikzpicture}[scale=0.75]
            \tikzset{vertex/.style = {shape=circle, fill=black, draw,minimum size=5pt, inner sep=0pt}}
            \tikzset{arc/.style = {->,thick, > = stealth}}
            \tikzset{edge/.style = {thick}}
                             
            \foreach \i in {1,2,...,6}{
                  \node[vertex] (\i) at (\i-3, -1) {};
                 }
       		\foreach \i in {1,2,...,6}{
                  \node at (\i-3, -1.5) {$u_{\i}$};
                 }

            \node at (0, -3) {$B_\eta(P_7^-)$};

            \draw[edge, bend left=60] (1) to (3);
            \draw[edge, bend left=60] (1) to (6);
            \draw[edge, bend left=60] (2) to (5);
            \draw[edge, bend left=60] (3) to (4);
            \draw[edge, bend left=60] (4) to (6);
        \end{tikzpicture}
    \caption{Tournament $P_7^-$ and its backedge graph with respect to the ordering $\eta = (u_1, \ldots, u_6)$. $(\{u_1, u_2, u_3\}, \{u_4, u_5, u_6\})$ is the degree partition of $P_7^-$.}
    \label{Figure:P_7^-}
    \end{center}
\end{figure}
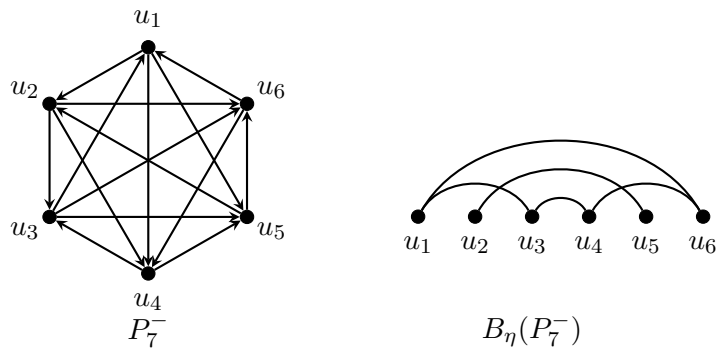 

\begin{theorem} \label{Thm:main-intro}
    Let $T$ be a $\Delta(1, 2, 2)$-free tournament.
    Then one of the following holds:
    \begin{enumerate}
        \item $T$ is isomorphic to a basic tournament;
        \item $T$ is obtained from a transitive tournament by reversing edges in vertex-disjoint directed paths;
        \item $T$ contains a homogeneous set $X$ such that $T[X]$ is isomorphic to $T_5$ or $P_7$; or \label{item:homogeneous-set}
        \item $T$ contains a homogeneous pair $(D_1, D_2)$ such that $G[D_1 \cup D_2]$ is isomorphic to $P_7^-$ and $(D_1, D_2)$ is the degree partition of this subtournament. \label{item:homogeneous-pair}
    \end{enumerate}
\end{theorem}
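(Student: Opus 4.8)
The plan is to derive Theorem~\ref{Thm:main-intro} directly from Theorem~\ref{Thm:main-backedge-intro} by reading off the structure of $T$ from the structure of one of its backedge graphs. First I would fix an ordering $\sigma = (v_1,\dots,v_n)$ of $T$ for which the backedge graph $B = B_\sigma(T)$ has the form guaranteed by Theorem~\ref{Thm:main-backedge-intro}: every component is a monotone path or is isomorphic to one of $H_5, H_6, H_7$, the $H_5$- and $H_7$-components are consecutive, and the flocks of each $H_6$-component are consecutive. The whole argument then reduces to a dictionary translating each type of component of $B$ into a structural feature of $T$, followed by a short case analysis.

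The two entries of the dictionary are as follows. A monotone path $v_{i(1)} \cdots v_{i(\ell)}$ of $B$ with $i(1) < \cdots < i(\ell)$ consists of backedges $v_{i(j)} v_{i(j+1)}$, each of which is the edge $v_{i(j+1)} \to v_{i(j)}$ of $T$; thus it records precisely the directed path $v_{i(\ell)} \to \cdots \to v_{i(1)}$ whose edges are reversed relative to the transitive tournament determined by $\sigma$, and distinct components give vertex-disjoint such paths. For a consecutive component on vertex set $X$, I would verify the crucial observation stated in the text: since $X$ occupies an interval of $\sigma$ and no backedge joins $X$ to its complement, every $w \notin X$ lies entirely before or entirely after $X$ in $\sigma$, hence satisfies $w \Rightarrow X$ or $X \Rightarrow w$, so $X$ is a homogeneous set. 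The same interval argument applied separately to each consecutive flock $A_1, A_2$ of an $H_6$-component shows that no vertex outside $A_1 \cup A_2$ is mixed on $A_1$ or on $A_2$, so $(A_1, A_2)$ is a homogeneous pair. In each case the backedges internal to $X$ (respectively to $A_1 \cup A_2$) are exactly the edges of the component, so $T[X]$ (respectively $T[A_1 \cup A_2]$) is the tournament whose backedge graph under the induced ordering is $H_5$, $H_6$, or $H_7$; by the identifications recorded in the text, these induced subtournaments are isomorphic to $T_5$, $P_7^-$, $P_7$ respectively.

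With the dictionary in hand the case analysis is immediate. If every component of $B$ is a monotone path, then $T$ is obtained from a transitive tournament by reversing edges along vertex-disjoint directed paths, which is outcome~(b). Otherwise, choose a component isomorphic to $H_5$, $H_6$, or $H_7$. If it is $H_5$ or $H_7$, the dictionary yields a homogeneous set $X$ with $T[X] \cong T_5$ or $T[X] \cong P_7$; when $X = V(T)$ the tournament $T$ is basic and we are in outcome~(a), and when $X \subsetneq V(T)$ we are in outcome~(c). If it is $H_6$, we obtain a homogeneous pair $(A_1, A_2)$ with $T[A_1 \cup A_2] \cong P_7^-$; when $A_1 \cup A_2 = V(T)$ we are in outcome~(a), and otherwise in outcome~(d).

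The main obstacle, and essentially the only nonroutine point, is making the identifications of the dictionary airtight. Two things must be checked by hand. First, one must confirm that the ordered graphs $H_5, H_6, H_7$ really are backedge graphs of $T_5, P_7^-, P_7$, and in particular that an induced subtournament whose backedge graph is one of them is isomorphic to the corresponding basic tournament; this is a finite verification, since an ordered backedge graph determines its tournament uniquely. Second, for outcome~(d) one must check that the two flocks $(A_1, A_2)$ of an $H_6$-component coincide with the degree partition of $P_7^-$, that is, that the three vertices of one flock have out-degree $3$ and those of the other have out-degree $2$ inside $T[A_1 \cup A_2]$; this follows from computing out-degrees in the explicit ordering defining $H_6$. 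A minor subtlety to handle cleanly is that Theorem~\ref{Thm:main-backedge-intro} guarantees each flock is consecutive but not that the two flocks are jointly consecutive, so the homogeneous-pair argument must treat a vertex $w$ lying between $A_1$ and $A_2$ in $\sigma$; such a $w$ satisfies $A_1 \Rightarrow w$ and $w \Rightarrow A_2$ and is mixed on neither flock, which is exactly what is required.
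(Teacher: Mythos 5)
Your ``dictionary'' is correct as a mathematical implication: a consecutive component with no backedges leaving it is a homogeneous set, consecutive flocks of an $H_6$-component give a homogeneous pair (and you correctly handle the subtlety that the two flocks need not be jointly consecutive), and the finite verifications identifying the tournaments with backedge graphs $H_5, H_6, H_7$ as $T_5, P_7^-, P_7$ and the flocks as the degree partition all check out. This translation is exactly the ``crucial observation'' the paper itself sketches in the introduction when passing from \Cref{Thm:main-backedge-intro} to \Cref{Thm:main-intro}.

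The genuine gap is that, within this paper, the argument is circular. \Cref{Thm:main-backedge-intro} is not proved independently: its proof (\Cref{Section:backedge-structure}, via \Cref{Thm:initial-isolated}) rests on the notion of an \emph{initial tournament}, which is well-defined only once one already knows that every $\Delta(1,2,2)$-free tournament is obtained from a paving tournament by substitutions of basic tournaments and $P_7^-$-joins --- that is, \Cref{Thm:main}, the constructive strengthening of \Cref{Thm:main-intro}, whose proof occupies \Cref{Section:homogeneous-sets} through \Cref{Section:main-theorem}. So you deduce the weaker statement from the stronger one, whereas the paper's logical order is the reverse. The paper's actual proof of \Cref{Thm:main-intro} has two parts, both of which your proposal bypasses: first, the homogeneous-structure analysis (\Cref{Lemma:homogeneous-C5}, \Cref{Lemma:no-mixed-on-C6}, \Cref{Corollary:homogeneous-C7}, summarized in \Cref{Thm:homogeneous-main-theorem}), which yields outcomes (a), (c), (d) whenever $T$ contains a basic tournament; and second, the substantially harder claim that every $\{\Delta(1,2,2), T_5, P_7^-\}$-free tournament is a paving tournament (the reshuffling and fuzzy lemmas of \Cref{Section:Finding-Paving-Orderings} and the case analysis of \Cref{Theorem:two-nbrs-in-both-direction}), which yields outcome (b). To turn your proposal into a genuine proof you would have to prove \Cref{Thm:main-backedge-intro} from scratch, which is at least as hard as, and in this paper is derived from, the statement you were asked to prove.
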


We emphasize that~\Cref{Thm:main-intro} also gives a construction for all $\Delta(1, 2, 2)$-free tournaments.
However, we postpone stating the formulation that describes the construction more explicitly until \Cref{Subsection:main} since the operations suggested by~\labelcref{item:homogeneous-set} and~\labelcref{item:homogeneous-pair} in~\Cref{Thm:main-intro} do not always preserve being~$\Delta(1, 2, 2)$-free.

As an application of~\Cref{Thm:main-intro}, we determine tight bounds of several tournament parameters for~$\Delta(1, 2, 2)$-free tournaments.
For an integer $k \geq 1$, a \emph{$k$-colouring} of a tournament $T$ is a map~$f : V(T) \to [k]$ such that $\{v : f(v) = i\}$ induces a transitive subtournament for each $i \in [k]$.
A tournament~$T$ is \emph{$k$-colourable} if it admits a $k$-colouring, and the \emph{chromatic number} of $T$, denoted by~$\chi(T)$, is the smallest integer $k$ such that $T$ is $k$-colourable.

The chromatic number of tournaments has been widely studied due to its connection with the celebrated Erd\H{o}s-Hajnal conjecture~\cite{EH89} and its equivalent formulation in terms of tournaments~\cite{APS01}.
In their seminal paper~\cite{BCCFLSST13}, Berger, Chudnovsky, Choromanski, Fox, Loebl, Scott, Seymour, and Thomass\'{e} characterized \emph{heroes}, which are tournaments $S$ such that $S$-free tournaments have bounded chromatic number.
Their result implies that $\Delta(1, 2, 2)$ is a hero, but the exact upper bound for the chromatic number of $\Delta(1, 2, 2)$-free tournaments was not previously known.

\begin{theorem} \label{Thm:colouring-intro}
    Every $\Delta(1, 2, 2)$-free tournament is $3$-colourable, and it is $2$-colourable if and only if it is $P_7$-free.
\end{theorem}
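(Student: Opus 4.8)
The plan is to read the colouring bound straight off the backedge-graph description in~\Cref{Thm:main-backedge-intro}, using one elementary observation about orderings. For any ordering $\sigma$ of a tournament $T$, an independent set in the graph $B_\sigma(T)$ contains no backedge, so every edge it spans points forward and it induces a transitive subtournament. Hence a proper colouring of the graph $B_\sigma(T)$ with $c$ colours partitions $V(T)$ into $c$ transitive subtournaments, giving
\[
\chi(T) \le \chi\bigl(B_\sigma(T)\bigr),
\]
where the right-hand side is the \emph{ordinary} chromatic number of the graph $B_\sigma(T)$. I would apply this with the ordering $\sigma$ supplied by~\Cref{Thm:main-backedge-intro}. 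Since the chromatic number of a graph is the maximum over its components, it then suffices to bound $\chi$ for a monotone path and for each of $H_5,H_6,H_7$.

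The second step is to carry out these component computations. A monotone path is a path as an abstract graph, and one checks by inspection that $H_5$ and $H_6$ are bipartite (for instance $\{v_1,v_3,v_4\}$, $\{v_2,v_5,v_6\}$ is a bipartition of $H_6$); each of these components is therefore $2$-chromatic. Deleting $v_7$ from $H_7$ leaves $H_6$, so colouring $H_6$ with two colours and giving $v_7$ a third shows $\chi(H_7)\le 3$. Substituting into the displayed inequality yields $\chi(T)\le 3$ for every $\Ptwo$-free tournament $T$, which is the first assertion. If moreover $T$ is $P_7$-free, then no component of $B_\sigma(T)$ is isomorphic to $H_7$: since reconstruction of a tournament from an ordered backedge graph is unique and $H_7$ is a backedge graph of $P_7$, such a component would be a copy of $P_7$ in $T$. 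Thus all components are monotone paths or copies of $H_5,H_6$, each $2$-chromatic, and $\chi(T)\le 2$.

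It remains to evaluate $\chi(P_7)$ and to prove the converse. Every vertex of $P_7$ has out-degree $3$ and its out-neighbourhood induces a cyclic triangle; consequently $P_7$ has no transitive subtournament on four vertices, because the source of such a subtournament would have its three out-neighbours inducing a transitive triangle inside a cyclic one. Hence the largest transitive subtournament of $P_7$ has three vertices, so $\chi(P_7)\ge\lceil 7/3\rceil = 3$, and an explicit partition into three transitive triples gives $\chi(P_7)=3$. Finally, $\chi$ is monotone under taking subtournaments (restrict a colouring), so if $T$ contains $P_7$ then $\chi(T)\ge\chi(P_7)=3$; equivalently, $\chi(T)\le 2$ forces $T$ to be $P_7$-free. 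Together with the previous paragraph this establishes the equivalence.

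The conceptual content is concentrated in the single inequality $\chi(T)\le\chi(B_\sigma(T))$ and in the identification of an $H_7$-component with a $P_7$-subtournament; once these are in place, both halves of the theorem follow from~\Cref{Thm:main-backedge-intro} without any further case analysis. The main work, and the most error-prone part, is the bundle of small verifications in the middle step: that the monotone path and $H_5,H_6$ are bipartite, that $\chi(H_7)\le 3$, and in particular the computation $\chi(P_7)=3$ via the size of the largest transitive subtournament. These are routine but must be done carefully, since the tightness of both bounds in the statement rests entirely on them.
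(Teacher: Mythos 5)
Your proposal is correct and follows essentially the same route as the paper: the folklore inequality $\chi(T)\le\chi(B_\sigma(T))$ (the paper's Lemma~\ref{Lemma:Colouring-backedges}), the $2$-colourability of monotone paths, $H_5$, $H_6$, the $3$-colourability of $H_7$, the identification of $H_7$-components with copies of $P_7$, and $\chi(P_7)=3$; in fact you spell out two points the paper leaves implicit (the $H_7$--$P_7$ identification via unique reconstruction, and the computation $\chi(P_7)\ge 3$ via $\vec{\alpha}(P_7)=3$). One trivial slip: a $7$-vertex tournament cannot be partitioned into ``three transitive triples''---say instead two triples and a singleton, or simply note that $\chi(P_7)\le 3$ already follows from $\chi(H_7)\le 3$.
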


The other application deals with the order of a largest transitive subtournament of a tournament $T$, denoted by $\vec{\alpha}(T)$.
\Cref{Thm:colouring-intro} shows that every $\Delta(1, 2, 2)$-free tournament $T$ satisfies $\vec{\alpha}(T) \geq \lvert V(T) \rvert/3$ as it is $3$-colourable.
However, this bound is far from being optimal, and we prove the optimal lower bound by combining~\Cref{Thm:main-backedge-intro} and~\Cref{Thm:colouring-intro}.

\begin{theorem} \label{Thm:transitive-intro}
    Every $\Delta(1, 2, 2)$-free tournament $T$ with $n$ vertices satisfies $\vec{\alpha}(T) \geq \frac{3n}{7}$.
\end{theorem}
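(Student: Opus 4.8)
The plan is to read the bound off the backedge structure provided by \Cref{Thm:main-backedge-intro}, via the elementary principle that an independent set in a backedge graph is exactly a forward-ordered transitive subtournament. Concretely, I would fix an ordering $\sigma=(v_1,\dots,v_n)$ of $T$ such that every component of $B:=B_\sigma(T)$ is a monotone path or is isomorphic to one of $H_5,H_6,H_7$, as guaranteed by \Cref{Thm:main-backedge-intro}. If $I\subseteq V(T)$ is an independent set of the graph $B$, then every arc of $T$ with both endpoints in $I$ points forward under $\sigma$, so $T[I]$ is transitive; writing $\alpha(\cdot)$ for the independence number, this yields $\vec{\alpha}(T)\ge\alpha(B)$. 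It therefore suffices to prove $\alpha(B)\ge \tfrac37 n$.

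Because $\alpha$ is additive over connected components and the components of $B$ partition $V(T)$, it is enough to show $\alpha(C)\ge\tfrac37\lvert C\rvert$ for each component $C$, which I would check by cases. If $C$ is a monotone path on $k$ vertices then $C$ is an induced $P_k$, so $\alpha(C)=\lceil k/2\rceil\ge\tfrac{k}{2}\ge\tfrac37 k$. If $C$ is isomorphic to one of $H_5,H_6,H_7$, then $C$ has $5$, $6$, or $7$ vertices, and a direct check on the explicit edge sets gives $\alpha(H_5),\alpha(H_6),\alpha(H_7)\ge 3$ (for instance $\{v_1,v_2,v_4\}$ is independent in $H_5$ and $\{v_1,v_4,v_5\}$ is independent in $H_6$), yielding ratios $\tfrac35,\tfrac12,\tfrac37$, each at least $\tfrac37$. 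Summing over components, $\alpha(B)=\sum_C\alpha(C)\ge\tfrac37\sum_C\lvert C\rvert=\tfrac37 n$, and hence $\vec{\alpha}(T)\ge\tfrac{3n}{7}$.

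Two points merit care in the write-up. First, the additive combination is legitimate even though a component isomorphic to $H_6$ need not be consecutive in $\sigma$: there are no edges of $B$ between distinct components, so a union of independent sets, one chosen in each component, is again independent in $B$; equivalently, all arcs of $T$ joining different components point forward, and transitivity of $T[I]$ depends only on the arcs within $I$. Second, the ratio $\tfrac37$ is attained only at the $H_7$ components, and this is where \Cref{Thm:colouring-intro} clarifies the picture. Since $H_7$ is a backedge graph of $P_7$, the worst component ratio equals $\vec{\alpha}(P_7)/\lvert V(P_7)\rvert=3/7$; and if $T$ is $P_7$-free then $B$ has no $H_7$-component and, by \Cref{Thm:colouring-intro}, $\chi(T)=2$, so the larger of the two transitive colour classes already gives $\vec{\alpha}(T)\ge n/2>\tfrac{3n}{7}$. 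Thus the extremal ratio is forced by the presence of $P_7$, and a forward join of copies of $P_7$ (whose strong components are the individual copies, so that no copy of the strongly connected $\Delta(1,2,2)$ can appear) shows $\tfrac{3n}{7}$ is best possible whenever $7\mid n$.

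Given \Cref{Thm:main-backedge-intro}, the remaining argument is short, so the genuine difficulty lies entirely in that structure theorem; what is left is a finite independence computation on three fixed graphs together with the component bookkeeping above. Among these the $H_7$ (equivalently $P_7$) case is the delicate one, since it alone carries the tight ratio and therefore simultaneously determines the structural lower bound and, via $\vec{\alpha}(P_7)=3$, its optimality: one must verify $\alpha(H_7)\ge 3$ and recall that no better per-component ratio than $3/7$ is available. Everything else reduces to the elementary inequalities of the second paragraph.
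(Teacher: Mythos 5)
Your proposal is correct and follows essentially the same route as the paper: apply \Cref{Thm:main-backedge-intro}, use the fact that a stable set in the backedge graph induces a transitive subtournament, and bound the independence number component by component, with the $H_7$ components (where $\alpha(H_7)=3$) forcing the ratio $\tfrac{3}{7}$. The only cosmetic difference is that the paper packages the non-$H_7$ components via the proper $2$-colouring from \Cref{Thm:colouring-intro} (giving a stable set of size $\tfrac{1}{2}(n-7t)$ plus $3t$ from the $H_7$ components), whereas you compute $\alpha(C)\ge\tfrac{3}{7}\lvert C\rvert$ for each component directly; the arithmetic is identical.
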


The last application is related to the \emph{triangle packing}.
A triangle packing in a tournament $T$ is a set of vertex-disjoint cyclic triangles in $T$, and we denote by $\nu(T)$ the maximum size of a triangle packing in $T$.
This parameter naturally arises in the sense of duality, since it is a dual parameter of the minimum size of the feedback vertex set in tournaments~\cite{BBT17}.

\begin{theorem}[Informal; see~\Cref{Thm:triangle-packing} for the formal statement] \label{Thm:triangle-packing-intro}
    Let $T$ be a $\Ptwo$-free tournament and let $\sigma$ be an ordering of $T$ ``that witnesses the construction of $T$."
    If $B_\sigma(T)$ has $m$ edges, then $\nu(T)\ge\frac{2m}{7}$. 
\end{theorem}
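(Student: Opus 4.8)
The plan is to combine the structural decomposition of \Cref{Thm:main-intro} with a piece-by-piece construction of a triangle packing, carried out in the ordering $\sigma$ that witnesses the construction, so that each basic block occupies a consecutive interval and is represented by the backedges prescribed by the construction; in particular a copy of $P_7$ contributes exactly $7$ backedges. First I would record the elementary dictionary between cyclic triangles and $B_\sigma(T)$: for positions $a<b<c$, the triple $\{v_a,v_b,v_c\}$ induces a cyclic triangle in $T$ precisely when either $v_av_c\in E(B_\sigma(T))$ while $v_av_b,v_bv_c\notin E(B_\sigma(T))$ (a backedge whose interval contains a private interior vertex $v_b$), or $v_av_b,v_bv_c\in E(B_\sigma(T))$ while $v_av_c\notin E(B_\sigma(T))$ (a monotone path of length two). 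This turns the problem into one of finding many pairwise vertex-disjoint configurations of these two types in an ordered graph whose pieces are under control.

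Next I would exhibit, for each piece of the decomposition, an internal packing meeting the target ratio. For the basic blocks a direct inspection gives $\nu(T_5)=1$, $\nu(P_7^-)=2$, and $\nu(P_7)=2$; comparing these with the numbers of backedges the blocks contribute ($3$, at most $7$, and $7$, respectively) verifies that each block yields at least $\tfrac27$ of its backedges' worth of vertex-disjoint cyclic triangles, with equality exactly at $P_7$ — this is the origin both of the constant $\tfrac27$ and of the tightness of the bound. For a reversed directed path on $\ell+1$ vertices ($\ell$ backedges), type-(ii) configurations on disjoint consecutive triples give $\lfloor (\ell+1)/3 \rfloor$ triangles, whose ratio to $\ell$ is $\tfrac12$ when $\ell=2$ and tends to $\tfrac13$, comfortably above $\tfrac27$; an isolated reversed edge ($\ell=1$) is handled by a single type-(i) triangle using an interior vertex of its span.

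The main obstacle is making all of these local packings simultaneously vertex-disjoint. Since each homogeneous set, and each flock of a homogeneous pair, occupies a consecutive interval, distinct blocks live on disjoint intervals and their internal triangles never clash; the genuine difficulty is twofold. First, the backedges joining a block to vertices outside it appear in bundles (one backedge per block vertex), and the triangles extracted from such a bundle compete with the block's internal packing for the block's own vertices and for the interior vertices of the bundle's span. Second, the type-(i) triangles needed for isolated reversed edges each borrow an interior vertex, and nested or overlapping such edges contend for the same vertices. I would resolve both by a greedy sweep of the ordering from left to right, assigning to each backedge or bundle the leftmost still-unused admissible interior vertex, and I would lean on the fact that every configuration except the internal packing of $P_7$ produces triangles at a ratio strictly above $\tfrac27$; this slack lets any interior vertex occasionally surrendered to a neighbour be absorbed without dropping below the target.

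Finally, summing the disjoint packings over all pieces yields at least $\tfrac27$ of the total number of backedges, i.e. $\nu(T)\ge\tfrac{2m}{7}$, and the copy of $P_7$ (seven backedges, two triangles) shows the constant cannot be improved. I expect the hardest part to be the bookkeeping at the interfaces, in particular the homogeneous pair inducing $P_7^-$, whose two internal triangles already exhaust all six of its vertices, so that any triangle of the contracted skeleton running through the contracted edge must be rerouted through the pair's structure without colliding with the internal packing. Verifying that the left-to-right assignment can always be carried out there, and that it never simultaneously overdraws interior vertices, is the crux of the argument.
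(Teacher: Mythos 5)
Your high-level skeleton (decompose via the structure theorem, pack each piece, sum) matches the paper's strategy, but the execution has a genuine arithmetic flaw that breaks the key premise of your greedy argument. You claim that monotone-path components internally produce triangles at a ratio ``comfortably above $\tfrac27$'': this is false. A monotone path with $\ell$ backedges ($\ell+1$ vertices) yields only $\lfloor (\ell+1)/3 \rfloor$ pairwise disjoint type-(ii) triangles, which at $\ell=4$ is $1$, i.e.\ ratio $\tfrac14 < \tfrac27$ (and the required bound $\tfrac{2\cdot 4}{7}=\tfrac87$ forces \emph{two} triangles); at $\ell=7$ the ratio is exactly $\tfrac27$ with no slack, and at $\ell=1$ it is $0$. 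So path components cannot be handled by internal packings at all: they must borrow interior vertices of their backedge spans via type-(i) triangles, and those interior vertices belong to \emph{other} components. This cross-component borrowing is precisely the content of the bound for paving tournaments, and your left-to-right greedy sweep is asserted, not proved, to survive it (nested and bundled backedges competing for the same interior vertices is exactly where it would need a real argument). The paper avoids this entirely: its \Cref{Lemma:triangle-packing-paving} proves the paving base case by induction on the number of vertices, at each step deleting one cyclic triangle together with at most $3$ backedges (using $\Ptwo$-freeness to rule out a fourth lost edge), giving $\nu \geq m/3 > 2m/7$ for the whole paving part at once rather than component by component.

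The second gap is the one you yourself flag as the crux and then leave open: the interface at a $P_7^-$-join, where the two internal triangles exhaust all six vertices of the copy and the skeleton's triangles through the contracted edge must be rerouted. The paper resolves this not by rerouting but by induction on the number of operations (\Cref{Thm:triangle-packing}): it peels off the last operation, and for a $P_7^-$-join it deletes the $6$-vertex copy $X$, repairs the ordering of $T \setminus X$ (the deletion can merge at most two pairs of formerly separated backedge endpoints, costing at most one backedge each, hence at most $5+2=7$ backedges lost in total against the canonical ordering of $P_7^-$), and gains the two disjoint triangles inside $X$, so the increment $2 \geq \tfrac27\cdot 7$ closes the induction exactly. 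Without this (or an equivalent) quantitative accounting at the interfaces, and with the path ratio corrected, your proposal does not yield the constant $\tfrac27$; as written it establishes the bound only for tournaments whose backedge components are single blocks and short paths.
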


We give the proof of~\Cref{Thm:colouring-intro},~\Cref{Thm:transitive-intro}, and~\Cref{Thm:triangle-packing-intro} in Section~\ref{Section:Application}.

\subsection{Previous results on \texorpdfstring{$S$}{S}-free tournaments}

When a tournament $S$ has at most three vertices, the structure of $S$-free tournaments is well-understood.
For example, if $S$ is a cyclic triangle, then $S$-free tournaments are exactly the transitive tournaments; if $S$ is the transitive tournament with three vertices, then an $S$-free tournament either has at most two vertices or is isomorphic to a cyclic triangle.
However, even when $S$ has four vertices, the structure of $S$-free tournaments becomes nontrivial.
The simplest example would be the (unique) strongly connected tournament with four vertices, say $SC_4$.
Since every strongly connected tournament with at least four vertices contains $SC_4$ by Moon's Theorem~\cite{Moon71}, any strongly connected component of $SC_4$-free tournaments has at most three vertices.

However, except for these simple examples, there are only three tournaments $S$, up to reversing all edges, such that the complete structure of $S$-free tournaments is known.
To state these results, we need to define some terminology.
A homogeneous set $X$ of a tournament $T$ is \emph{trivial} if $\lvert X \rvert = 1$ or~$X = V(T)$, and \emph{nontrivial} otherwise.
A tournament is \emph{prime} if it has no nontrivial homogeneous sets.

Why do we care about prime tournaments?
If a tournament $T$ has a nontrivial homogeneous set, then we can obtain $T$ from two smaller tournaments, both of which are subtournaments of $T$, by the substitution operation (see \Cref{Section:Substitutions} for the definition of substitution).
In particular, when $S$ is a prime tournament, then $T$ is $S$-free if and only if both of these subtournaments are $S$-free.
Thus, if we know the complete structure of prime $S$-free tournaments, we can construct all $S$-free tournaments by substitution.

For each odd integer $n \geq 1$, define three tournaments $T_n$, $U_n$, and $W_n$ as follows  (see~\Cref{Fig:Simple-prime-tournaments} for an illustration).
Let $k = (n-1)/2$.
\begin{itemize}
    \item $T_n$ is a tournament with vertices $v_1, \ldots, v_n$ and $v_i v_j \in E(T_n)$ if and only if $j-i$ is congruent to $1, \ldots, k$ modulo $n$;
    \item $U_n$ is obtained from $T_n$ by reversing all edges with both ends in $\{v_2, \ldots, v_{k+1}\}$; and
    \item $W_n$ is a tournament with vertices $v_1, \ldots, v_{n-1}, w$ such that $v_i v_j \in E(W_n)$ for each $1 \leq  i < j \leq n-1$ and $\{v_i : \text{$i$ is even}\} \Rightarrow w \Rightarrow \{v_i : \text{$v_i$ is odd} \}$.
\end{itemize}
In particular, each tournament $T_n, U_n, W_n$ is a one-vertex tournament when $n=1$, a cyclic triangle when $n=3$, and a prime tournament for every odd integer $n \geq 1$.

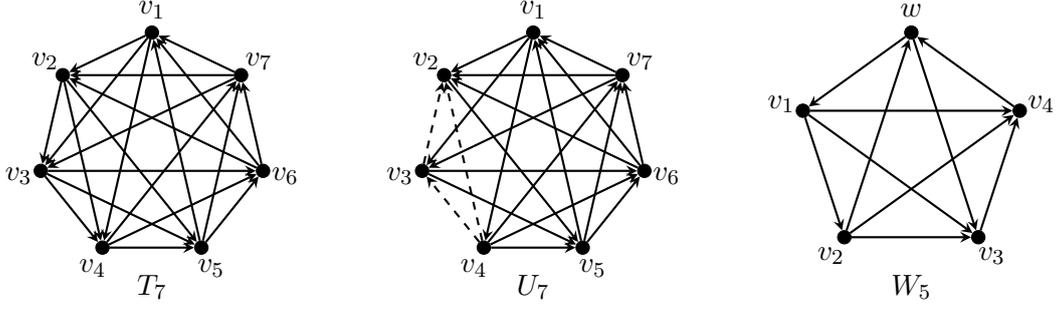
\begin{figure}[t]
    \centering
        \begin{tikzpicture}[scale=0.75]
            \tikzset{vertex/.style = {shape=circle, fill=black, draw,minimum size=5pt, inner sep=0pt}}
            \tikzset{arc/.style = {->,thick, > = stealth}}
            \tikzset{edge/.style = {thick}}
                             
               \foreach \i in {1,2,...,7}{
              \node[vertex] (\i) at ({90+360/7*(\i-1)}:2) {};
             }
             
               \foreach \i in {1,2,...,7}{
              \node at ({90+360/7*(\i-1)}:2.4) {$v_{\i}$};
             }

            \node at (0, -2.5) {$T_7$};
             
            \draw[arc] (1) to (2);
            \draw[arc] (1) to (3);
            \draw[arc] (1) to (4);
            \draw[arc] (2) to (3);
            \draw[arc] (2) to (4);
            \draw[arc] (2) to (5);
            \draw[arc] (3) to (4);
            \draw[arc] (3) to (5);
            \draw[arc] (3) to (6);
            \draw[arc] (4) to (5);
            \draw[arc] (4) to (6);
            \draw[arc] (4) to (7);
            \draw[arc] (5) to (6);
            \draw[arc] (5) to (7);
            \draw[arc] (5) to (1);
            \draw[arc] (6) to (7);
            \draw[arc] (6) to (1);
            \draw[arc] (6) to (2);
            \draw[arc] (7) to (1);
            \draw[arc] (7) to (2);
            \draw[arc] (7) to (3);
        \end{tikzpicture}  \quad  \quad           
        \begin{tikzpicture}[scale=0.75]
            \tikzset{vertex/.style = {shape=circle, fill=black, draw,minimum size=5pt, inner sep=0pt}}
            \tikzset{arc/.style = {->,thick, > = stealth}}
            \tikzset{edge/.style = {thick}}
                             
               \foreach \i in {1,2,...,7}{
              \node[vertex] (\i) at ({90+360/7*(\i-1)}:2) {};
             }
             
               \foreach \i in {1,2,...,7}{
              \node at ({90+360/7*(\i-1)}:2.4) {$v_{\i}$};
             }
             
            \node at (0, -2.5) {$U_7$};
             
            \draw[arc] (1) to (2);
            \draw[arc] (1) to (3);
            \draw[arc] (1) to (4);
            \draw[arc, dashed] (3) to (2);
            \draw[arc, dashed] (4) to (2);
            \draw[arc] (2) to (5);
            \draw[arc, dashed] (4) to (3);
            \draw[arc] (3) to (5);
            \draw[arc] (3) to (6);
            \draw[arc] (4) to (5);
            \draw[arc] (4) to (6);
            \draw[arc] (4) to (7);
            \draw[arc] (5) to (6);
            \draw[arc] (5) to (7);
            \draw[arc] (5) to (1);
            \draw[arc] (6) to (7);
            \draw[arc] (6) to (1);
            \draw[arc] (6) to (2);
            \draw[arc] (7) to (1);
            \draw[arc] (7) to (2);
            \draw[arc] (7) to (3);
        \end{tikzpicture} \quad \quad            
        \begin{tikzpicture}[scale=0.75]
            \tikzset{vertex/.style = {shape=circle, fill=black, draw,minimum size=5pt, inner sep=0pt}}
            \tikzset{arc/.style = {->,thick, > = stealth}}
            \tikzset{edge/.style = {thick}}
                             
               \foreach \i in {1,2,...,5}{
              \node[vertex] (\i) at ({90+360/5*(\i-1)}:2) {};
             }

            \node at ({90+360/5*(2-1)}:2.4) {$v_{1}$};
            \node at ({90+360/5*(3-1)}:2.4) {$v_{2}$};
            \node at ({90+360/5*(4-1)}:2.4) {$v_{3}$};
            \node at ({90+360/5*(5-1)}:2.4) {$v_{4}$};
            \node at ({90+360/5*(1-1)}:2.4) {$w$};
             
            \node at (0, -2.5) {$W_5$};

            \draw[arc] (2) to (3);
            \draw[arc] (2) to (4);
            \draw[arc] (2) to (5);
            \draw[arc] (3) to (4);
            \draw[arc] (3) to (5);
            \draw[arc] (4) to (5);
            \draw[arc] (5) to (1);
            \draw[arc] (3) to (1);
            \draw[arc] (1) to (2);
            \draw[arc] (1) to (4);
        \end{tikzpicture} 
    \caption{Tournaments $T_7$, $U_7$, and $W_5$. Dashed edges indicated the ones reversed to obtain $U_7$ from $T_7$.}
    \label{Fig:Simple-prime-tournaments}
\end{figure} 

The following describes the structure of $W_5$-free tournaments.

\begin{theorem}[Latka~\cite{Latka03}]
    A prime tournament with at least three vertices is $W_5$-free if and only it is isomorphic to one of $P_7^-$, $P_7$, $T_n$, or $U_n$ for some odd integer $n \geq 3$.
\end{theorem}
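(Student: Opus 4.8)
The plan is to prove both implications, treating the ``if'' direction as a verification and spending the real effort on the ``only if'' direction, which I would run by induction on the number of vertices.

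For the ``if'' direction I would check that each of $T_n$, $U_n$, $P_7$, $P_7^-$ is both prime and $W_5$-free. Primality of $T_n$ and $U_n$ follows from their cyclic structure: if $X$ is a homogeneous set with $1<|X|<n$, then for distinct $v_i,v_j\in X$ the set of vertices dominating $v_i$ is a length-$k$ arc, and for $v_j$ it is the same arc shifted by $j-i$; these arcs differ, so some vertex outside $X$ distinguishes $v_i$ from $v_j$ unless the whole symmetric difference lies in $X$, and choosing $v_i,v_j$ extremal inside $X$ rules this out (for $U_n$ one additionally tracks that a homogeneous set cannot straddle the reversed block $\{v_2,\dots,v_{k+1}\}$). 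Primality of $P_7$ and $P_7^-$ is a finite check, aided by the vertex- and arc-transitivity of the Paley tournament. The cleanest route to $W_5$-freeness of $T_n$ is to note that $T_n$ is \emph{locally transitive}, meaning every out-neighbourhood $N^+(v)$ and in-neighbourhood $N^-(v)$ induces a transitive subtournament (here each is a consecutive arc of the circular order). Local transitivity is hereditary, since any $N^+_S(v)$ is a subset of the transitive set $N^+_T(v)$; and $W_5$ is \emph{not} locally transitive, because the out-neighbourhood of $v_2$ in $W_5$ is the cyclic triangle $\{v_3,v_4,w\}$. Hence no locally transitive tournament, in particular $T_n$, contains $W_5$. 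For $U_n$, $P_7$, $P_7^-$, which are not locally transitive, I would verify $W_5$-freeness directly, using that $W_5$ is self-dual (reversing all its edges and relabelling $u_i=v_{5-i}$ returns $W_5$) to halve the casework.

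For the ``only if'' direction, let $T$ be prime and $W_5$-free with $n\ge 3$ vertices. I would first dispose of the small cases $n\le 7$ by a structured finite analysis: these are exactly where the sporadic tournaments $P_7$ and $P_7^-$ appear, together with the initial members $T_3=U_3$, $T_5$, $U_5$ of the families; recall that there is no prime tournament on four vertices, so $n=4$ contributes nothing. For $n\ge 8$ I would induct, the engine being the classical theorem of Schmerl and Trotter that a prime tournament on $N\ge 7$ vertices contains a prime subtournament on $N-2$ vertices. Deleting two vertices to obtain such a $T'$, which remains $W_5$-free, the induction hypothesis identifies $T'$; since $|V(T')|=n-2\ge 6$, the only possibilities for large $n$ are $T_{n-2}$, $U_{n-2}$, and (when $n-2=6$) $P_7^-$. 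For even $n\ge 8$ the subtournament $T'$ would be an even-order prime $W_5$-free tournament, hence $T'\cong P_7^-$ and $n=8$, so here I must show that $P_7^-$ admits no prime $W_5$-free $2$-vertex extension, excluding even $n\ge 8$ entirely. For odd $n\ge 9$ the crux is a \emph{reattachment lemma}: adding two vertices back to $T_m$ (respectively $U_m$) while preserving primality and $W_5$-freeness must yield $T_{m+2}$ (respectively $U_{m+2}$).

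The reattachment analysis is the main obstacle, and it is where the bulk of the work lies. Fixing the circular labelling $v_1,\dots,v_m$ of $T'$, I would study the trace $A=N^+(x)\cap V(T')$ of a new vertex $x$. The $W_5$-free hypothesis constrains $A$ severely: for every transitively ordered $4$-set of $T'$ (a suitably placed arc in the rotational tournament) the vertex $x$ cannot be a perfect alternator, and, dually, $x$ cannot be alternated by a fourth vertex; propagating these constraints around the cycle should force $A$ to be an interval in the circular order, so that $x$ occupies a well-defined rotational position. Primality then eliminates the degenerate placements that would create a homogeneous set, and the mutual relation of the two new vertices pins the result down as $T_{m+2}$ or $U_{m+2}$. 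The delicate points I anticipate are (i) controlling copies of $W_5$ that use $x$ itself together with three old vertices and an old alternator, rather than only the copies inside $T'$; (ii) separating the $T_n$ branch from the $U_n$ branch, which requires tracking the position of the reversed block relative to the trace $A$; and (iii) confirming that, once $n\ge 8$, no sporadic examples beyond $P_7$ and $P_7^-$ survive, so that the two infinite families are exhaustive.
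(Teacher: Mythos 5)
The paper does not prove this statement at all: it is quoted as Latka's theorem and cited to the reference [Latka03], so there is no internal proof to compare against and your proposal has to be judged on its own terms. Your overall architecture is reasonable and close to how such classifications are done in the literature: the ``if'' direction via local transitivity of $T_n$ (correct: every in- and out-neighbourhood of $T_n$ is an arc, hence transitive, local transitivity is hereditary, and $W_5$ fails it because the out-neighbourhood of $v_2$ is the cyclic triangle $\{v_3,v_4,w\}$), and the ``only if'' direction by induction using the Schmerl--Trotter theorem that a prime tournament on $N\ge 7$ vertices contains a prime subtournament on $N-2$ vertices.

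There is, however, a concrete gap in the inductive step. You assert that for $n\ge 8$ the prime $W_5$-free subtournament $T'$ on $n-2$ vertices ``can only be $T_{n-2}$, $U_{n-2}$, and (when $n-2=6$) $P_7^-$.'' This is false when $n=9$: the induction hypothesis applied to a $7$-vertex prime $W_5$-free tournament also allows $T'\cong P_7$, and this branch is not subsumed by your reattachment lemma for $T_m$ and $U_m$. It cannot be waved away, because neither $T_9$ nor $U_9$ contains $P_7$ ($T_9$ is locally transitive while $P_7$ is not; and in $U_9$ the vertices $v_2$ and $v_5$ have out-degrees $1$ and $7$, so any $7$-vertex subtournament of $U_9$ either contains one of them, which then has out-degree different from $3$, or is the unique remaining $7$-set, in which $v_1$ has out-degree $2$, whereas $P_7$ is $3$-regular). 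Hence the $T'\cong P_7$ branch must terminate in a contradiction, which requires a separate two-vertex-extension analysis of $P_7$ exactly parallel to the one you plan for $P_7^-$ at $n=8$; without it the induction does not close. A secondary, sketch-level weakness: $W_5$-freeness of $U_n$ is a claim about an infinite family, so ``verify directly'' cannot mean a finite check; you need a general argument there (only $T_n$ is actually covered by a proof in your outline, with $P_7$ and $P_7^-$ finite), and of course the reattachment lemma itself, which you rightly identify as the bulk of the work, is still entirely to be carried out.
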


For two tournaments $S_1$ and $S_2$, let $S_1 \Rightarrow S_2$ be the tournament obtained from the disjoint union of $S_1$ and $S_2$ by adding edges from every vertex in $S_1$ to every vertex in $S_2$.
The next result forbids the tournament $T_1 \Rightarrow T_3$.
Observe that $T_1 \Rightarrow T_3$ is not a prime tournament, so describing only the structure of prime $(T_1 \Rightarrow T_3)$-free tournaments does not directly imply the general structure theorem.
However, since the $(T_1 \Rightarrow T_3)$-free condition implies that the out-neighbourhood of each vertex induces a transitive subtournament, one can deduce the structure theorem for $(T_1 \Rightarrow T_3)$-free tournament from the information about prime $(T_1 \Rightarrow T_3)$-free tournaments.

\begin{theorem}[Liu~\cite{Liu12}]
    A prime tournament with at least three vertices is $(T_1 \Rightarrow T_3)$-free if and only if it is isomorphic to $T_n$ for some odd integer $n \geq 3$.
    Moreover, a tournament (which is not necessarily prime) is $(T_1 \Rightarrow T_3)$-free if and only if it can be obtained from $T_n$ or $T_n \Rightarrow T_1$, where $n \geq 1$ is odd, by substituting a transitive tournament for each vertex.
\end{theorem}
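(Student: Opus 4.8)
The plan is to reduce to a purely local condition, settle the prime case, and then bootstrap to the general statement via modular decomposition. As the excerpt already records, a tournament $T$ is $(T_1 \Rightarrow T_3)$-free if and only if $N^+(v)$ induces a transitive subtournament for every $v \in V(T)$: a copy of $T_1 \Rightarrow T_3$ is exactly a vertex together with a cyclic triangle contained in its out-neighbourhood. I will use this equivalence throughout, so ``$(T_1 \Rightarrow T_3)$-free'' may be read as ``every out-neighbourhood is transitive''.

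For the prime case, let $T$ be prime with $|V(T)| \geq 3$ and every out-neighbourhood transitive; the goal is $T \cong T_n$ for some odd $n$. First, primality together with $|V(T)| \geq 3$ forces $T$ to be strongly connected, since otherwise the top strong component (or its complement) would be a nontrivial homogeneous set. The crux is to promote this one-sided local transitivity to a global circular structure: I would show that $T$ is \emph{round}, i.e.\ there is a cyclic ordering $v_0, \ldots, v_{n-1}$ with $N^+(v_i) = \{v_{i+1}, \ldots, v_{i+d^+(v_i)}\}$ (indices mod $n$) for each $i$. The natural way to produce the ordering is to iterate the map sending $v$ to the source of the transitive tournament $T[N^+(v)]$ and argue that the resulting functional graph is a single Hamiltonian cycle; along the way one checks, crucially using primality, that each in-neighbourhood is transitive as well (the non-prime tournament $T_3 \Rightarrow T_1$ shows that this step genuinely fails without primality). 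Once roundness is established, a round tournament that is not regular is easily seen to contain a nontrivial homogeneous set arising from an interval of the circular order, so primality forces all out-degrees equal to a common value $k$; then $n = 2k+1$ is odd and the consecutive-interval structure is exactly that of $T_n$. Establishing roundness from one-sided local transitivity is the main obstacle, and I expect it to be the most delicate part. For the converse I would verify directly that $T_n$ (odd $n \geq 3$) is prime and $(T_1 \Rightarrow T_3)$-free: the out-neighbourhood of $v_i$ is the arc $\{v_{i+1}, \ldots, v_{i+k}\}$, which is transitive, while primality follows from the standard observation that a nonzero shift of the connection arc $\{1, \ldots, k\}$ in $\mathbb{Z}_n$ differs from it in at least two positions, so any nontrivial homogeneous set would be forced to contain those external positions and hence grow to all of $V(T_n)$.

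For the general statement I would run modular decomposition, the key tool being the following substitution lemma. If $T$ is obtained from a base tournament $B$ by substituting transitive tournaments for its vertices, then $T$ is $(T_1 \Rightarrow T_3)$-free if and only if $B$ is: any cyclic triangle of $T$ must use three distinct modules (two vertices in one module together with an outside vertex are never a cyclic triangle) and hence projects to a cyclic triangle of $B$, and a vertex dominating such a triangle cannot lie in one of those three modules, so it projects to a copy of $T_1 \Rightarrow T_3$ in $B$. Since $T_n$ is $(T_1 \Rightarrow T_3)$-free by the prime case and $T_n \Rightarrow T_1$ is as well (the added sink merely appends a sink to each already-transitive out-neighbourhood, and $N^+$ of the sink is empty), this lemma yields the ``if'' direction at once.

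For the ``only if'' direction, take the quotient $Q$ of $T$ by its maximal proper modules; $Q$ is either prime on at least three vertices or a transitive tournament. If $Q$ is prime, then $Q \cong T_n$ by the prime case, and because every vertex of $T_n$ has an in-neighbour, any module containing a cyclic triangle would be dominated by a vertex of an in-neighbour module, producing $T_1 \Rightarrow T_3$; hence all modules are transitive and $T$ is a transitive substitution into $T_n$. If $Q$ is transitive, write $T = M_1 \Rightarrow M'$, where $M'$ is the union of the remaining modules in their transitive order; the same domination argument makes $M_2, \ldots, M_m$, and hence $M'$, transitive, while $M_1$ is a strictly smaller $(T_1 \Rightarrow T_3)$-free tournament. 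Applying induction to $M_1$ and folding the trailing transitive part $M'$ into the $T_1$-slot of $T_{n'} \Rightarrow T_1$ (absorbing any transitive tail already produced by the induction) gives $T$ in the required form. The only genuinely new content beyond this bookkeeping is the prime case, so that is where the effort should be concentrated.
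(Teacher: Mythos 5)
First, a point of comparison: the paper does not prove this statement at all --- it is quoted as a known result from Liu~\cite{Liu12} --- so your proposal has to stand entirely on its own. Much of its architecture is correct. The local reformulation ($(T_1 \Rightarrow T_3)$-free is equivalent to every out-neighbourhood being transitive), the substitution lemma (a cyclic triangle meets three distinct modules and a vertex dominating it must lie in a fourth, so copies of $T_1 \Rightarrow T_3$ project to the base), the verification that $T_n$ and $T_n \Rightarrow T_1$ are $(T_1 \Rightarrow T_3)$-free, the quotient/strong-component induction with the transitive tail absorbed into the $T_1$-slot, and the endgame are all sound. In particular the step you call ``easily seen'' is indeed fine: in a round ordering, $N^+(v_{i+1}) \supseteq N^+(v_i)\setminus\{v_{i+1}\}$ forces consecutive out-degrees to drop by at most one, a drop of exactly one makes $\{v_i,v_{i+1}\}$ a nontrivial homogeneous set, and since the degree increments sum to zero around the cycle, primality forces regularity, hence $T \cong T_n$ with $n = 2k+1$ odd.

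The genuine gap is exactly where you say the effort should be concentrated: the proposal never proves that a prime $(T_1\Rightarrow T_3)$-free tournament on at least three vertices is round. That claim is only announced (``I would show'', ``argue that'', ``one checks along the way''), and it is the entire mathematical content of Liu's theorem --- every other step above is routine. Concretely, three things are missing. Injectivity of the map $f(v) = $ source of $T[N^+(v)]$ is easy (if $f(u)=f(v)=w$ with $u\to v$, then $w\to v$ since $w$ is the source of $T[N^+(u)]$ and $v\in N^+(u)$, while $v \to w$ since $w\in N^+(v)$, a contradiction), so $f$ is a permutation; but (i) excluding that $f$ splits into several cycles, and (ii) upgrading a single $f$-cycle $v_0,\dots,v_{n-1}$ to a genuine round ordering, i.e.\ showing each $N^+(v_i)$ is the interval $\{v_{i+1},\dots,v_{i+d^+(v_i)}\}$ --- which is where the transitivity of in-neighbourhoods is hiding --- both require substantive arguments that are nowhere supplied. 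Note also that your cited obstruction $T_3 \Rightarrow T_1$ is not strongly connected, so it only shows that out-transitivity alone is insufficient; it does not clarify what role primality (beyond the strong connectivity it implies) plays in the roundness argument, and hence does not indicate how the missing proof should use it. As written, the proposal is a correct reduction of the theorem to its hard core, not a proof of the theorem.
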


The last result is about $U_5$-free tournaments.

\begin{theorem}[Liu~\cite{Liu15}] \label{Thm:U5-free}
    A prime tournament with at least three vertices is $U_5$-free if and only if either it is isomorphic to $T_n$ for some odd integer $n \geq 3$ or there is a partition $(X, Y, Z)$ of its vertex set such that each $X \cup Y, Y \cup Z, Z \cup X$ induces a transitive subtournament.
\end{theorem}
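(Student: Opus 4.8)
The plan is to prove both directions, first reformulating the partition condition locally. For any tournament $S$, a partition $(X,Y,Z)$ of $V(S)$ makes all three of $S[X\cup Y]$, $S[Y\cup Z]$, $S[Z\cup X]$ transitive if and only if every cyclic triangle of $S$ has exactly one vertex in each part: a tournament is transitive precisely when it contains no cyclic triangle, so a pairwise union is transitive iff no cyclic triangle lies inside it. Hence the partition condition is equivalent to the existence of a $3$-colouring of $V(S)$ in which every cyclic triangle is \emph{rainbow}; call such an $S$ \emph{rainbow-colourable}. Equivalently, if $G_S$ denotes the graph on $V(S)$ whose edges are the pairs lying together in some cyclic triangle, then $S$ is rainbow-colourable iff $\chi(G_S)\le 3$. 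This property is inherited by every subtournament, since restricting a good colouring to an induced subtournament keeps every surviving cyclic triangle rainbow.

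For the ``if'' direction I would verify that both families are $U_5$-free. A direct computation shows that the cyclic triangles of $U_5$ are $\{v_1,v_2,v_4\}$, $\{v_1,v_3,v_4\}$, $\{v_1,v_3,v_5\}$, and $\{v_2,v_4,v_5\}$, so $G_{U_5}\cong K_5-e$ (only the pair $v_2v_3$ is missing) and $\chi(G_{U_5})=4$; thus $U_5$ is not rainbow-colourable, and since rainbow-colourability is hereditary, no rainbow-colourable tournament contains $U_5$. For the family $T_n$, I would use that each out- and in-neighbourhood in $T_n$ consists of consecutive vertices in the cyclic order and hence induces a transitive subtournament, so $T_n$ is \emph{locally transitive}; local transitivity is hereditary, while $U_5$ is \emph{not} locally transitive because the out-neighbourhood $\{v_2,v_4,v_5\}$ of $v_3$ is a cyclic triangle. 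Therefore $T_n$ cannot contain $U_5$.

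For the ``only if'' direction I would argue by induction on $n=|V(T)|$, disposing of the finitely many prime $U_5$-free tournaments with $n\le 6$ by hand as the base (recall there are no prime tournaments on $2$ or $4$ vertices, so on $5$ vertices every prime tournament is already critically indecomposable). For the inductive step I would invoke the theory of indecomposable tournaments of Schmerl and Trotter: either $T$ is critically indecomposable, in which case I intersect their classification of such tournaments with the $U_5$-free condition and check that only the $T_n$ survive; or $T$ has a vertex $v$ with $T\setminus v$ prime. In the latter case $T\setminus v$ is prime, $U_5$-free, and on $n-1$ vertices, so by induction it is $T_{n-1}$ (when $n-1$ is odd) or rainbow-colourable. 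Writing $A=N^+(v)\cap V(T\setminus v)$ and $B=N^-(v)\cap V(T\setminus v)$, it then remains to determine which splits $(A,B)$ keep $T$ both prime and $U_5$-free: primeness of $T$, together with primeness of $T\setminus v$, forbids $A$ or $B$ from being empty and forbids $\{v\}\cup M$ from being a homogeneous set for any proper $M\subseteq V(T\setminus v)$, while $U_5$-freeness rules out, for every $4$-set $Q$ of $V(T\setminus v)$, the adjacency patterns that would make $T[\{v\}\cup Q]\cong U_5$. I would show the surviving patterns either extend the rotational structure to a $T_{n+1}$ or extend a rainbow-colouring of $T\setminus v$ by placing $v$ in a suitable part.

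The main obstacle is exactly this attachment analysis, and in particular the case $T\setminus v\cong T_{n-1}$: here adding $v$ can extend the rotation to $T_{n+1}$, can destroy local transitivity yet still avoid $U_5$ (which must then force a rainbow-colouring), or can be outright impossible, and separating these cleanly — proving that the local-transitivity–destroying splits are precisely the rainbow-colourable outcomes and never leave $T$ prime-but-of-neither-type — is where the real casework lives, alongside pinning down the $U_5$-free critically indecomposable tournaments. I expect the reversal symmetry of $U_5$ (it is self-converse, since $T_5$ is the Paley tournament and the reversed flip is equivalent under an automorphism of $T_5$) to roughly halve this casework, as the hypothesis ``$U_5$-free'' is then invariant under reversing all edges and I may treat $A$ and $B$ symmetrically.
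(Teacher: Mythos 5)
First, a remark on the comparison itself: the paper does not prove this statement at all; it is quoted as background from Liu~\cite{Liu15}, so there is no proof in the paper to measure your attempt against, and I can only evaluate your argument on its own merits.

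Your ``if'' direction is correct and complete. The reformulation of the partition condition as a $3$-colouring in which every cyclic triangle is rainbow, the observation that this property is hereditary, the computation that the cyclic triangles of $U_5$ are $\{v_1,v_2,v_4\}$, $\{v_1,v_3,v_4\}$, $\{v_1,v_3,v_5\}$, $\{v_2,v_4,v_5\}$ (so that the associated graph is $K_5$ minus the edge $v_2v_3$ and has chromatic number $4$), and the local-transitivity argument for $T_n$ are all accurate.

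The ``only if'' direction, however, contains both a concrete error and a genuine gap. The error: the Schmerl--Trotter critically indecomposable tournaments on $n \geq 5$ vertices are exactly $T_n$, $U_n$, and $W_n$ for odd $n$, and it is \emph{not} true that ``only the $T_n$ survive'' the $U_5$-free condition. While $U_n$ does contain $U_5$, the tournament $W_n$ is $U_5$-free for every odd $n$: colour $w$ with colour $1$, the odd-indexed $v_i$ with colour $2$, and the even-indexed $v_i$ with colour $3$; every cyclic triangle of $W_n$ has the form $\{v_i, v_j, w\}$ with $i$ odd and $j$ even, hence is rainbow. So $W_n$ is prime, $U_5$-free, and not isomorphic to any $T_m$; your claim, taken literally, would contradict the very theorem you are proving. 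The critical case must instead conclude ``either $T_n$, or $W_n$, and $W_n$ satisfies the partition condition.'' The gap: the entire inductive step --- classifying how a vertex $v$ can attach to a prime $U_5$-free tournament $T \setminus v$ that is either $T_{n-1}$ or rainbow-colourable so that $T$ stays prime and $U_5$-free, and showing the outcome is again one of the two types --- is deferred by you as ``where the real casework lives.'' That attachment analysis is not a routine verification; it is the substance of Liu's theorem, and nothing in the proposal carries it out or even isolates the key structural lemma that would make it tractable. As written, you have a correct proof of one implication and a plausible but unexecuted plan, with one false intermediate claim, for the other.
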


\subsection*{Organization}
The rest of the paper is organized as follows.
In \Cref{Section:Prelim}, we give some definitions and another description of~\Cref{Thm:main-intro} from a constructive viewpoint.
In \Cref{Section:homogeneous-sets}, we investigate the properties of a vertex subset of a $\Delta(1, 2, 2)$-free tournament which induces a copy of $T_5$, $P_7^-$, or $P_7$.
In \Cref{Section:Finding-Paving-Orderings}, we prove a ``reshuffling" lemma useful when finding an ordering of tournament with nice properties and deduce some simple cases of~\Cref{Thm:main-intro}.
In \Cref{Section:main-theorem}, we complete the proof of the structure theorem for $\Delta(1, 2, 2)$-free tournaments.
In \Cref{Section:backedge-structure}, we translate the strctural information of $\Delta(1, 2, 2)$-free tournaments in the form of backedge graphs.
In \Cref{Section:Application}, we prove~\Cref{Thm:colouring-intro} and~\Cref{Thm:transitive-intro}.

\section{Preliminaries} \label{Section:Prelim}

\subsection{Graphs, digraphs, and ordered graphs}
All graphs and directed graphs in this paper are finite and simple.
For a positive integer $n$, let~$[n] = \{1, \ldots, n\}$.
Let $T$ be a tournament and $X \subseteq V(T)$.
We write $T \setminus X$ for $T[V(T) \setminus X]$ and $T \setminus v$ for~$T \setminus \{v\}$.
For two distinct vertices $u, v \in V(T)$, we write $uv \in E(T)$ if there is an edge from $u$ to~$v$ in $T$ and say $u$ is \emph{adjacent to $v$} and $v$ is \emph{adjacent from $u$}.
We also say that $v$ is an \emph{out-neighbour} of $u$ and $u$ is an \emph{in-neighbour} of $v$.
For $v \in V(T)$, the \emph{out-neighbourhood} and \emph{in-neighbourhood} of~$v$, denoted by $N_T^+(v)$ and $N_T^-(v)$, are the sets of out-neighbours and in-neighbours of $v$, respectively.
The \emph{out-degree} (respectively, \emph{in-degree}) of $v$, denoted by $d_T^+(v)$ (respectively, $d_T^-(v)$), is the number of out-neighbours (respectively, in-neighbours) of $v$.
For $A \subseteq V(T)$, we write $N_A^+(v) = A \cap N_T^+(v)$ and $N_A^-(v) = A \cap N_T^-(v)$.
We say $v$ is \emph{out-complete to $A$} if $A \subseteq N_T^+(v)$ and \emph{in-complete from $A$} if~$A \subseteq N_T^-(v)$.
We omit the subscript $T$ or $A$ when it is clear from the context.

Let $G$ and $H$ be ordered graphs.
The \emph{underlying graph} of~$G$ is an (unordered) graph obtained from~$G$ by forgetting the order of~$G$.
An ordered graph is \emph{connected} if its underlying graph is connected.
We say $H$ is an \emph{induced subgraph} of $G$ if the underlying graph of $H$ is an induced subgraph of the underlying graph of $G$ and the ordering of $H$ is inherited from the ordering of $G$.
Two ordered graphs~$G$ and~$H$ are \emph{isomorphic} if there is a bijection $f: V(G) \to V(H)$ that preserves the adjacency and orderings, that is,~$uv \in E(G)$ if and only if~$f(u) f(v) \in E(H)$ and~$u$ precedes~$v$ in the ordering of~$G$ if and only if~$f(u)$ precedes~$f(v)$ in the ordering of~$H$.
A \emph{copy} of~$H$ in~$G$ is an induced subgraph of~$G$ isomorphic to~$H$.
A \emph{component} of~$G$ is a maximal connected induced subgraph of~$G$.

\subsection{Substitutions and join operations} \label{Section:Substitutions}

\emph{Substitution} is a classical operation that builds larger tournaments from smaller ones.
Let $S_1$ and $S_2$ be tournaments and $v \in V(S_1)$.
We say a tournament $T$ is obtained from $S_1$ by \emph{substituting $S_2$ for $v$} if the following holds:

\begin{itemize}
	\item $V(T)$ is the disjoint union of $V(S_1) \setminus \{v\}$ and $V(S_2)$;
	\item $T[V(S_1) \setminus \{v\}]$ is isomorphic to $S_1 \setminus v$;
	\item $T[V(S_2)]$ is isomorphic to $S_2$; and
	\item for each $x_1 \in V(S_1) \setminus \{v\}$ and $x_2 \in V(S_2)$, $x_1 x_2 \in E(T)$ if and only if $x_1 v \in E(S_1)$.
\end{itemize}

Observe that if~$T$ is obtained from~$S_1$ by substituting~$S_2$ for~$v \in V(S_1)$, then~$V(S_2)$ becomes a homogeneous set of~$T$.
Indeed, the converse is also true.
For a tournament $T$ and a homogeneous set~$X \subseteq V(T)$, let~$T/X$ be the tournament isomorphic to~$T[(V(T) \setminus X) \cup \{x\}]$ for some~$x \in X$.
Since~$X$ is a homogeneous set, this notion is well-defined and does not depend on the choice of~$x$.
Then it can be shown that if $T$ has a homogeneous set~$X$, then~$T$ can be obtained from~$T/X$ by substituting~$T[X]$ for~$x$.

Now, we define an operation creating homogeneous pairs.
Let~$S$ be a tournament,~$(V_1, V_2)$ be a partition of~$V(S)$,~$S_1 = S[V_1]$, and~$S_2 = S[V_2]$.
We say a tournament~$T$ is obtained from a tournament~$J$ by \emph{joining~$S$ to an edge~$uv \in E(J)$ along~$(V_1, V_2)$} if the following hold:

\begin{itemize}
    \item $V(T)$ is the disjoint union of $V(J) \setminus \{u, v\}$ and $V(S)$;
    \item $T \setminus V_1$ is isomorphic to the tournament obtained from $J \setminus u$ by substituting $S_2$ for $v$;
    \item $T \setminus V_2$ is isomorphic to the tournament obtained from $J \setminus v$ by substituting $S_1$ for $u$; and
    \item $S = T[V(S)]$. 
\end{itemize}

Observe that the join operation creates a homogeneous pair, and if a tournament contains a homogeneous pair, then it can be obtained from two smaller tournaments by applying the join operation.

In particular, when~$S = P_7^-$ and~$(V_2, V_1)$ is the degree partition of~$P_7^-$, we call the operation of joining $S$ to an edge~$uv$ along~$(V_1, V_2)$ the~\emph{$P_7^-$-join}.
Note that we changed the order of the degree partition when defining the~$P_7^-$-join.
This is because, at the end, we will replace some \emph{backedges} with a copy of~$H_6$ in the backedge graph by applying a~$P_7^-$-join.

\subsection{Constructing \texorpdfstring{$\Delta(1, 2, 2)$}{(1, 2, 2)}-free tournaments from paving tournaments} \label{Subsection:main}

To state our main theorem more precisely, we define a special type of tournaments.
A tournament~$T$ with~$n$ vertices is called a \emph{paving tournament} if there exists an ordering~$\sigma = (v_1, \ldots, v_n)$, called a \emph{paving ordering}, of~$T$ such that~$B_\sigma(T)$ satisfies the following conditions:
\begin{enumerate}[label = (P\arabic*)]
	\item for each $i \in [n-1]$, the two vertices $v_i$ and $v_{i+1}$ are not adjacent; and \label{Condition:Paving1}
	\item for each $i \in [n]$, the vertex $v_i$ has at most one neighbour in $\{v_s : s < i\}$ and at most one neighbour in $\{v_t : t > i\}$. \label{Condition:Paving2}
\end{enumerate}
Observe that every paving tournament is obtained from a transitive tournament by reversing edges in vertex-disjoint directed paths.

A vertex $v$ of a tournament $T$ is \emph{nice} if there are no three distinct vertices $x, y_1, y_2$ in $T$ such that both $v, x, y_1$ and $v, x, y_2$ induce cyclic triangles.
An edge $uv$ of a tournament $T$ with $n$ vertices is called a \emph{bridge} if there is an ordering $\sigma = (v_1, \ldots, v_n)$ of $T$ such that $u = v_i$ and $v = v_j$, and the following hold:

\begin{itemize}
    \item $i > j$; 
	\item $uv$ is an isolated edge in $B_\sigma(T)$ (so $uv$ is a backedge under $\sigma$ and $u, v$ each have degree 1 in $B_{\sigma}(T)$);
	\item the edges between $\{v_s : s < i\}$ and $\{v_t : t > i\}$ in $B_\sigma(T)$ form  a matching; and
	\item the edges between $\{v_s : s < j\}$ and $\{v_t : t > j\}$ in $B_\sigma(T)$ form a matching. 
\end{itemize}
Observe that if $T$ is a paving tournament and $\sigma$ is a paving ordering of $T$, then every isolated edge in~$B_\sigma(T)$ is a bridge in $T$.

Finally, we are ready to state our main theorem as an explicit construction.

\begin{theorem} \label{Thm:main}
	Let $T$ be a $\Delta(1, 2, 2)$-free tournament.
	Then one of the following holds:
	\begin{enumerate}
		\item $T$ is isomorphic to a basic tournament;
		\item $T$ is a $\Delta(1,2,2)$-free paving tournament;
        \item $T$ is obtained from a smaller $\Delta(1, 2, 2)$-free tournament by substituting a basic tournament for a nice vertex; or \label{itemc}
		\item $T$ is obtained from a smaller $\Delta(1, 2, 2)$-free tournament by applying a $P_7^-$-join to a bridge. \label{itemd}
	\end{enumerate}
\end{theorem}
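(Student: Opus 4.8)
The plan is to read off \Cref{Thm:main} from the two structural statements already in hand, \Cref{Thm:main-intro} and \Cref{Thm:main-backedge-intro}, since these do all of the combinatorial work; what remains is to repackage each of their outcomes into the corresponding constructive clause and, in two of the clauses, to normalise an ordering. Concretely, I would fix the ordering $\sigma$ furnished by \Cref{Thm:main-backedge-intro}, so that every component of $B_\sigma(T)$ is a monotone path or a copy of $H_5, H_6$, or $H_7$, with the $H_5$- and $H_7$-components consecutive and the flocks of each $H_6$-component consecutive. By the consecutiveness observation recorded after \Cref{Thm:main-backedge-intro}, each consecutive $H_5$- or $H_7$-component is a homogeneous set $X$ of $T$ with $T[X]\cong T_5$ or $P_7$ (because $H_5, H_7$ are backedge graphs of $T_5, P_7$), and the pair of flocks of each $H_6$-component is a homogeneous pair $(A_1,A_2)$ with $T[A_1\cup A_2]\cong P_7^-$ and $(A_1,A_2)$ equal to the degree partition of this $P_7^-$. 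The strategy is then: if $B_\sigma(T)$ has a complex (non-path) component, I peel one such component off, landing in clause \labelcref{itemc} or \labelcref{itemd}; otherwise all components are monotone paths and I aim for clause 2, while the degenerate case where a single complex component is all of $V(T)$ gives clause 1.

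For a homogeneous set $X$ with $T[X]\cong T_5$ or $P_7$, the tournament $T/X\cong T[(V(T)\setminus X)\cup\{x\}]$ is a subtournament of $T$, hence $\Delta(1,2,2)$-free, and it is strictly smaller since $|X|\ge 5$; moreover $T$ is obtained from $T/X$ by substituting the basic tournament $T[X]$ for $x$. The one substantive point is that $x$ is a \emph{nice} vertex of $T/X$, and this is the single genuinely new argument I would spell out. If $x$ were not nice there would be distinct external vertices $a, b_1, b_2$ with $\{x,a,b_1\}$ and $\{x,a,b_2\}$ both cyclic in $T/X$; the edge between $x$ and $a$ is shared, and whichever way it is oriented, unfolding the two cyclic triangles through $X$ gives (say, when $X\Rightarrow a$) the relations $X\Rightarrow a$, $a\to b_1$, $a\to b_2$, $b_1\Rightarrow X$ and $b_2\Rightarrow X$. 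Picking any two vertices $x_1,x_2\in X$ then exhibits the cyclic blow-up $a\Rightarrow\{b_1,b_2\}\Rightarrow\{x_1,x_2\}\Rightarrow a$, which is precisely a copy of $\Delta(1,2,2)$ in $T$, a contradiction. The reversed orientation of the shared edge yields the same contradiction symmetrically, so $x$ is nice and we are in clause \labelcref{itemc}.

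For a homogeneous pair $(A_1,A_2)$ with $T[A_1\cup A_2]\cong P_7^-$, let $J$ be the tournament obtained by contracting $A_1$ and $A_2$ to single vertices $u$ and $v$; then $T$ is recovered from $J$ by a $P_7^-$-join to the edge $uv$, the order swap between the join and the degree partition being exactly as in the definition of the $P_7^-$-join. Two things must be checked. First, $J$ is $\Delta(1,2,2)$-free: a copy of $\Delta(1,2,2)$ in $J$ using $u$ and/or $v$ lifts to $T$ by replacing $u,v$ with suitable vertices of the two blocks, homogeneity preserving all edges to external vertices and the presence of edges in both directions between $A_1$ and $A_2$ inside $P_7^-$ letting us match the orientation used in the copy. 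Second, and this is where the real difficulty lies, $uv$ must be a \emph{bridge} of $J$: contracting the $H_6$-component in $\sigma$ collapses it to a single isolated backedge $uv$ in the backedge graph of $J$, so it remains to reorder $J$ so that the two cuts at the positions of $u$ and of $v$ meet only monotone-path edges, each of which crosses any cut at most once and hence forms a matching. This normalisation is exactly the kind of statement I would isolate as a reshuffling lemma, and certifying both matching conditions simultaneously is the main obstacle of the deduction.

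Finally, when every component of $B_\sigma(T)$ is a monotone path, condition \ref{Condition:Paving2} already holds, since a vertex's neighbours all lie in its own monotone-path component and so it has at most one earlier and one later neighbour; the remaining condition \ref{Condition:Paving1}, that consecutive vertices be non-adjacent, need not hold for $\sigma$ itself (two vertices joined by a path-edge may be consecutive), and I would again appeal to such a reshuffling lemma to spread the reversed paths apart and produce a genuine paving ordering, giving clause 2. Everything except the two ordering normalisations is a direct translation through the substitution and join operations of \Cref{Section:Substitutions} together with the niceness computation; among the two, the bridge certification is the more delicate, since it constrains the ordering at two cuts at once, and I expect it to be the crux of the argument.
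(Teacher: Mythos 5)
Your proposal is circular relative to the paper, and this is the central flaw. You take \Cref{Thm:main-intro} and \Cref{Thm:main-backedge-intro} as ``structural statements already in hand,'' but neither is available when proving \Cref{Thm:main}. In the paper, \Cref{Thm:main-backedge-intro} is proved in \Cref{Section:backedge-structure} \emph{from} \Cref{Thm:main}: its proof goes through \Cref{Thm:initial-isolated}, whose statement is phrased in terms of ``initial tournaments,'' i.e.\ in terms of the very construction that \Cref{Thm:main} supplies. Likewise, \Cref{Thm:main-intro} is just the decomposition form of \Cref{Thm:main}; the paper never proves it separately, and any proof of it contains the entire difficulty of \Cref{Thm:main}. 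Concretely, clause 2 (the paving case) is exactly what \Cref{Section:Finding-Paving-Orderings} and \Cref{Section:main-theorem} establish: one argues by induction on $\lvert V(T)\rvert$ that a $\{\Delta(1,2,2), T_5, P_7^-\}$-free tournament $T$ is a paving tournament, using \Cref{Corollary:exactly-one-neighbour} when some vertex $x$ has $\min\{d^+(x), d^-(x)\} \leq 1$ and \Cref{Theorem:two-nbrs-in-both-direction} (the Fuzzy Lemma, the Triangle Reshuffling Lemma, and the long case analysis of \Cref{Section:main-theorem}) otherwise; the other clauses rest on the homogeneous-structure analysis of \Cref{Section:homogeneous-sets}. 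None of this work appears in your proposal; it is hidden inside the theorems you assume, so nothing is actually proved.

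Second, even granting the assumed theorems, your treatment of clause \labelcref{itemd} is incomplete, and you say so yourself: the bridge certification is deferred to an unproven ``reshuffling lemma'' and flagged as ``the crux.'' The missing idea is that the paper does not certify bridges by reordering at all: \Cref{Lemma:bridge-edge} is an if-and-only-if, stating that the result of a $P_7^-$-join to $uv$ is $\Delta(1,2,2)$-free with non-nested homogeneous pair precisely when $uv$ is a bridge. Hence, once $T$ is known to be $\Delta(1,2,2)$-free, bridgehood is automatic provided the pair $(A_1, A_2)$ is \emph{not nested}; and if it is nested, then $A_1 \cup A_2$ lies in a set inducing $P_7$, which is homogeneous by \Cref{Corollary:homogeneous-C7}, and one lands in clause \labelcref{itemc} instead. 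Your proposal never mentions this dichotomy, and without it the assertion ``$uv$ must be a bridge of $J$'' is not provable as stated: in the nested case the seventh vertex of the $P_7$ lies in $N^+(u) \cap N^-(v)$ in $J$, violating condition \labelcref{Cond:Bridge1}, so $uv$ is genuinely not a bridge. (One can show that flocks of an $H_6$-component in an ordering as in \Cref{Thm:main-backedge-intro} cannot be nested, which would rescue your claim, but you do not argue this.) Your niceness computation for clause \labelcref{itemc} is correct --- it reproduces one direction of \Cref{Lemma:nice-vertices} --- but it is the only substantive step of the deduction that you actually carry out.
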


One caveat remains: The tournament $\Delta(1, 2, 2)$ admits exactly one paving ordering, say $(v_1, \dots, v_5)$ where $v_3v_1, v_5v_3, v_4v_2$ are backedges. 
Therefore, we need to say ``$\Delta(1,2,2)$-free paving tournament'' in the result above. 
With this restriction, every tournament constructed as in~\Cref{Thm:main} is $\Delta(1,2,2)$-free; we will show that the operations in~\ref{itemc} and~\ref{itemd} preserve being $\Delta(1,2,2)$-free.  

\section{Homogeneous structures in \texorpdfstring{$\Delta(1, 2, 2)$}{(1, 2, 2)}-free tournaments} \label{Section:homogeneous-sets}

In this section, we figure out the properties of a set of vertices in a $\Delta(1, 2, 2)$-free tournament that induces a copy of a basic tournament.

\begin{figure}[t]
    \begin{center}	
        \begin{tikzpicture}[scale=0.75]
            \tikzset{vertex/.style = {shape=circle, fill=black, draw,minimum size=5pt, inner sep=0pt}}
            \tikzset{arc/.style = {->,thick, > = stealth}}
            \tikzset{edge/.style = {thick}}
                             
            \foreach \i in {1,2,...,5}{
                  \node[vertex] (\i) at ({90+360/5*(\i-1)}:2) {};
                 }
       		\foreach \i in {1,2,...,5}{
                  \node at ({90+360/5*(\i-1)}:2.4) {$v_{\i}$};
                 }

            \draw[arc] (1) to (2);
            \draw[arc] (1) to (3);
            \draw[arc] (2) to (3);
            \draw[arc] (2) to (4);
            \draw[arc] (3) to (4);
            \draw[arc] (3) to (5);
            \draw[arc] (4) to (5);
            \draw[arc] (4) to (1);
            \draw[arc] (5) to (1);
            \draw[arc] (5) to (2);
        \end{tikzpicture} \quad\quad
        \begin{tikzpicture}[scale=0.75]
            \tikzset{vertex/.style = {shape=circle, fill=black, draw,minimum size=5pt, inner sep=0pt}}
            \tikzset{arc/.style = {->,thick, > = stealth}}
            \tikzset{edge/.style = {thick}}
                             
            \foreach \i in {1,2,...,5}{
                  \node[vertex] (\i) at (-4+2*\i, 0) {};
                 }
       		\foreach \i in {1,2,...,5}{
                  \node at (-4+2*\i, -0.5) {$v_{\i}$};
                 }

            \draw[edge, bend left=30] (1) to (4);
            \draw[edge, bend left=30] (2) to (5);
            \draw[edge, bend left=45] (1) to (5);
        \end{tikzpicture} 
    \caption{The tournament $T_5$ and the backedge of $T_5$ with respect to the ordering $(v_1, \ldots, v_5)$}
    \label{Figure:T5}
    \end{center}
\end{figure}
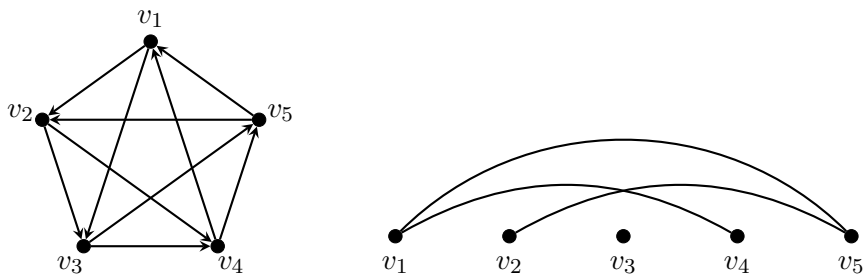 

\begin{lemma} \label{Lemma:homogeneous-C5}
    Let $T$ be a $\Ptwo$-free tournament. Suppose there exists a subset $X \subseteq V(T)$ such that $T[X]$ is isomorphic to $T_5$. Then $X$ is a homogeneous set of $T$.
\end{lemma}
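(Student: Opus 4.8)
The plan is to argue by contradiction. Suppose $X$ is not a homogeneous set; then some vertex $v \in V(T) \setminus X$ is mixed on $X$, so both $A := N^+(v) \cap X$ and $B := N^-(v) \cap X$ are nonempty and partition $X$. Since $T$ is $\Delta(1,2,2)$-free, so is $T[X \cup \{v\}]$, and I will derive a contradiction by exhibiting a copy of $\Delta(1,2,2)$ inside $T[X \cup \{v\}]$. Note that any such copy must use $v$: the subtournament $T[X] \cong T_5$ is regular (every vertex has out-degree $2$), whereas $\Delta(1,2,2)$ is not, so $T_5$ itself contains no copy of $\Delta(1,2,2)$. Throughout I will match against the description of $\Delta(1,2,2)$ as a cyclic triangle with apex $a$ and two doubled pairs $\{b_1,b_2\}$, $\{c_1,c_2\}$ satisfying $b_1 \to b_2$, $c_1 \to c_2$, and $a \Rightarrow \{b_1,b_2\} \Rightarrow \{c_1,c_2\} \Rightarrow a$.

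The first key step is to cut down the number of configurations of $v$ using the symmetries of $T_5$. Writing $X = \{v_1,\dots,v_5\}$ as in the definition of $T_5$, the rotation $v_i \mapsto v_{i+1}$ (indices mod $5$) is an automorphism of $T_5$, and the map $v_i \mapsto v_{-i}$ is an isomorphism from $T_5$ to its reverse, so $T_5$ is self-converse. Because $\Delta(1,2,2)$ is itself self-converse (reversing every edge preserves the cyclic-triangle-with-two-doubled-vertices structure), reversing all edges of $T$ preserves both $\Delta(1,2,2)$-freeness and $T[X] \cong T_5$ while interchanging $A$ and $B$. Hence I may assume $|A| \le 2$, and then the rotational symmetry lets me assume $A$ is one of $\{v_1\}$, $\{v_1,v_2\}$, or $\{v_1,v_3\}$, the orbit representatives of the nonempty subsets of size at most $2$.

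In each of these three cases I will display an explicit copy of $\Delta(1,2,2)$ on $v$ together with four vertices of $X$, and check the ten required adjacencies. For $A = \{v_1\}$, the assignment $(a,b_1,b_2,c_1,c_2) = (v_1, v_2, v_3, v_4, v)$ works; for $A = \{v_1,v_2\}$, the assignment $(a,b_1,b_2,c_1,c_2) = (v_2, v_3, v_4, v_5, v)$ works. The main obstacle is the last case $A = \{v_1, v_3\}$, where the two out-neighbours of $v$ are non-consecutive: here the natural attempt to place $v$ at the apex $a$ fails, because $v_1$ and $v_3$ have no common out-neighbour in $B$ (indeed none in $X$), so the dominated pair $\{c_1,c_2\}$ cannot be found; for the same reason $v$ cannot serve as a vertex of the doubled ``$b$''-pair. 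The resolution is to place $v$ as $c_1$, via the assignment $(a,b_1,b_2,c_1,c_2) = (v_3, v_4, v_5, v, v_1)$, which one verifies satisfies $a \Rightarrow \{b_1,b_2\}$, $b_1 \to b_2$, $\{b_1,b_2\} \Rightarrow \{c_1,c_2\}$, $c_1 \to c_2$, and $\{c_1,c_2\} \Rightarrow a$. Since every configuration of a mixed vertex produces a forbidden $\Delta(1,2,2)$, no mixed vertex exists and $X$ is a homogeneous set of $T$.
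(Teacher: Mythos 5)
Your proof is correct and takes essentially the same approach as the paper's: reduce to three configurations of the mixed vertex using the reversal symmetry (both $T_5$ and $\Delta(1,2,2)$ are self-converse) together with the rotational symmetry of $T_5$, then exhibit an explicit copy of $\Delta(1,2,2)$ in each case. The only cosmetic difference is that you normalize the out-neighbourhood of the mixed vertex where the paper normalizes its in-neighbourhood, so your three witnesses are the mirror images of the paper's.
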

\begin{proof} 
Let $X \subseteq V(T)$ be a set of vertices that induces a copy of $T_5$.
Let $X = \{v_1, \ldots, v_5\}$ such that the edges in $T[X]$ which are backedges under the ordering $(v_1, \ldots, v_5)$ are $v_4 v_1$, $v_5 v_1$, and $v_5 v_2$. 
See~\Cref{Figure:T5} for an illustration.
For convenience, we refer to the edges between $v_i$ and $v_j$ for $j>i$ as \emph{short} if $j-i \equiv 1 \pmod{5}$, and to the other type of edge as \emph{long}. Note that the automorphism group of $T_5$ acts transitively on the set of short edges as well as the set of long edges. 

Suppose for a contradiction that there is a vertex $u \in V(T) \setminus X$ such that $u$ has both an in-neighbour and an out-neighbour in $X$. 
Since $T_5$ and $\Ptwo$ are invariant under reversing all edges, it suffices to consider the cases where $u$ has at most two in-neighbours in $X$.
Without loss of generality, assume that $v_1$ is an in-neighbour of $v$ in $X$.
If $v_1$ is the unique in-neighbour of $u$ in $X$, then~$v_1 \Rightarrow \{u, v_3\} \Rightarrow \{v_4, v_5\} \Rightarrow v_1$ in $T$, so $T$ contains a copy of $\Ptwo$.
Thus, $u$ has two in-neighbours in $X$.
If those in-neighbours are the ends of a short edge, we may assume that $N_X^-(u) = \{v_1, v_2\}$.
However, then~$v_5\Rightarrow \{v_1, v_2\} \Rightarrow \{v_3, u\} \Rightarrow v_5$ in $T$, so $T$ contains a copy of $\Ptwo$.
Similarly, if those in-neighbours the ends of a long edge, then we may assume that $N_X^-(u) = \{v_1, v_3\}$, we have~$v_1 \Rightarrow \{v_3, u\} \Rightarrow \{v_4, v_5\}$ in~$T$.
Thus, $T$ again contains a copy of $\Delta(1, 2, 2)$, a contradiction.
Therefore, the set $X$ is a homogeneous set in $T$.
\end{proof}

\begin{lemma} \label{Lemma:no-mixed-on-C6}
    Let $T$ be a $\Ptwo$-free tournament. 
    Suppose that $T$ contains a copy of $P_7^-$, and let~$(A_1, A_2)$ be the degree partition of the copy.
    Then for any $v \in V(T) \setminus (A_1 \cup A_2)$, the vertex $v$ is not mixed on $A_1$ nor is $v$ mixed on $A_2$.
\end{lemma}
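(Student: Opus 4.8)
The plan is to argue by contradiction: assuming some external vertex $v\in V(T)\setminus(A_1\cup A_2)$ is mixed on $A_1$ (or on $A_2$), I would exhibit a copy of $\Ptwo$ inside $T[\{v\}\cup A_1\cup A_2]$, contradicting that $T$ is $\Ptwo$-free. The first step is to record the structure of the copy of $P_7^-$ that will drive the analysis. Fixing the concrete labelling $A_1=\{1,2,4\}$ and $A_2=\{3,5,6\}$ coming from the Paley description, one checks directly that $T[A_1]$ is the cyclic triangle $1\to 2\to 4\to 1$ and $T[A_2]$ is the cyclic triangle $3\to5\to6\to3$; moreover each vertex of $A_1$ has exactly two out-neighbours in $A_2$ and each vertex of $A_2$ has exactly one out-neighbour in $A_1$. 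Finally, $P_7^-$ admits the order-$3$ automorphism $\rho=(1\,2\,4)(3\,6\,5)$, acting as a $3$-cycle on $A_1$ and simultaneously as a $3$-cycle on $A_2$. These facts are what keep the case analysis short.

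Second, I would use the reversal symmetry to reduce to the single set $A_1$. Since both $\Ptwo$ and $P_7^-$ are self-converse (the latter because $P_7$ is), the reverse tournament $T^{\mathrm{rev}}$ is again $\Ptwo$-free, the same six vertices induce a copy of $P_7^-$ in $T^{\mathrm{rev}}$, and its degree partition is $(A_2,A_1)$, because reversing edges interchanges out-degree $3$ and out-degree $2$. As being mixed on a set is invariant under reversal, a vertex mixed on $A_2$ in $T$ is mixed on the out-degree-$3$ part of a copy of $P_7^-$ in the $\Ptwo$-free tournament $T^{\mathrm{rev}}$. Hence it suffices to prove that no external vertex is mixed on $A_1$.

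Third, using $\rho$ I would reduce to two configurations according to whether $|N^+(v)\cap A_1|$ equals $1$ or $2$, since $\rho$ acts transitively on the one-element and on the two-element subsets of $A_1$; fix the representative $N^+(v)\cap A_1=\{1\}$ in the first case and $N^+(v)\cap A_1=\{1,2\}$ in the second. For each representative the edges between $v$ and $A_1$ are determined, but the three edges between $v$ and $A_2$ are still free, so I would split into a small number of subcases according to $v$'s adjacency to $A_2$ and, in each, write down an explicit copy of $\Ptwo$ of the form $P\Rightarrow Q\Rightarrow R\Rightarrow P$, where the three parts $P,Q,R$ have sizes $1,2,2$ (in some cyclic order) and are built from $v$, the cyclic triangle $A_1$, and one or two vertices of $A_2$. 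For instance, in the first case with $2\to v$, $4\to v$ and $3\to v$, the vertices $1,2,3,4,v$ induce a $\Ptwo$ via $\{1\}\Rightarrow\{2,3\}\Rightarrow\{v,4\}\Rightarrow\{1\}$; the remaining subcases ($v\to3$ together with the two possibilities for the edge between $v$ and $5$) are handled by analogous gadgets using $5$ or $6$ in place of $3$, and the second case is symmetric.

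The main obstacle, and the only genuinely tedious part, is precisely that $v$'s adjacency to $A_2$ is unconstrained, so no single $\Ptwo$-gadget works across all subcases; one must produce a matching family of them, each using only the edges forced by the case assumptions together with the internal edges of the copy of $P_7^-$. The two symmetries are essential here: reversal halves the work by eliminating $A_2$, and $\rho$ collapses the six possible patterns of $v$ on $A_1$ to two, leaving each branch with at most three subcases on $A_2$. The bookkeeping I would be most careful about is verifying that these subcases are exhaustive and that every listed gadget indeed uses only forced edges, so that each one is a bona fide copy of $\Ptwo$.
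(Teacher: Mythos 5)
Your overall strategy is the same as the paper's: reduce to the out-degree-$3$ part $A_1$ via reversal, use the order-$3$ automorphism to fix a representative according to $\lvert N^+(v)\cap A_1\rvert\in\{1,2\}$, and then exhibit explicit copies of $\Ptwo$ of shape $x\Rightarrow\{y_1,y_2\}\Rightarrow\{z_1,z_2\}\Rightarrow x$. Your structural facts about the Paley labelling and the automorphism $\rho=(1\,2\,4)(3\,6\,5)$ are correct, your displayed gadget $\{1\}\Rightarrow\{2,3\}\Rightarrow\{v,4\}\Rightarrow\{1\}$ checks out, and gadgets for your two remaining subcases of Case 1 do exist (e.g.\ $\{1\}\Rightarrow\{2,5\}\Rightarrow\{v,6\}\Rightarrow\{1\}$ when $v\to3$, $5\to v$, and $\{2\}\Rightarrow\{v,4\}\Rightarrow\{1,5\}\Rightarrow\{2\}$ when $v\to3$, $v\to5$), so that half of the argument is sound.

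However, the sentence ``the second case is symmetric'' is a genuine gap: no symmetry of the configuration maps Case 2 ($\lvert N^+(v)\cap A_1\rvert=2$) to Case 1. The automorphism group of $P_7^-$ is the order-$3$ group generated by $\rho$, which preserves $\lvert N^+(v)\cap A_1\rvert$, so it cannot exchange the two cases. The only remaining symmetry is edge reversal, but reversal sends ``$v$ mixed on $A_1$ with two out-neighbours'' to ``$v$ mixed on $A_1$ with one out-neighbour \emph{where $A_1$ is now the out-degree-$2$ part of the copy in $T^{\mathrm{rev}}$}'' --- that is, to a configuration on the out-degree-$2$ side, which is exactly the statement you already chose to \emph{derive} from the $A_1$-statement in your step two. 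Invoking reversal again inside the proof of the $A_1$-statement is therefore circular, and Case 2 is left unproved. The gap is fixable within your framework: with your Case 2 representative $v\to1$, $v\to2$, $4\to v$, condition on the single edge between $v$ and $3$ and take $\{4\}\Rightarrow\{1,v\}\Rightarrow\{2,3\}\Rightarrow\{4\}$ if $v\to3$, and $\{2\}\Rightarrow\{3,4\}\Rightarrow\{5,v\}\Rightarrow\{2\}$ if $3\to v$; all edges used are forced. These two gadgets are precisely the paper's treatment of this case (there phrased as ``$v$ has exactly one in-neighbour in $A_1$''), transported to your labelling, and with them added your proof becomes complete and essentially identical to the paper's.
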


\begin{proof}
    Suppose the conclusion does not hold.
    By symmetry between $A_1$ and $A_2$ after reversing edges, and the fact that $\Ptwo$ is invariant under reversing edges, we may assume there is a vertex $v \in V(T)\backslash (A_1\cup A_2)$ mixed on $A_1$. 
    Label the vertices in the copy of $P_7^-$ by $a_1, a_2, \ldots, a_6$ so that~$(a_1, \ldots, a_6)$ is the canonical ordering of the copy.
    Then $A_1 = \{a_1, a_2, a_3\}$ and $A_2 = \{a_4, a_5, a_6\}$.
    
    Suppose that $v$ has two in-neighbours in $A_1$. 
    Without loss of generality, assume $a_1$ and $a_3$ are in-neighbours of $v$. 
    Since $v$ is mixed on $A_1$, $a_2$ is an out-neighbour of $v$. 
    If $a_4$ is out-neighbour of $v$, then~$a_3\Rightarrow \{a_1,v\}\Rightarrow\{a_2,a_4\}\Rightarrow a_3$; 
    otherwise, we have $a_2\Rightarrow \{a_3, a_4\}\Rightarrow\{a_5,v\}\Rightarrow a_2$.
    Thus, $v$ has exactly one in-neighbour in $A_1$, say $a_1$ without losing the generality.
    However, if $a_6$ is an out-neighbour of $v$, then $a_1\Rightarrow \{a_2,v\}\Rightarrow\{a_3,a_6\}\Rightarrow a_1$; otherwise, we have $a_3\Rightarrow \{a_1, a_6\}\Rightarrow\{a_4,v\}\Rightarrow a_3$.
    Therefore, we find a copy of $\Delta(1, 2, 2)$ in each case, a contradiction.
\end{proof}

Since deleting any vertex in $P_7$ results in a tournament isomorphic to $P_7^-$, we obtain the following.

\begin{corollary} \label{Corollary:homogeneous-C7}
    Let $T$ be a $\Ptwo$-free tournament. 
    Suppose there exists a subset $X \subseteq V(T)$ such that $T[X]$ is isomorphic to $P_7$. 
    Then $X$ is a homogeneous set of $T$.
\end{corollary}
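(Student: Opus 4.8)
The plan is to deduce this directly from \Cref{Lemma:no-mixed-on-C6}: I will show that if some vertex $v \in V(T) \setminus X$ were mixed on $X$, then I can delete a single suitable vertex of $X$ so that both an in-neighbour and an out-neighbour of $v$ survive in the \emph{same} part of the degree partition of the resulting copy of $P_7^-$, contradicting the lemma. First I would fix the identification of $X$ with $P_7$, writing $X = \{v_0, \dots, v_6\}$ with indices in $\mathbb{Z}_7$ and $v_i v_j \in E(T)$ iff $j - i \equiv 1, 2,$ or $4 \pmod 7$. For any $w \in X$, the tournament $T[X \setminus \{w\}]$ is isomorphic to $P_7^-$, and I would record that its degree partition is exactly $(N_X^+(w), N_X^-(w))$: deleting $w$ lowers the out-degree of precisely the in-neighbours of $w$ from $3$ to $2$, while the out-neighbours of $w$ retain out-degree $3$ and so form the part $D_1$.

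The key observation is purely arithmetic: $\{1,2,4\}$ is a planar difference set modulo $7$, so every nonzero element of $\mathbb{Z}_7$ occurs exactly once as a difference of two elements of $\{1,2,4\}$. Consequently, for any two distinct $a, b \in X$ there is a vertex $w \in X \setminus \{a,b\}$ with $a - w, b - w \in \{1,2,4\}$, i.e.\ with $a, b \in N_X^+(w)$; in particular $w \neq a$ and $w \neq b$ automatically, since $0 \notin \{1,2,4\}$. Thus $a$ and $b$ lie in the same part of the degree partition of the copy $T[X \setminus \{w\}]$ of $P_7^-$.

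With these in hand, suppose for a contradiction that $v \in V(T) \setminus X$ is mixed on $X$, and pick an in-neighbour $a$ and an out-neighbour $b$ of $v$ in $X$. Applying the observation to $a, b$ yields a vertex $w$ such that, in the copy $T[X \setminus \{w\}]$ of $P_7^-$ with degree partition $(A_1, A_2)$, both $a$ and $b$ lie in one part, say $A_1$. Since $a$ is an in-neighbour and $b$ is an out-neighbour of $v$, the vertex $v$ is mixed on $A_1$, which contradicts \Cref{Lemma:no-mixed-on-C6} applied to this copy (note $v \notin X \setminus \{w\} = A_1 \cup A_2$). Hence no vertex of $V(T) \setminus X$ is mixed on $X$, so $X$ is a homogeneous set of $T$.

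The only non-routine point is the difference-set fact, which is exactly what guarantees that a \emph{single} deletion suffices to collapse an in-neighbour and an out-neighbour of $v$ into a common degree class. Without it, one would instead have to combine the constraints arising from deleting every vertex of $X$ and argue that together they force the in/out pattern of $v$ on $X$ to be constant; this is a slightly longer but equivalent route, and I expect the short difference-set argument to be the cleanest way to record the proof.
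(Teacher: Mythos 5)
Your proposal is correct, and it rests on the same key lemma as the paper's proof (\Cref{Lemma:no-mixed-on-C6}), but the mechanism is genuinely different. The paper also deletes vertices of $X$ to produce copies of $P_7^-$, but it never tracks where the neighbours of $v$ land in the degree partition: it fixes an out-neighbour $x$ and an in-neighbour $y$ of the mixed vertex $v$, applies the lemma to $X \setminus \{x\}$ to conclude that $v$ has $0$, $3$, or $6$ out-neighbours there (ruling out $6$, since $v$ would then be out-complete to $X$), and then applies the lemma a second time to $X \setminus \{y\}$, where $v$ now has $N_{X\setminus\{x\}}^+(v) \cup \{x\}$ as its out-neighbourhood, hence $1$ or $4$ out-neighbours --- a contradiction. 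So the paper's argument is purely numerical, using only that the two parts of the degree partition have size $3$ and that deleting any vertex of $P_7$ yields $P_7^-$; it is in effect the streamlined version of the ``combine the constraints from several deletions'' fallback you sketch at the end, needing just two deletions. Your argument instead makes a single, carefully chosen deletion: you verify that the degree partition of $T[X \setminus \{w\}]$ is exactly $(N_X^+(w), N_X^-(w))$, and you use the planar difference-set property of $\{1,2,4\}$ modulo $7$ to find one $w$ whose deletion places a chosen in-neighbour and out-neighbour of $v$ in the same part, so that $v$ is mixed on that part and the lemma applies immediately. Both of your auxiliary claims are correct (the quadratic residues $\{1,2,4\}$ do realize every nonzero difference exactly once, and the degree-partition identification is right), so the proof is sound. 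What your route buys is a one-shot contradiction with a transparent structural reason; what it costs is an appeal to the arithmetic presentation of $P_7$, which the paper's counting argument never needs.
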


\begin{proof}
    Suppose to the contrary that there is a vertex $v \in V(T)$ which is mixed on $X$.
    Fix an out-neighbour $x$ of $v$ in $X$ and let $A = X \setminus \{x\}$.
    Since $A$ induces a copy of $P_7^-$ in $T$, by~\Cref{Lemma:no-mixed-on-C6}, it follows that $v$ has zero, three, or six out-neighbours in $A$. 
    However, if $v$ has six out-neighbours in $A$, then $x$ is not mixed on $X$. 
    Thus, we may assume that $v$ has zero or three out-neighbours in $A$. 
    Take an in-neighbour $y$ of $v$ in $X$ and let $B = X \setminus \{y\}$.
    Observe that $N_B^+(v) = N_A^+(v) \cup \{x\}$, so $v$ has one or four out-neighbours in $B$.
    However, since $B$ induces a copy of $P_7^-$ in $T$, we reach a contradiction by~\Cref{Lemma:no-mixed-on-C6}.
    Therefore, we conclude that $X$ is a homogeneous set.
\end{proof}

Let us now show that applying a substitution or a $P_7^-$-join to a vertex or an edge satisfying specific conditions does not break being $\Delta(1, 2, 2)$-free.
First, we consider the situation when we substitute a basic tournament, which is a relatively simple case.
Indeed, we prove a stronger statement: that being $\Delta(1, 2, 2)$-free does not change if we substitute any tournament with at least two vertices for a nice vertex, and this is indeed an equivalent condition.

\begin{lemma} \label{Lemma:nice-vertices}
    Let $T$ be a $\Delta(1, 2, 2)$-free tournament, let $v \in V(T)$, and let 
    $S$ be a $\Delta(1,2,2)$-free tournament with at least two vertices.
    Then a tournament obtained from $T$ by substituting $S$ for $v$ is~$\Delta(1, 2, 2)$-free if and only if $v$ is nice.
\end{lemma}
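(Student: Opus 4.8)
The plan is to prove both implications, using throughout the observation recorded just before the substitution definition: if $T'$ denotes the tournament obtained from $T$ by substituting $S$ for $v$, then $W := V(S)$ is a homogeneous set of $T'$, and $V(T') \setminus W$ may be identified with $V(T) \setminus \{v\}$ in a way that preserves all adjacencies and makes every external vertex relate to all of $W$ exactly as it related to $v$ in $T$. The whole argument rests on a single structural fact about the target, which I would establish first: the only nontrivial homogeneous sets of $\Delta(1,2,2)$ are its two ``doubled'' parts, namely the two pairs obtained by blowing up the two substituted vertices of the cyclic triangle. This follows from a short direct check, or from the fact that the quotient $C_3$ is prime while the two size-$2$ parts are the only maximal proper modules. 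I also use that $\Delta(1,2,2)$ is invariant under reversing all edges.

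For the ``only if'' direction I argue contrapositively: assuming $v$ is not nice, I produce a copy of $\Delta(1,2,2)$ in $T'$. By definition there are distinct $x, y_1, y_2$ with $\{v, x, y_1\}$ and $\{v, x, y_2\}$ both cyclic. Since both triangles use the edge between $v$ and $x$, the reverse-invariance of $\Delta(1,2,2)$ lets me assume $v \to x$, $x \to y_i$, and $y_i \to v$ for $i \in \{1,2\}$. Choosing any two vertices $s_1, s_2 \in W$ (this is the one place I use $|S| \ge 2$), the substitution rule yields $s_j \to x$, $x \to y_i$, and $y_i \to s_j$ for all $i,j$, so $\{x\}$, $\{y_1, y_2\}$, $\{s_1, s_2\}$ are the parts of a blow-up $x \Rightarrow \{y_1,y_2\} \Rightarrow \{s_1,s_2\} \Rightarrow x$ of the cyclic triangle with sizes $1,2,2$; the internal orientations of the two pairs are irrelevant, and the five vertices are distinct because $s_1,s_2 \in W$ while $x,y_1,y_2 \notin W$. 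This direction needs neither $T$ nor $S$ to be $\Delta(1,2,2)$-free.

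For the ``if'' direction, suppose $v$ is nice and $T, S$ are $\Delta(1,2,2)$-free, yet $T'$ contains a copy of $\Delta(1,2,2)$ on a vertex set $Q$. The key step is that $Q \cap W$ is a homogeneous set of $T'[Q] \cong \Delta(1,2,2)$: any $u \in Q \setminus W$ satisfies $u \Rightarrow W$ or $W \Rightarrow u$, hence relates uniformly to $Q \cap W$. By the structural fact, $Q \cap W$ is empty, a singleton, one of the two doubled pairs, or all of $Q$. If $Q \cap W = \emptyset$ then $Q \subseteq V(T)\setminus\{v\}$ and $T'[Q] = T[Q]$ is a copy of $\Delta(1,2,2)$ in $T$, a contradiction; if $Q \subseteq W$ then $S$ contains $\Delta(1,2,2)$, again a contradiction; if $Q \cap W = \{w\}$, then replacing $w$ by $v$ gives, via the substitution rule, a copy of $\Delta(1,2,2)$ in $T$ on $(Q\setminus\{w\}) \cup \{v\}$, once more a contradiction.

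The remaining case, where $Q \cap W$ is one of the two doubled pairs, is the crux and the one that invokes niceness. Writing the copy as $a \Rightarrow Y \Rightarrow Z \Rightarrow a$ with the doubled pair equal to $Y$ (the case $Z$ being symmetric after reversal), the vertices $a$ and $Z = \{z_1,z_2\}$ are external, and since $Y \subseteq W$ the substitution rule translates $a \Rightarrow Y$, $Y \Rightarrow Z$, $Z \Rightarrow a$ into $a \to v$, $v \to z_1$, $v \to z_2$, $z_1 \to a$, $z_2 \to a$ in $T$. Then $\{v,a,z_1\}$ and $\{v,a,z_2\}$ are cyclic triangles sharing the pair $v,a$, contradicting that $v$ is nice. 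As every case gives a contradiction, $T'$ is $\Delta(1,2,2)$-free. I expect the main obstacle to be exactly this last case together with the classification of the homogeneous sets of $\Delta(1,2,2)$: once those are in hand, recovering the non-niceness certificate is bookkeeping, and the remaining cases are routine reductions to the hypotheses that $T$ and $S$ are $\Delta(1,2,2)$-free.
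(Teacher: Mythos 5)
Your proof is correct and follows essentially the same route as the paper's: the forward direction builds the copy $x \Rightarrow \{y_1,y_2\} \Rightarrow \{s_1,s_2\} \Rightarrow x$ from a non-niceness certificate plus two vertices of the substituted set, and the converse intersects the copy with the homogeneous set $W$, classifies the intersection via the homogeneous sets of $\Delta(1,2,2)$, and reduces each case to the freeness of $T$ or $S$ or to a violation of niceness. The only difference is cosmetic: you spell out the classification of homogeneous sets of $\Delta(1,2,2)$ and the orientation checks that the paper leaves implicit, and you split the empty/singleton cases that the paper handles together via the bound $\lvert C \cap Z\rvert \geq 2$.
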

\begin{proof}
    Let $T'$ be a tournament obtained from $T$ by substituting $S$ for $v$.
    We prove the statement by its contrapositive, that is, the tournament $T'$ is not $\Delta(1, 2, 2)$-free if and only if $v$ is not nice.
    First, suppose that $v$ is not a nice vertex.
    Take three distinct vertices $x, y_1, y_2$ in $T$ such that both~$\{v, x, y_1\}$ and~$\{v, x, y_2\}$ induce cyclic triangles.
    Let $Y$ be a homogeneous set in $T'$ that is created by substituting~$S$ for $v$ in $T$, and let $z_1, z_2 \in Y$ be two distinct vertices.
    Then the set $\{x, y_1, y_2, z_1, z_2\}$ induces a copy of $\Delta(1, 2, 2)$ in $T'$. 
    
    To show the converse, suppose that $T'$ contains a copy of $\Delta(1, 2, 2)$.
    Let $Z = \{x, y_1, y_2, z_1, z_2\}$ be the set of five distinct vertices in $T'$ such that $x \Rightarrow \{y_1, y_2\} \Rightarrow \{z_1, z_2\} \Rightarrow x$, and let $C$ be a homogeneous set in $T'$ which is created by substituting $S$ for $v \in V(T)$.
    Since $T$ is $\Delta(1, 2, 2)$-free, we have $\lvert C \cap Z \rvert \geq 2$; since $C$ is a homogeneous set in $T'$ and $T'[C] = S$ is $\Delta(1,2,2)$-free, either~$C \cap Z = \{y_1, y_2\}$ or $C \cap Z = \{z_1, z_2\}$ holds.
    However, then the vertices in the set $(C \setminus Z) \cup \{v\}$ show that $v$ is not nice in $T$.
\end{proof}

Dealing with a $P_7^-$-join requires more effort.
Before starting the proof, we give another description of bridges, which uses the information about the neighbours of their ends.
We omit the proof as it is straightforward from the definition.

\begin{proposition}
    Let $T$ be a tournament and let $uv \in E(T)$.
    Define four sets $X = N^-(u) \cap N^-(v)$, $Y = N^-(u) \cap N^+(v)$, $Z = N^+(u) \cap N^+(v)$, and $W = N^+(u) \cap N^-(v)$.
    Then $uv$ is a bridge if and only if it satisfies the following conditions:

    \begin{enumerate}[label = (B\arabic*)]
        \item The set $W$ is empty; \label{Cond:Bridge1} 
        \item for each $(P,Q) \in \{(X, Y \cup Z), (X \cup Y, Z)\}$, there is no $x \in P$ with at least two distinct in-neighbours in $Q$; and \label{Cond:Bridge2} 
        \item  for each $(P,Q) \in \{(X, Y \cup Z), (X \cup Y, Z)\}$, there is no $x \in Q$ with at least two distinct out-neighbours in $P$. \label{Cond:Bridge3}
    \end{enumerate}
\end{proposition}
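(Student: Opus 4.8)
The plan is to read off both directions from the ordering forced by the isolated-edge requirement. The starting observation is that, since $uv \in E(T)$, the four sets partition $V(T) \setminus \{u, v\}$ according to adjacency with $u$ and $v$: vertices of $X$ point to both $u$ and $v$, vertices of $Z$ are pointed to by both, each $y \in Y$ satisfies $v \Rightarrow y \Rightarrow u$, and each $w \in W$ satisfies $u \Rightarrow w \Rightarrow v$. For the forward direction, suppose $uv$ is a bridge, witnessed by $\sigma = (v_1, \ldots, v_n)$ with $u = v_i$, $v = v_j$, and $i > j$. I would first unpack the requirement that $uv$ is isolated in $B_\sigma(T)$: since $u$ has degree $1$ in $B_\sigma(T)$ with its only backedge going to $v$, every in-neighbour of $u$ must precede $u$ and every out-neighbour of $u$ other than $v$ must follow it, and symmetrically for $v$. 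Translating into the four sets, $X$ precedes $v$ (position $< j$), $Z$ follows $u$ (position $> i$), and $Y$ lies strictly between $v$ and $u$. A vertex of $W$ would have to follow $u$ and precede $v$ simultaneously, which is impossible since $j < i$; this forces $W = \emptyset$, giving \ref{Cond:Bridge1}, and pins the ordering to the shape $[X]\, v\, [Y]\, u\, [Z]$.

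Next I would identify the backedges crossing the two cuts. At the cut at position $i$ (across $u$) the two sides are $X \cup \{v\} \cup Y$ and $Z$; since $v \Rightarrow Z$ there is no edge from $Z$ to $v$, so the crossing backedges are exactly the edges directed from $Z$ into $X \cup Y$. These form a matching precisely when no vertex of $Z$ has two out-neighbours in $X \cup Y$ and no vertex of $X \cup Y$ has two in-neighbours in $Z$, which is \ref{Cond:Bridge2} and \ref{Cond:Bridge3} for the pair $(X \cup Y, Z)$. Symmetrically, at the cut at position $j$ (across $v$) the two sides are $X$ and $Y \cup \{u\} \cup Z$; since $X \Rightarrow u$ the crossing backedges are exactly the edges directed from $Y \cup Z$ into $X$, and the matching condition here is \ref{Cond:Bridge2} and \ref{Cond:Bridge3} for the pair $(X, Y \cup Z)$. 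This completes the forward direction.

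For the converse, given \ref{Cond:Bridge1}--\ref{Cond:Bridge3}, I would take the ordering $[X]\, v\, [Y]\, u\, [Z]$ with arbitrary internal orderings and verify that it witnesses a bridge. Because $W = \emptyset$, every out-neighbour of $u$ other than $v$ lies in $Z$ (after $u$) and every in-neighbour of $u$ lies in $X \cup Y$ (before $u$), so $uv$ is the unique backedge at $u$; the analogous check at $v$ shows $uv$ is isolated. The backedges crossing the two cuts are again the edges from $Z$ into $X \cup Y$ and from $Y \cup Z$ into $X$, which form matchings by \ref{Cond:Bridge2} and \ref{Cond:Bridge3} applied to $(X \cup Y, Z)$ and $(X, Y \cup Z)$ respectively. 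Hence $uv$ is a bridge.

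The main obstacle is the bookkeeping in the middle step: correctly determining which edges cross each of the two cuts and confirming that no other backedges intervene. The two facts $v \Rightarrow Z$ and $X \Rightarrow u$ are exactly what rule out spurious crossing edges at the two cuts, and the only genuine subtlety is matching each cut condition to the correct pair among $(X, Y \cup Z)$ and $(X \cup Y, Z)$; once the forced position structure is established, everything else is a direct reading of the definition, which is why the authors regard it as straightforward.
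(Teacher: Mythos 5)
Your proof is correct: the paper omits the proof of this proposition as ``straightforward from the definition,'' and your argument is exactly that intended verification. Forcing the ordering into the shape $[X]\, v\, [Y]\, u\, [Z]$ from the isolated-edge requirement (which simultaneously kills $W$), and then identifying the backedges crossing the cuts at $u$ and at $v$ with the edges from $Z$ into $X \cup Y$ and from $Y \cup Z$ into $X$ respectively, matches the conditions \ref{Cond:Bridge1}--\ref{Cond:Bridge3} precisely in both directions.
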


In particular, this implies the following.

\begin{corollary} \label{Corollary:bridge-ends}
    Let $T$ be a tournament and let $uv \in E(T)$ be a bridge.
    Then $u$ is a nice vertex in~$T \setminus v$, and vice versa.
\end{corollary}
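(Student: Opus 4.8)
The plan is to bypass the ordering-based definition of a bridge entirely and work instead with the preceding proposition, which characterizes bridges through the four sets $X = N^-(u)\cap N^-(v)$, $Y = N^-(u)\cap N^+(v)$, $Z = N^+(u)\cap N^+(v)$, and $W = N^+(u)\cap N^-(v)$. Since $uv$ is a bridge, condition~\ref{Cond:Bridge1} gives $W=\emptyset$, so $X,Y,Z$ partition $V(T)\setminus\{u,v\}$. Reading off neighbourhoods, in $T\setminus v$ the in-neighbours of $u$ are exactly $X\cup Y$ and its out-neighbours are exactly $Z$; symmetrically, in $T\setminus u$ the in-neighbours of $v$ are exactly $X$ and its out-neighbours are exactly $Y\cup Z$. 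I would record these two dictionaries first, as all the case analysis is just bookkeeping on top of them.

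To show $u$ is nice in $T\setminus v$, I would argue by contradiction: assume there are distinct $x,y_1,y_2\in V(T)\setminus\{u,v\}$ such that both $\{u,x,y_1\}$ and $\{u,x,y_2\}$ induce cyclic triangles. The two triangles share the edge between $u$ and $x$, so its orientation is the same in both, giving two cases. If $u\to x$, then each cyclic triangle forces $x\to y_i\to u$, so $x\in Z$ is an out-neighbour of $u$ while $y_1,y_2\in X\cup Y$ are two distinct out-neighbours of $x$ lying in $X\cup Y$; this contradicts condition~\ref{Cond:Bridge3} for the pair $(X\cup Y, Z)$. If instead $x\to u$, then each triangle forces $u\to y_i\to x$, so $x\in X\cup Y$ has the two distinct in-neighbours $y_1,y_2\in Z$, contradicting condition~\ref{Cond:Bridge2} for the same pair $(X\cup Y, Z)$.

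The argument that $v$ is nice in $T\setminus u$ is entirely symmetric, but now it is the other pair $(X, Y\cup Z)$ that does the work. Running the same two cases for a hypothetical pair of cyclic triangles through $v$ and $x$: the orientation $v\to x$ produces a vertex $x\in Y\cup Z$ with two distinct out-neighbours in $X$, contradicting condition~\ref{Cond:Bridge3} for $(X, Y\cup Z)$, while the orientation $x\to v$ produces a vertex $x\in X$ with two distinct in-neighbours in $Y\cup Z$, contradicting condition~\ref{Cond:Bridge2} for $(X, Y\cup Z)$.

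I do not expect a genuine obstacle here: once the bridge hypothesis is translated into the $X,Y,Z,W$ language, the proof is pure bookkeeping, and the statement ``and vice versa'' follows from the manifest symmetry between the two pairs. The only point that requires care is to determine, for each orientation of the edge between $u$ (respectively $v$) and $x$, on which side of the cyclic triangle the vertices $y_1,y_2$ must lie, and then to match that configuration to the correct pair $(P,Q)$ and to condition~\ref{Cond:Bridge2} versus~\ref{Cond:Bridge3}. It is worth emphasizing that the proposition lists exactly the two pairs $(X,Y\cup Z)$ and $(X\cup Y, Z)$ precisely so that each of these four configurations is eliminated by a single condition.
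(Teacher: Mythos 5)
Your proof is correct and follows essentially the same route as the paper: the paper states this corollary as an immediate consequence of the preceding proposition characterizing bridges via conditions \labelcref{Cond:Bridge1}--\labelcref{Cond:Bridge3}, and your case analysis (matching each orientation of the shared edge to a violation of \labelcref{Cond:Bridge2} or \labelcref{Cond:Bridge3} for the appropriate pair $(P,Q)$) is exactly the bookkeeping the paper leaves implicit.
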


We also remark the following.
Even if the edge $uv$ does not satisfy~\labelcref{Cond:Bridge1}, it may happen that applying a $P_7^-$-join to $uv$ does not create a copy of $\Delta(1, 2, 2)$.
However, a vertex in the set $W$ enforces that this operation creates a copy of $P_7$, so the obtained tournament can be generated from another~$\Delta(1, 2, 2)$-free tournament by substituting $P_7$.
To avoid this situation, we focus on the situation where the $P_7^-$-join operation creates a homogeneous pair that is not included in a homogeneous set isomorphic to $P_7$.
Given a $\Delta(1, 2, 2)$-free tournament $T$ and a homogeneous pair~$(A_1, A_2)$ such that~$A_1 \cup A_2$ induces a copy of $P_7^-$, we say $(A_1, A_2)$ is \emph{nested} if there is $X \subseteq V(T)$ such that $A_1 \cup A_2 \subseteq X$ and $T[X]$ is isomorphic to $P_7$.

\begin{lemma}\label{Lemma:bridge-edge}
    Let $T$ be a $\Ptwo$-free tournament and let $uv \in E(T)$.
    Let $\tilde{T}$ be a tournament obtained from $T$ by applying a $P_7^-$-join to $uv$, and let $(A_1, A_2)$ be the homogeneous pair of $\tilde{T}$ created by this operation.
    Then $\tilde{T}$ is $\Delta(1, 2, 2)$-free and $(A_1, A_2)$ is not nested if and only if $uv$ is a bridge.
\end{lemma}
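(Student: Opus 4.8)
The plan is to recast both directions through the partition of the outside vertices induced by the ends of the edge. Write $u \Rightarrow v$ and, following the preceding proposition characterising bridges, partition $V(T) \setminus \{u,v\}$ into $X = N^-(u) \cap N^-(v)$, $Y = N^-(u) \cap N^+(v)$, $Z = N^+(u) \cap N^+(v)$, and $W = N^+(u) \cap N^-(v)$. The $P_7^-$-join replaces $u$ by the out-degree-$2$ flock $A_1 = D_2$ and $v$ by the out-degree-$3$ flock $A_2 = D_1$, and an outside vertex keeps its adjacency to $A_1$ (resp.\ $A_2$) equal to its adjacency to $u$ (resp.\ $v$); hence a vertex of $X,Y,Z,W$ relates to the ordered pair $(A_1,A_2)$ as $(\Rightarrow,\Rightarrow)$, $(\Rightarrow,\Leftarrow)$, $(\Leftarrow,\Leftarrow)$, $(\Leftarrow,\Rightarrow)$, respectively. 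I will also use the explicit shape of the degree partition of $P_7^-$ read off from \Cref{Figure:P_7^-}: each of $T[A_1]$ and $T[A_2]$ is a cyclic triangle, the three edges directed from $A_1$ to $A_2$ form a perfect matching, and the remaining six edges go from $A_2$ to $A_1$. Two observations then drive everything. First, $(A_1,A_2)$ is \emph{nested} if and only if $W \neq \emptyset$: since the degree partition of $\tilde T[A_1 \cup A_2] \cong P_7^-$ is intrinsic, any completion to a copy of $P_7$ is forced to adjoin a vertex $w$ whose in-neighbourhood is $A_1$ and out-neighbourhood is $A_2$, i.e.\ a vertex with $A_1 \Rightarrow w \Rightarrow A_2$, which is exactly a vertex of $W$; conversely every such $w$ completes the copy to $P_7$. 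Thus ``$(A_1,A_2)$ not nested'' is precisely condition (B1). Second, when $uv$ is a bridge, \Cref{Corollary:bridge-ends} gives that $v$ is nice in $T \setminus u$ and $u$ is nice in $T \setminus v$; since $\tilde T \setminus A_1$ and $\tilde T \setminus A_2$ are obtained from $T \setminus u$ and $T \setminus v$ by substituting the cyclic triangles $T[A_2]$ and $T[A_1]$ for $v$ and $u$, \Cref{Lemma:nice-vertices} shows that both $\tilde T \setminus A_1$ and $\tilde T \setminus A_2$ are $\Delta(1,2,2)$-free.

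For the forward implication I argue by contrapositive: if $uv$ is not a bridge, one of (B1), (B2), (B3) fails. If (B1) fails then $W \neq \emptyset$, so $(A_1,A_2)$ is nested by the first observation. If a matching condition (B2) or (B3) fails, I exhibit a copy of $\Delta(1,2,2)$ directly: the vertex violating the condition serves as the apex $a$ together with the two outside vertices witnessing the violation as one flock, while any two vertices of the appropriate $A_i$ form the other flock; the translation table above makes each of the four failure modes yield the required pattern $a \Rightarrow \{b_1,b_2\} \Rightarrow \{c_1,c_2\} \Rightarrow a$ immediately (this uses only $|A_i| \geq 2$ and the type information, not the emptiness of $W$).

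For the backward implication, assume $uv$ is a bridge. Then $(A_1,A_2)$ is not nested by the first observation together with (B1), and it remains to show $\tilde T$ is $\Delta(1,2,2)$-free. Suppose not, and let $R$ be the vertex set of a copy. By the second observation $R$ meets both $A_1$ and $A_2$, and since $P_7^-$ is $\Delta(1,2,2)$-free, $R$ also contains an outside vertex. If $R$ met some $A_i$ in three vertices, these would induce the cyclic triangle $T[A_i]$ and, by homogeneity, would all agree on the (unique, in this profile) outside vertex of $R$; but no cyclic triple of $\Delta(1,2,2)$ agrees on a common further vertex, a contradiction, so $|R \cap A_i| \leq 2$. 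This leaves the four profiles $(|R\cap A_1|,|R\cap A_2|,|R \setminus (A_1\cup A_2)|) \in \{(1,1,3),(1,2,2),(2,1,2),(2,2,1)\}$. For each profile I dispose of every placement of the vertices into $\{a\}, \{b_1,b_2\}, \{c_1,c_2\}$ using three mechanisms: counting against the $A_1$-to-$A_2$ matching (a flock forced to send two edges across, or two edges into a single target, is impossible since each $A_1$-vertex emits exactly one edge to $A_2$ and each $A_2$-vertex receives exactly one); excluding any placement that forces an outside vertex to have type $W$, contradicting (B1); and, in the $(1,1,3)$ profile, either projecting the unique $A_1$- and $A_2$-vertices to $u$ and $v$ to obtain a copy of $\Delta(1,2,2)$ inside $T$ (when the edge between them is oriented like $u \Rightarrow v$, so that the projection preserves all adjacencies) or, when it is oriented the other way, reading off from the outside vertices a pair violating (B2) or (B3), with the remaining placements once more forcing a vertex into $W$.

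The main obstacle is this backward case analysis. The projection trick succeeds only when the collapse $A_1 \mapsto u$, $A_2 \mapsto v$ keeps the copy intact, which fails exactly when a flock of $\Delta(1,2,2)$ is spread across $A_1$ and $A_2$ or is merged by the quotient; the real work is to verify that every such degenerate placement is blocked by one of the three mechanisms. What makes the bookkeeping close is the rigidity of the $P_7^-$ degree partition—two cyclic triangles joined by a perfect matching—combined with the emptiness of $W$, which together make the edge counts tight enough to eliminate all remaining configurations.
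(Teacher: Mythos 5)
Your proposal is correct and takes essentially the same approach as the paper: the same contrapositive forward direction (a $W$-vertex yields nestedness, a \labelcref{Cond:Bridge2}/\labelcref{Cond:Bridge3} violation yields an explicit copy of $\Delta(1,2,2)$ through a flock), and the same backward skeleton classifying $(\lvert R\cap A_1\rvert,\lvert R\cap A_2\rvert)$, settling the profiles that miss a flock via \Cref{Corollary:bridge-ends} and \Cref{Lemma:nice-vertices}, the full-flock profiles via homogeneity against a cyclic triangle dominated by a further vertex of the copy, and the remaining profiles via the flock matching, the emptiness of $W$, and the projection onto $uv$. Your only local departures from the paper — finishing the reversed-edge $(1,1,3)$ case by reading off a \labelcref{Cond:Bridge2}/\labelcref{Cond:Bridge3} violation where the paper derives a niceness contradiction, and killing the profiles meeting a flock twice by systematic matching counts and $W$-exclusion where the paper argues case by case — are sound, and the bookkeeping you defer does close: every placement in the $(2,2,1)$, $(1,2,2)$, $(2,1,2)$ profiles is indeed blocked by matching counting, $W$-exclusion, or the constraint that vertices of one flock agree on every outside vertex.
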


\begin{proof}
    Without loss of generality, let $A_1 = \{a_1, a_2, a_3\}$ and $A_2 = \{a_4, a_5, a_6\}$ so that $(a_1, a_2, \ldots, a_6)$ is the canonical ordering of the copy of $P_7^-$.
    Let $A = A_1 \cup A_2$.
    For one direction, suppose that the edge $uv$ is not a bridge.
    If $uv$ violates~\labelcref{Cond:Bridge1}, then the vertices in $A$ together with a vertex in $W$ induce a copy of $P_7$, so $(A_1, A_2)$ is nested.
    Thus, assume that $uv$ satisfies~\labelcref{Cond:Bridge1}.
    We only show that $\tilde{T}$ contains~$\Delta(1, 2, 2)$ when $uv$ violates~\labelcref{Cond:Bridge2} as a similar consideration works when $uv$ violates~\labelcref{Cond:Bridge3}.

    Suppose that $uv$ does not satisfy~\labelcref{Cond:Bridge2}.
    Take $(P, Q) \in \{(X, Y \cup Z), (X \cup Y, Z)\}$ and a vertex $x \in P$ that has two distinct in-neighbours in $Q$.
    Let $y_1$ and $y_2$ be such in-neighbours of $x$ in $Q$.
    If $x \in X$, then $x \Rightarrow \{a_1, a_2\} \Rightarrow \{y_1, y_2\} \Rightarrow x$; if $x \in Y$, then $x \Rightarrow \{a_4, a_5\} \Rightarrow \{y_1, y_2\} \Rightarrow x$.
    This shows that $\tilde{T}$ contains $\Delta(1, 2, 2)$ and we reach a contradiction.

    For the other direction, suppose that the edge $uv$ satisfies~\labelcref{Cond:Bridge1},~\labelcref{Cond:Bridge2},~and~\labelcref{Cond:Bridge3}.
    The homogeneous pair $(A_1, A_2)$ is not nested by~\labelcref{Cond:Bridge1}, so we only need to show that $\tilde{T}$ is $\Delta(1, 2, 2)$-free.
    Suppose for a contradiction that 
    there is $X \subseteq V(\tilde{T})$ such that $\tilde{T}[X]$ is isomorphic to $\Delta(1, 2, 2)$.
    Let~$X = \{x, y_1, y_2, z_1, z_2\}$ so that $x \Rightarrow \{y_1, y_2\} \Rightarrow \{z_1, z_2\} \Rightarrow x$ in $T$.
    Let $t_1 = \lvert X \cap A_1\rvert$ and $t_2 = \lvert X \cap A_2 \rvert$.
    Then~$0 \leq t_1, t_2 \leq 3$ and $t_1 + t_2 \leq 5$.

    We consider the following cases according to the values of $t_1$ and $t_2$.

    \begin{customcase}{1} \label{Case:no-intersection}
        Either $t_1 = 0$ or $t_2 = 0$.
    \end{customcase}

    Without loss of generality, assume that $t_2 = 0$.
    We may assume that $t_1 \geq 2$ as otherwise, it implies that $T$ contains $\Delta(1, 2, 2)$.
    Let $S_1 = \tilde{T}\setminus A_2$.
    Then $X \subseteq V(S_1)$ and $A_1$ is a homogeneous set of $S_1$, and $S_1$ is obtained from $T \setminus u$ by substituting a cyclic triangle for $v$. However, by
    \Cref{Corollary:bridge-ends}, $v$ is a nice vertex in $T \setminus u$, and so by \Cref{Lemma:nice-vertices}, we have that $S_1$ is $\Delta(1, 2, 2)$-free, a contradiction.

    \begin{customcase}{2} \label{Case:whole-intersection}
        Either $t_1 = 3$ or $t_2 = 3$.
    \end{customcase}
    
    Without loss of generality, assume that $t_2 = 3$.
    Since $P_7^-$ is $\Delta(1, 2, 2)$-free, we have $t_1 \leq 1$; by \Cref{Case:no-intersection}, we assume that $t_1 = 1$.
    Let $v$ be the vertex in $X \setminus (A_1 \cup A_2)$.
    Since $A_2$ is a homogeneous set in $T \setminus A_1$, either the out-neighbourhood or the in-neighbourhood of $v$ contains $A_2$, which induces a cyclic triangle.
    However, since every vertex in $\Delta(1, 2, 2)$ has a transitive out-neighbourhood and in-neighbourhood, we reach a contradiction.

    \begin{customcase}{3} \label{Case:both-intersecting-once}
        $t_1 = 1$ and $t_2 = 1$.
    \end{customcase}
        Let $X \cap A_1 = \{u_1\}$ and $X \cap A_2 = \{u_2\}$. 
        Let $T' = \tilde{T} \setminus ((A_1 \cup A_2) \setminus X))$; it follows that $X \subseteq V(T')$. 
        If $u_2u_1 \in E(\tilde{T})$, then $T'$ is isomorphic to $T$, and so $T$ is not $\Delta(1,2,2)$-free, a contradiction. 
        It follows that $u_1u_2 \in E(\tilde{T})$. 
        Since $uv$ is a bridge in $T$, it follows that $u_1u_2$ is not contains in a cyclic triangle in $T'$. 
        Only two edges of $\Delta(1, 2, 2)$ are not in cyclic triangles; so we have~$\{u_1, u_2\} = \{y_1, y_2\}$ or~$\{u_1, u_2\} = \{z_1, z_2\}$. 
        By symmetry, we may assume the former and that $u_i = y_i$ for $i \in \{1, 2\}$. 
        Then the tournament $T[(X \setminus \{u_1, u_2\}) \cup \{v\}]$ is contained in $T \setminus u$ and isomorphic to $\tilde{T}[X \setminus \{y_2\}]$, mapping~$v$ to $u_1 = y_1$ and all other vertices via the identity map. 
        However, the vertex $u_1 = y_1$ is not nice in~$\tilde{T}[X \setminus \{y_2\}]$, and so the vertex $v$ is not nice in $T \setminus u$, contradicting~\Cref{Corollary:bridge-ends}.

    \begin{customcase}{4}
        Either $t_1=2$ or $t_2=2$.
    \end{customcase}
        Without losing generality, assume that $t_2 = 2$.
        By~\Cref{Case:no-intersection} and~\Cref{Case:whole-intersection}, we assume that $1 \leq t_1 \leq 2$.

    \begin{customsubcase}{4.1}
        $t_1 = 2$.
    \end{customsubcase}
        Let $w$ be the vertex in $X \setminus A$.
        Since $P_7^-$ does not contain the $4$-vertex transitive tournament, we have $w \neq x$. Since $\Delta(1, 2, 2)$ is strongly connected, it follows that $A_1 \Rightarrow \{w\} \Rightarrow A_2$. By symmetry, we may assume that $x \in A_1.$ Since $x$ has exactly one in-neighbour in $\{w\} \cup A_2$, it follows that $z_1 \in A_1$ and $z_2 \in A_2$, where $z_1z_2 \in E(\Delta(1,2,2))$. However, since $A_1 \Rightarrow \{w\}$, it follows that $z_1$ has at most one in-neighbour $X \setminus \{x\}$, a contradiction.  

    \begin{customsubcase}{4.2}
        $t_1 = 1$. 
    \end{customsubcase}

    If $A_1 \cap X \Rightarrow A_2 \cap X$, then neither of the edges from $A_1 \cap X$ to $A_2 \cap X$ are contained in a cyclic triangle in $\tilde{T}$. 
    However, the edges of $\Delta(1,2,2)$ not contained in a cyclic triangle form a matching, a contradiction. 
    It follows that there are vertices $u_i \in A_i \cap X$ for $i \in \{1, 2\}$ such that $u_2u_1 \in E(\tilde{T})$. Let~$u_2' \in (A_2 \cap X) \setminus \{u_2\}$. 

    Since $\{u_1, u_2, u_2'\}$ is not a homogeneous set $\tilde{T}[X]$ (because $\Delta(1,2,2)$ has no homogeneous set of size 3), it follows that $X$ contains a vertex $w$ which is mixed on $u_1, u_2, u_2'$; so $\{u_1\} \Rightarrow \{w\} \Rightarrow \{u_2, u_2'\}$. 
    It follows that $\{u_1, w, u_2\}$ induces a cyclic triangle in $\tilde{T}[X]$, and therefore $u_2' \neq x$ (because $\tilde{T}[X] \setminus x$ is transitive). 
    Moreover, if $u_2u_2' \in E(\tilde{T})$, then in $\tilde{T}[X]$, the vertex $u_2'$ has a cyclic triangle among its in-neighbours, a contradiction. 
    We conclude that $\{u_1\} \Rightarrow \{w, u_2'\} \Rightarrow \{u_2\} \Rightarrow \{u_1\}$, and therefore,~$x \in \{u_1, u_2\}$ and $\{w, u_2'\}$ is a homogeneous set in $X$.  
    Let $w' \in X \setminus \{w, u_1, u_2, u_2'\}$. Since $\Delta(1,2,2)$ is strongly connected, and since $w'$ is not mixed on $\{w, u_2'\}$, it follows that $A_1 \rightarrow \{w, w'\} \rightarrow A_2$. 
    This implies that $u_1$ has three out-neighbours in $X$ and $u_2$ has three in-neighbours in $X$, contrary to the fact that one of them is $x$. This concludes Subcase 4.2. 

    We have shown a contradiction in each cases, so we conclude that $\tilde{T}$ is $\Delta(1, 2, 2)$-free.
\end{proof}

In summary, we prove the following theorem in this section.

\begin{theorem} \label{Thm:homogeneous-main-theorem}
    Let $T$ be a $\Delta(1, 2, 2)$-free tournament and let $X \subseteq V(T)$.
    Then the following statements hold.

    \begin{enumerate}
        \item If $T[X]$ is isomorphic to $T_5$ or $P_7$, then $X$ is a homogeneous set.
        \item If $T[X]$ is isomorphic to $P_7^-$, then the degree partition of $T[X]$ is a homogeneous pair of $T$.
        \item Substituting a basic tournament for a nice vertex or applying a $P_7^-$-join to a bridge in $T$ results in a $\Delta(1, 2, 2)$-free tournament. \label{item:operations-preserving-P2free}
    \end{enumerate}
    Moreover, if $T$ contains a basic tournament, then $T$ can be constructed from a smaller $\Delta(1, 2, 2)$-free tournament by substituting a basic tournament for a nice vertex or applying a $P_7^-$-join to a bridge.
\end{theorem}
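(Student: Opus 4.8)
The plan is to treat this theorem as an assembly of the lemmas already proved in this section, with the only genuine work lying in the final ``moreover'' decomposition. For the three numbered items I would simply invoke the corresponding results. Item~(1) is immediate: if $T[X]\cong T_5$ then $X$ is homogeneous by~\Cref{Lemma:homogeneous-C5}, and if $T[X]\cong P_7$ then $X$ is homogeneous by~\Cref{Corollary:homogeneous-C7}. Item~(2) follows from~\Cref{Lemma:no-mixed-on-C6}: if $(A_1,A_2)$ is the degree partition of a copy of $P_7^-$, then no vertex outside $A_1\cup A_2$ is mixed on $A_1$ or on $A_2$, which is exactly the assertion that $A_1$ is a homogeneous set of $T\setminus A_2$ and $A_2$ is a homogeneous set of $T\setminus A_1$, i.e.\ that $(A_1,A_2)$ is a homogeneous pair. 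For item~(3) I would first record that each basic tournament is $\Ptwo$-free and has at least two vertices, so the ``if'' direction of~\Cref{Lemma:nice-vertices} shows that substituting a basic tournament for a nice vertex preserves being $\Ptwo$-free, and the ``if'' direction of~\Cref{Lemma:bridge-edge} shows the same for applying a $P_7^-$-join to a bridge.

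The substance is the ``moreover'' claim, which I would prove by a case analysis on which basic tournament $T$ contains, using the lemmas in their converse (``only if'') direction to certify niceness or bridge-ness. At the outset I would dispose of the degenerate case where a copy of a basic tournament spans all of $V(T)$, by noting that then $T$ is itself basic and no proper decomposition is sought; so assume $T$ is not isomorphic to a basic tournament. First suppose $T$ contains a copy of $T_5$ or of $P_7$, with vertex set $X$. By item~(1), $X$ is a homogeneous set, and since $T$ is not basic, $X\subsetneq V(T)$. Then $T/X$ is an induced subtournament of $T$ (choose any representative of $X$), hence $\Ptwo$-free and strictly smaller, and $T$ is obtained from $T/X$ by substituting $T[X]$ — a basic tournament on at least two vertices — for the representative vertex $x$. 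Because $T$ is $\Ptwo$-free, the ``only if'' direction of~\Cref{Lemma:nice-vertices} forces $x$ to be nice in $T/X$, yielding the desired decomposition.

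It then remains to treat the case that $T$ contains a copy of $P_7^-$ but no copy of $T_5$ or $P_7$. Fix such a copy and let $(A_1,A_2)$ be its degree partition. By item~(2), $(A_1,A_2)$ is a homogeneous pair, and it is not nested, for a nesting would produce a copy of $P_7$ in $T$, contrary to assumption. I would then reconstruct the smaller tournament $J$ by contracting the pair along the edge $uv$, using the general fact recorded in~\Cref{Section:Substitutions} that any tournament containing a homogeneous pair arises from two smaller tournaments by the join operation. Choosing one representative from each of $A_1$ and $A_2$ (with the edge between them giving the correct orientation of $uv$; since $P_7^-$ has edges in both directions between its degree classes, either orientation can be realized) exhibits $J$ as an induced subtournament of $T$, so $J$ is $\Ptwo$-free and smaller, and $T$ is by construction obtained from $J$ by the $P_7^-$-join along $uv$. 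Since $T$ is $\Ptwo$-free and $(A_1,A_2)$ is not nested, the ``only if'' direction of~\Cref{Lemma:bridge-edge} certifies that $uv$ is a bridge, completing the decomposition.

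The main obstacle I anticipate is the bookkeeping in the $P_7^-$-case: one must check that the contraction is consistent with the convention used to define the $P_7^-$-join, in particular that the degree partition $(A_1,A_2)$ is matched to the pair $(V_2,V_1)$ from the definition and that the edge $uv$ is oriented correctly, and that the reconstructed $J$ together with the join genuinely reproduces $T$ rather than merely some tournament sharing the same homogeneous pair. Both verifications should be routine given~\Cref{Corollary:bridge-ends} and the definitions of substitution and of the $P_7^-$-join, but they are where care is needed to avoid an off-by-one mismatch between the two degree classes.
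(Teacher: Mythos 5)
Your proposal is correct and takes essentially the same route as the paper, which presents this theorem as a summary of Section 3: items (1)--(3) are exactly \Cref{Lemma:homogeneous-C5}, \Cref{Corollary:homogeneous-C7}, \Cref{Lemma:no-mixed-on-C6}, \Cref{Lemma:nice-vertices}, and \Cref{Lemma:bridge-edge}, and your ``moreover'' argument (contract the homogeneous set and invoke the converse direction of \Cref{Lemma:nice-vertices}, or reconstruct the join from the non-nested homogeneous pair and invoke the converse direction of \Cref{Lemma:bridge-edge}) is precisely the intended assembly, including the correct matching of the degree partition to the $(V_1,V_2)$ convention via an edge from the out-degree-$2$ class to the out-degree-$3$ class. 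The one loose end is your dismissal of the case where $T$ is itself basic --- as literally stated the ``moreover'' clause still applies there --- but it holds trivially ($T_5$ or $P_7$ is obtained by substituting into the one-vertex tournament, and $P_7^-$ by a $P_7^-$-join on the edge of the two-vertex tournament, niceness and bridgeness being vacuous), so nothing substantive is missing.
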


Therefore, to prove~\Cref{Thm:main-intro}, the remaining thing is to show that every $\Delta(1, 2, 2)$-free, $T_5$-free, and $P_7^-$-free tournament is a paving tournament.
Since $P_7$ contains $P_7^-$, these assumptions also imply that such tournaments are $P_7$-free.

\section{Finding a paving ordering via reshuffling} \label{Section:Finding-Paving-Orderings}

In this section, we prove a lemma that plays a crucial role in finding a paving ordering of a $\Delta(1, 2, 2)$-free tournament.
Roughly speaking, this lemma says that under some assumptions, we can ``reshuffle'' a small number of consecutive vertices in the ordering so that the resulting ordering satisfies the desired conditions.
As an application of this lemma, we show a special case of our main theorem, in particular, when there is a vertex of small in-degree or out-degree.

Let $T$ be a tournament with $n$ vertices and let $\sigma = (v_1, \ldots, v_n)$ be an ordering of $T$.
For $v_i \in V(T)$, a vertex $v_j$ is a \emph{$\sigma$-left-neighbour} (respectively, \emph{$\sigma$-right-neighbour}) of $v_i$ if $j<i$ (respectively, $j>i$) and~$v_i v_j \in E(B_\sigma(T))$.
If the ordering is clear from the context, we only say the vertex $v_j$ is a \emph{left-neighbour} (respectively, \emph{right-neighbour}) of $v_i$.
A vertex $v \in V(T)$ is \emph{paved in $B_\sigma(T)$} if it has at most one $\sigma$-left-neighbour and $\sigma$-right-neighbour.
We also say that $v$ is \emph{paved in $\sigma$} when $v$ is paved in $B_\sigma(T)$.

Given an ordering $\sigma = (v_1, \ldots, v_n)$ of a tournament $T$ and $k$ distinct vertices $v_{i_1}, \ldots, v_{i_k} \in V(T)$, we say an ordering $\sigma' = (v_1', \ldots, v_n')$ of $T$ is obtained from $\sigma$ by \emph{reordering $v_{i_1}, \ldots, v_{i_k}$} if there exists a permutation $\pi : \{i_1, \ldots, i_k\} \to \{i_1, \ldots, i_k\}$ such that $v_j' = v_{\pi(j)}$ for each $j \in \{i_1, \ldots, i_k\}$ and $v_j = v_j'$ if $j \notin \{i_1, \ldots, i_k\}$.
In particular, when $k=2$ and $\pi$ is not the identity function, we say that $\sigma'$ is obtained from $\sigma$ by \emph{swapping $v_{i_1}$ and $v_{i_2}$}.

The following propositions are immediate from the definitions.

\begin{proposition}\label{Proposition:reordering-interval}
    Let $T$ be a tournament with $n$ vertices and $\sigma = (v_1, \ldots, v_n)$ be an ordering of $T$.
    Suppose that there are integers $1 \leq i < j \leq n$ and a vertex $v \in V(T) \setminus \{v_i, \ldots, v_j\}$ such that $v$ is paved in $\sigma$.
    If $\tau$ is an ordering of $T$ obtained from $\sigma$ by reordering $v_i, \ldots, v_j$, then $v$ is also paved in $\tau$.
\end{proposition}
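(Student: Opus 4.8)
The plan is to prove something slightly stronger than stated: that the reordering leaves the entire neighbourhood of $v$ in the backedge graph unchanged, together with its partition into left- and right-neighbours. Since being paved depends only on the sizes of these two neighbour sets, this immediately yields the claim.

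First I would record two elementary consequences of the definition of reordering. Writing $\operatorname{pos}_\sigma(u)$ for the position of a vertex $u$ in an ordering $\sigma$, reordering $v_i, \ldots, v_j$ ensures that (i) every vertex outside the block $B := \{v_i, \ldots, v_j\}$ keeps its position, so in particular $\operatorname{pos}_\tau(v) = \operatorname{pos}_\sigma(v) =: p$; and (ii) the positions $\{i, \ldots, j\}$ are occupied by exactly the vertices of $B$ in both $\sigma$ and $\tau$. Since $v \notin B$, we have $p < i$ or $p > j$.

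The key step is to check that for every vertex $w \neq v$ the relative order of $v$ and $w$ is the same in $\sigma$ and in $\tau$. I would split into two cases. If $w \notin B$, then both $v$ and $w$ retain their positions, so the relative order is trivially unchanged. If $w \in B$, then $\operatorname{pos}(w) \in \{i, \ldots, j\}$ in both orderings, and since $p < i$ or $p > j$ the vertex $v$ lies before or after the entire block in both orderings; hence again the relative order of $v$ and $w$ is unchanged. Because an edge $\{v, w\}$ is a backedge precisely when its orientation in $T$ points from the later vertex to the earlier one, the preserved relative order means that $\{v, w\}$ is a backedge under $\sigma$ if and only if it is under $\tau$, and moreover that $w$ is a left-neighbour (respectively right-neighbour) of $v$ in one ordering if and only if it is in the other.

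Consequently the set of $\sigma$-left-neighbours of $v$ equals the set of $\tau$-left-neighbours, and likewise for right-neighbours; in particular their cardinalities are unchanged. Since $v$ is paved in $\sigma$, it has at most one left-neighbour and at most one right-neighbour, so the same holds in $\tau$, whence $v$ is paved in $\tau$. I do not expect any genuine obstacle here: the whole argument is an unwinding of the definitions, and the only point requiring care is the case $w \in B$, where one must use that $v$ lies \emph{strictly} outside the block to conclude that permuting the block cannot move any block-vertex across $v$.
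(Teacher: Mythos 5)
Your proof is correct and is precisely the definition-unwinding argument the paper has in mind: the paper states this proposition without proof, declaring it immediate from the definitions, and your write-up simply makes that explicit. The key observation—that a vertex strictly outside the reordered block keeps the same relative order with every other vertex, so its left- and right-neighbourhoods in the backedge graph are unchanged—is exactly the intended justification.
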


\begin{proposition} \label{Proposition:paving}
    Let $T$ be a paving tournament with $n$ vertices and $\sigma = (v_1, \ldots, v_n)$ be a paving ordering of $T$.
    Suppose that $v_i v_j \in E(B_\sigma(T))$ for $1 \leq i < j \leq n$.
    Then every vertex $v_t$ such that~$i < t < j$ is adjacent from $v_i$ and adjacent to $v_j$ in $T$.
\end{proposition}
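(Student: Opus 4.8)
The plan is to unwind the definition of a backedge and then use condition~\labelcref{Condition:Paving2} to pin down the neighbourhoods of $v_i$ and $v_j$ in $B_\sigma(T)$. First I would observe that since $i<j$ and $v_iv_j\in E(B_\sigma(T))$, the edge of $T$ between $v_i$ and $v_j$ is the backedge $v_jv_i$; phrased in terms of neighbours, $v_j$ is a $\sigma$-right-neighbour of $v_i$ and $v_i$ is a $\sigma$-left-neighbour of $v_j$. Because $\sigma$ is a paving ordering, every vertex is paved, so~\labelcref{Condition:Paving2} forces $v_j$ to be the \emph{unique} $\sigma$-right-neighbour of $v_i$ and $v_i$ to be the \emph{unique} $\sigma$-left-neighbour of $v_j$.

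Next I would fix $t$ with $i<t<j$ and treat the two relevant pairs separately. For the pair $\{v_i,v_t\}$: since $t>i$, the edge of $T$ between them is either the forward edge $v_iv_t$ or the backedge $v_tv_i$. If it were the backedge, then $v_t$ would be a $\sigma$-right-neighbour of $v_i$ different from $v_j$ (as $t<j$), contradicting uniqueness; hence $v_iv_t\in E(T)$, i.e.\ $v_t$ is adjacent from $v_i$. Symmetrically, for the pair $\{v_t,v_j\}$: since $t<j$, the edge is either the forward edge $v_tv_j$ or the backedge $v_jv_t$, and in the latter case $v_t$ would be a $\sigma$-left-neighbour of $v_j$ different from $v_i$ (as $t>i$), again contradicting uniqueness; hence $v_tv_j\in E(T)$, i.e.\ $v_t$ is adjacent to $v_j$. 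As $t$ was an arbitrary index strictly between $i$ and $j$, this yields the claim.

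The argument is essentially bookkeeping, so I do not expect a serious obstacle; the only point requiring care is the correspondence between an undirected edge of $B_\sigma(T)$ and its orientation in $T$, which is exactly what decides whether an intermediate vertex $v_t$ would be counted as a right-neighbour of $v_i$ or a left-neighbour of $v_j$. It is worth noting that condition~\labelcref{Condition:Paving1} plays no role here: the proof is driven solely by the ``at most one neighbour on each side'' requirement of~\labelcref{Condition:Paving2}.
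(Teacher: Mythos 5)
Your proof is correct: it is precisely the routine unwinding of the backedge-graph definition and condition~(P2) that the paper has in mind when it declares this proposition ``immediate from the definitions'' (the paper gives no written proof). Your closing remark that (P1) is not needed is also accurate.
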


In particular,~\Cref{Proposition:reordering-interval} implies the following.

\begin{lemma} \label{Lemma:only-paving}
	Let $T$ be a tournament, and suppose that there is an ordering $\sigma$ of $T$ such that $B_\sigma(T)$ satisfies~\labelcref{Condition:Paving2} but not~\labelcref{Condition:Paving1}.
	Then $T$ is a paving tournament.
\end{lemma}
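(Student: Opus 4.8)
The plan is to begin from the given ordering $\sigma$, which already satisfies \labelcref{Condition:Paving2}, and to repeatedly destroy the violations of \labelcref{Condition:Paving1} by swapping consecutive adjacent vertices, using the number of edges of the backedge graph as a monovariant. The first thing I would establish is that swapping two consecutive vertices changes the backedge graph in the simplest possible way. If $\tau$ is obtained from $\sigma$ by swapping $v_i$ and $v_{i+1}$, then the relative order of every pair of vertices except $\{v_i, v_{i+1}\}$ is unchanged, so a given unordered pair is a backedge under $\tau$ exactly when it is a backedge under $\sigma$, with the sole exception of $\{v_i, v_{i+1}\}$, whose backedge status flips. Consequently, whenever $v_iv_{i+1} \in E(B_\sigma(T))$ we have $E(B_\tau(T)) = E(B_\sigma(T)) \setminus \{v_iv_{i+1}\}$: the swap simply deletes that one edge and relabels positions.

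Next I would check that such a swap preserves \labelcref{Condition:Paving2}. For every vertex outside $\{v_i, v_{i+1}\}$ this is immediate from \Cref{Proposition:reordering-interval} applied to the interval $v_i, v_{i+1}$. For the two swapped vertices I would argue directly. Since $v_i$ and $v_{i+1}$ are adjacent and consecutive, $v_{i+1}$ is the unique right-neighbour of $v_i$ and $v_i$ is the unique left-neighbour of $v_{i+1}$ under $\sigma$; hence $v_i$ has no neighbour in $\{v_{i+2}, \ldots, v_n\}$ and $v_{i+1}$ has no neighbour in $\{v_1, \ldots, v_{i-1}\}$. After the swap the edge $v_iv_{i+1}$ is gone, so $v_i$ (now in position $i+1$) has no right-neighbour and at most one left-neighbour, while $v_{i+1}$ (now in position $i$) has no left-neighbour and at most one right-neighbour. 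Thus both remain paved, and $B_\tau(T)$ satisfies \labelcref{Condition:Paving2}.

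Finally I would set up the termination argument. Starting from $\sigma$, as long as the current ordering violates \labelcref{Condition:Paving1} there is some consecutive adjacent pair $v_i, v_{i+1}$; swapping them produces a new ordering that still satisfies \labelcref{Condition:Paving2} but whose backedge graph has exactly one fewer edge. Since the number of edges of the backedge graph is a nonnegative integer that strictly decreases at each step, after finitely many swaps we reach an ordering satisfying both \labelcref{Condition:Paving2} and \labelcref{Condition:Paving1}, which is by definition a paving ordering, so $T$ is a paving tournament. The one subtlety worth flagging is that a single swap can create a \emph{new} violation of \labelcref{Condition:Paving1} at the adjacent positions $(i-1,i)$ or $(i+1,i+2)$, so this is not a one-pass procedure; this is precisely why I would rely on the strictly decreasing edge count rather than on local progress to guarantee termination. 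This is the only genuine obstacle, and it is resolved cleanly by the monovariant.
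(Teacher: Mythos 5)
Your proposal is correct and is essentially the paper's own argument: both rest on the observation that swapping a consecutive adjacent pair deletes exactly that one backedge while preserving \labelcref{Condition:Paving2}. The only difference is presentational --- the paper phrases it extremally (take the \labelcref{Condition:Paving2}-ordering with fewest backedges and derive a contradiction from one swap), while you phrase it iteratively with the edge count as a strictly decreasing monovariant; these are the same proof.
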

\begin{proof} 
    Let $n = \lvert V(T) \rvert$.
    Among all orderings of $T$ satisfying~\labelcref{Condition:Paving2}, let $\tau$ be an ordering such that the number of edges in $B_\tau(T)$ is minimized.
    We claim that $\tau$ satisfies~\labelcref{Condition:Paving1}, that is, $\tau$ is a paving ordering of~$T$.
    Suppose to the contrary that there is $i \in [n-1]$ such that $v_iv_{i+1} \in E(B_\tau(T))$.
    Let~$\tau'$ be an ordering of $T$ obtained from $\tau$ by swapping $v_i$ and $v_{i+1}$.
    Then $E(B_{\tau'}(T)) = E(B_\tau(T)) \setminus \{v_i v_{i+1}\}$ and $\tau'$ satisfies \labelcref{Condition:Paving2}, which is a contradiction since $B_{\tau'}(T)$ has fewer edges than $B_\tau(T)$.
    This proves~\Cref{Lemma:only-paving}.
\end{proof}

The following is the key lemma in this section, which states that if there are three consecutive vertices that form a cyclic triangle and they satisfy certain conditions, then we can simply reorder them to make all of them paved.

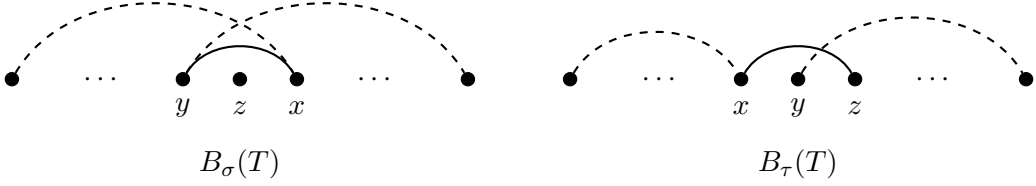
\begin{figure}[t]
    \begin{center}	
    \captionsetup{justification=centering}
        \begin{tikzpicture}[scale=0.75]
        \tikzset{vertex/.style = {shape=circle, fill=black, draw,minimum size=5pt, inner sep=0pt}}
        \tikzset{arc/.style = {->,thick, > = stealth}}
        \tikzset{edge/.style = {thick}}
                         
        \foreach \i in {2,...,4}{
              \node[vertex] (\i) at (\i-3, -1) {};
             }
        \node[vertex] (1) at (-4, -1) {};
        \node[vertex] (5) at (4, -1) {};
        \node at (-1, -1.5) {$y$};
        \node at (0, -1.5) {$z$};
        \node at (1, -1.5) {$x$};

        \node at (-2.4, -1) {$\cdots$};
        \node at (2.4, -1) {$\cdots$};
        \draw[dashed, thick, bend left=60] (1) to (4);
        \draw[edge, bend left=60] (2) to (4);
        \draw[dashed, thick, bend left=60] (2) to (5);

        \node at (0,-2.5) {$B_\sigma(T)$};

    \end{tikzpicture}\quad \quad \quad
    \begin{tikzpicture}[scale=0.75]
        \tikzset{vertex/.style = {shape=circle, fill=black, draw,minimum size=5pt, inner sep=0pt}}
        \tikzset{arc/.style = {->,thick, > = stealth}}
        \tikzset{edge/.style = {thick}}
                         
        \foreach \i in {2,...,4}{
              \node[vertex] (\i) at (\i-3, -1) {};
             }
        \node[vertex] (1) at (-4, -1) {};
        \node[vertex] (5) at (4, -1) {};

        \node at (-1, -1.5) {$x$};
        \node at (0, -1.5) {$y$};
        \node at (1, -1.5) {$z$};
        
        \node at (-2.4, -1) {$\cdots$};
        \node at (2.4, -1) {$\cdots$};
        \draw[dashed, thick, bend left=60] (1) to (2);
        \draw[edge, bend left=60] (2) to (4);
        \draw[dashed, thick, bend left=60] (3) to (5);

        \node at (0,-2.5) {$B_\tau(T)$};
    \end{tikzpicture}
    \caption{Example of reshuffling as in~\Cref{Lemma:Reshuffling}}
    \label{Figure:reshuffling}
    \end{center}
\end{figure} 

\begin{lemma}[Triangle Reshuffling Lemma] \label{Lemma:Reshuffling}
    Let $T$ be a $\Ptwo$-free tournament and let $\sigma$ be an ordering of $T$.
    Suppose that three distinct vertices $a, b, c$ in $T$ satisfy the following.
    \begin{itemize}
        \item The vertices $a, b, c$ induce a cyclic triangle in $T$;
        \item the vertices $a, b, c$ are consecutive in $\sigma$ appearing in that order;
        \item all vertices $a, b, c$ are paved in the ordered graph $B_\sigma(T) - ac$; and
        \item no two distinct vertices in $\{a, b, c\}$ have a common neighbour in $B_\sigma(T)$.
    \end{itemize} 
    Then either $T$ contains $T_5$ or $P_7^-$, or one can find an ordering $\sigma'$, which is obtained from $\sigma$ by reordering~$a, b, c$, so that $a, b, c$ are paved in $B_{\sigma'}(T)$.
\end{lemma}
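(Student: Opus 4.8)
The plan is to first pin down the local backedge structure. Since $a,b,c$ are consecutive in $\sigma$ (say at positions $i,i+1,i+2$), induce a cyclic triangle, and are paved in $B_\sigma(T)-ac$, the edge $ac$ must be the unique backedge among $\{a,b,c\}$; hence the triangle is oriented $a\to b\to c\to a$, and in $B_\sigma(T)-ac$ there is no edge inside $\{a,b,c\}$, so each of $a,b,c$ has at most one external left-neighbour and at most one external right-neighbour. I would record, for each $v\in\{a,b,c\}$, two bits $L(v),R(v)\in\{0,1\}$ indicating whether $v$ has an external left- (respectively right-) neighbour. Because every external neighbour sits either entirely before position $i$ or entirely after position $i+2$, these bits and the backedge-status of every external edge are invariant under any reordering of $a,b,c$ within their three positions; only the edges internal to the triangle change.

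Next I would enumerate the six permutations of $(a,b,c)$. Exactly three of them, namely $(a,b,c)$, $(b,c,a)$, and $(c,a,b)$, leave a single backedge inside the triangle (always joining the first and the third position), while the other three create two internal backedges and are useless for paving. In each good ordering the middle vertex retains only its external neighbours and is automatically paved, the first vertex gets the internal backedge as its unique right-neighbour, and the third vertex gets it as its unique left-neighbour. Thus all of $a,b,c$ become paved exactly when the first/third vertices carry no conflicting external neighbour: ordering $(a,b,c)$ works iff $R(a)=L(c)=0$; ordering $(b,c,a)$ works iff $R(b)=L(a)=0$; and ordering $(c,a,b)$ works iff $R(c)=L(b)=0$. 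If any of these holds I take the corresponding $\sigma'$ and finish.

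It remains to treat the \emph{bad case}, where all three fail, i.e.\ the three disjoint clauses $R(a)\vee L(c)$, $L(a)\vee R(b)$, $L(b)\vee R(c)$ all hold; here I claim $T$ contains $P_7^-$, which in particular gives the conclusion that $T$ contains $T_5$ or $P_7^-$. Using the cyclic symmetry $a\to b\to c\to a$ (which rotates the three clauses) together with the reverse-all-edges symmetry (under which $\Ptwo$, $T_5$, $P_7^-$ are invariant and which swaps $L$ with $R$), I would reduce to two representative patterns. In the \emph{pure} pattern $R(a)=R(b)=R(c)=1$, the unique external right-neighbours $p_a,p_b,p_c$ (distinct, by the no-common-neighbour hypothesis) satisfy $p_v\to v$ and $w\to p_v$ for the two other triangle vertices $w$; reading off the edges shows that $\{a,b,c\}$ and $\{p_a,p_b,p_c\}$ are the out-degree-$3$ and out-degree-$2$ classes, and applying $\Ptwo$-freeness with apex $a$, then $b$, then $c$ forces the triangle on $p_a,p_b,p_c$ to be cyclic; at that point the six vertices induce exactly $P_7^-$, with $(\{a,b,c\},\{p_a,p_b,p_c\})$ its degree partition. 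In a \emph{mixed} pattern, the six relevant vertices (the triangle plus the at most three external neighbours named by the satisfied literals) have all mutual edges determined except for a few edges among the external neighbours, and I would show that two suitable choices of apex each yield a copy of $\Ptwo$ whose avoidance conditions are mutually exclusive; hence $\Ptwo$ appears regardless of the undetermined orientations, contradicting $\Ptwo$-freeness. So mixed patterns never occur, the bad case is always pure, and it produces $P_7^-$.

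The main obstacle is exactly this bad-case analysis: one must organise the eight literal-patterns into orbits under the cyclic and reversal symmetries and, for each orbit, either realise $P_7^-$ explicitly or extract $\Ptwo$ from the forced edges. The delicate point is that the impossibility arguments must be uniform in the undetermined edges among the external neighbours — the mechanism is that two distinct choices of apex impose conflicting requirements (dominating versus being dominated across the same unknown edges), so no orientation can avoid a copy of $\Ptwo$ — and that in the pure case one correctly checks that the six-vertex tournament obtained is genuinely isomorphic to $P_7^-$ and not merely another tournament with the same score sequence $(2,2,2,3,3,3)$.
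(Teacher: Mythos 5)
Your proposal is correct, and its skeleton (characterize exactly which reorderings of $a,b,c$ produce a paving, then extract $P_7^-$ when none does) matches the paper's; but the two proofs genuinely part ways on how they treat a triangle vertex with external neighbours on \emph{both} sides. The paper's Claim~1 dispatches such a vertex at once by a five-vertex argument: if $x$ is its left-neighbour and $y$ its right-neighbour, then either $xy \in E(T)$ yields a copy of $\Delta(1,2,2)$ or $\{a,b,c,x,y\}$ induces $T_5$ -- so the $T_5$ disjunct of the conclusion absorbs this case, and all later analysis deals only with one-sided vertices. You instead claim that in the bad case a both-sided vertex is outright impossible, because the additional external neighbour supplied by the third clause forces $\Delta(1,2,2)$ under \emph{every} orientation of the undetermined edges. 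I checked this claim and it holds: for instance, in the pattern $R(a)=R(b)=L(b)=1$ with externals $z,y,x$, the three candidate copies $b\Rightarrow\{c,x\}\Rightarrow\{a,y\}\Rightarrow b$ (needs $x\to y$), $b\Rightarrow\{c,z\}\Rightarrow\{a,y\}\Rightarrow b$ (needs $z\to y$), and $a\Rightarrow\{b,y\}\Rightarrow\{x,z\}\Rightarrow a$ (needs $y\to x$ and $y\to z$) have mutually exclusive avoidance conditions, exactly the mechanism you describe; your rotation/reversal symmetry reduction is legitimate here because a left-external of $v$ is characterized rotation-invariantly (beaten by $v$, beats the other two), and the bad-case conclusions are subtournament statements, not positional ones. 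Your pure-pattern computation (three apex arguments forcing the cyclic triangle on $p_a,p_b,p_c$, then identification with $P_7^-$) is also right -- the resulting six-vertex tournament is the converse of the paper's configuration $(c',b',a',a,b,c)$, and $P_7^-$ is self-converse. The net effect is that you prove something slightly stronger than the paper: when no reordering exists, $P_7^-$ is always present, so the $T_5$ alternative is never actually needed. The trade-off is cost: the paper's Claim~1 uses five vertices and one unknown edge, while your mixed-case elimination needs six-vertex arguments over several unknown edges (though, up to your symmetries, a single representative suffices).

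Two justifications should be repaired, though neither is fatal. First, the forced orientation $a\to b\to c\to a$ does \emph{not} follow from consecutiveness plus pavedness in $B_\sigma(T)-ac$, as you assert: the reverse orientation, whose internal backedges are $ab$ and $bc$, is fully compatible with those hypotheses (it merely forces $b$ to have no external neighbours). What excludes it is the fourth hypothesis: under the reverse orientation, $b$ is a common neighbour of $a$ and $c$ in $B_\sigma(T)$. Second, the three non-cyclic permutations are not literally ``useless for paving'' -- ordering $(a,c,b)$, say, paves all three vertices whenever the middle vertex has no external neighbours at all -- but whenever a non-cyclic ordering works, one of your three cyclic ones works as well, so restricting attention to the cyclic rotations loses nothing.
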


\begin{proof}
    Let $B = B_\sigma(T)$ and let $X = \{a, b, c\}$.
    The following claim shows that we may assume each of~$a, b, c$ has neighbours not in $X$ in at most one direction. 
    \begin{customclaim}{1} \label{claim:nbrs-one-direction}
        If one of $a, b, c$ has both a left-neighbour and a right-neighbour not in $X$, then $T$ contains~$T_5$.
    \end{customclaim}
    
    \begin{subproof}[Proof of~\Cref{claim:nbrs-one-direction}.]
        Suppose not. 
        After potentially reordering $a, b, c$ and relabeling, we may assume that $b$ has both a left-neighbour $x$ and a right-neighbour $y$ outside $X$. 
        By assumption, neither~$a, b$ nor~$b, c$ have a common neighbour in $B$.
        Thus, we have $b \Rightarrow \{c, x\} \Rightarrow \{a, y\} \Rightarrow b$ if $xy \in E(T)$; otherwise,~$\{a, b, c, x, y\}$ induces $T_5$ in $T$.
    \end{subproof}

    \begin{customclaim}{2} \label{claim:nbrs-same-direction}
        Suppose there are two distinct vertices in $X$ such that one has no left-neighbour outside $X$ and the other has no right-neighbour outside $X$.
        Then there is an ordering $\tau$ of $T$, obtained from $\sigma$ by reordering $a, b, c$, so that all vertices in $X$ are paved in $B_\tau(T)$.
    \end{customclaim}
    
    \begin{subproof}[Proof of~\Cref{claim:nbrs-same-direction}.]
    Let $x \in X$ such that $x$ has no right-neighbour outside $X$, let $z \in X$ such that $x \neq z$ and $z$ has no left-neighbour outside $X$, and let $y \in X$ so that $X = \{x, y, z\}$.

    If $xy, yz, zx \in E(T)$, let $\tau$ be the order obtained from $\sigma$ by reordering the vertices in $X$ so that they appear in the order $x, y, z$.
    Then $y$ is not adjacent to both $x$ and $z$ in $B_\tau(T)$, so $y$ is paved in $\tau$.
    In particular, this implies that all vertices in $X$ are paved in $\tau$, since $x$ has at most one left-neighbour and~$z$ is its only right-neighbour, and $z$ has at most one right-neighbour and $x$ is its only left-neighbour. See~\Cref{Figure:reshuffling} for an illustration. 
    Thus, assume that $xz, yx, zy \in E(T)$.
    By~\Cref{claim:nbrs-one-direction},~$y$ has a neighbour outside $\{x, z\}$ in at most one direction. 
    If $y$ has no neighbour to the left outside~$\{x, z\}$, let $\tau$ be obtained from $\sigma$ by reordering the vertices in $X$ so that they appear in the order $x, z, y$. 
    Otherwise, let $\tau$ be obtained from $\sigma$ by reordering the vertices in $X$ so that they appear in the order $y, x, z$. 
    In either case, $\tau$ is an ordering where all vertices in $X$ are paved.
    \end{subproof}

    By~\Cref{claim:nbrs-same-direction}, we may assume that each of $a, b, c$ has a left-neighbour, say $a',b',c'$, respectively. 
    Now, the five vertices $a, a', b, c, c'$ induce $\Delta(1, 2, 2)$ with $c\Rightarrow \{a, c'\}\Rightarrow \{a', b\}\Rightarrow c$, unless $a'c' \in E(T)$. 
    By applying the same argument for any pair of two vertices in $\{a', b', c'\}$, we have $b'a', c'b' \in E(T)$.
    Therefore, the six vertices $a, a', b, b', c, c'$ induce a copy of $P_7^-$, where $(c',b',a',a,b,c)$ is a canonical ordering of the copy.
\end{proof}

As mentioned earlier, we use this lemma to prove some special cases of the main theorem.
First, we observe that basic tournaments are not paving tournaments.

\begin{proposition} \label{Proposition:C5-or-C6}
    Let $T$ be a tournament containing $T_5$ or $P_7^-$.
    Then for any ordering $\sigma$ of $T$, at least two distinct vertices are not paved in $B_\sigma(T)$.
\end{proposition}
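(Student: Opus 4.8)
The plan is to recast the definition of ``paved'' in terms of in- and out-neighbours, and then to locate two non-paved vertices inside any fixed copy of $T_5$ or $P_7^-$, regardless of the ordering $\sigma$.

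First I would record the dictionary between neighbours in $B_\sigma(T)$ and neighbours in $T$. Write $\sigma = (v_1, \ldots, v_n)$. For a vertex $v_i$, a right-neighbour $v_k$ (so $k > i$ and $v_iv_k \in E(B_\sigma(T))$) corresponds to the backedge $v_kv_i$, that is, to a later in-neighbour $v_k \in N^-(v_i)$; dually, a left-neighbour $v_j$ (so $j < i$) corresponds to the backedge $v_iv_j$, that is, to an earlier out-neighbour $v_j \in N^+(v_i)$. Hence $v_i$ is paved in $B_\sigma(T)$ if and only if it has at most one in-neighbour appearing after it and at most one out-neighbour appearing before it in $\sigma$.

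Second, I would note the relevant degree data: in $T_5$ every vertex has in-degree and out-degree exactly $2$, and in $P_7^-$ every vertex has in-degree and out-degree in $\{2, 3\}$ (the degree partition $(D_1, D_2)$ splits the six vertices into those with out-degree $3$, in-degree $2$ and those with out-degree $2$, in-degree $3$). In particular, both $T_5$ and $P_7^-$ have minimum in-degree and minimum out-degree at least $2$.

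Finally, fix a subset $Y \subseteq V(T)$ with $T[Y]$ isomorphic to $T_5$ or $P_7^-$, and let $y$ and $y'$ be the $\sigma$-first and $\sigma$-last vertices of $Y$. All in-neighbours of $y$ inside $Y$ appear after $y$ in $\sigma$ (since $y$ is first in $Y$, all of $Y \setminus \{y\}$ follows it), and there are at least two of them by the degree bound; by the dictionary above these are right-neighbours, so $y$ is not paved. Symmetrically, all out-neighbours of $y'$ inside $Y$ appear before $y'$, and there are at least two of them, so $y'$ has at least two left-neighbours and is not paved. Since $\lvert Y \rvert \geq 5$, we have $y \neq y'$, giving two distinct non-paved vertices. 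The only point needing care is the bookkeeping in the dictionary between backedge-graph neighbours and tournament neighbours together with the routine verification of the minimum degrees of $P_7^-$; I expect no substantive obstacle beyond this.
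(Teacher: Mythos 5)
Your proof is correct and follows essentially the same route as the paper: both arguments exploit that every vertex of $T_5$ and $P_7^-$ has in-degree and out-degree at least two, and then observe that the $\sigma$-first vertex of the copy has all its in-neighbours (within the copy) as right-neighbours in $B_\sigma(T)$, while the $\sigma$-last vertex has all its out-neighbours as left-neighbours, so neither is paved. The only cosmetic difference is that the paper first reduces to the case where $T$ itself is the basic tournament, whereas you argue directly inside the copy $Y$.
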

\begin{proof}
    It suffices to prove the statement when $T$ is isomorphic to $T_5$ or $P_7^-$.
    Let $n = \lvert V(T) \rvert$ and fix an ordering $\sigma = (v_1, \ldots, v_n)$ of $T$.
    If $v_1$ is paved in $B_\sigma(T)$, then $d^-(v_1) \leq 1$; similarly, if $v_n$ is paved in $B_\sigma(T)$, then $d^+(v_n) \leq 1$.
    However, every vertex in $T$ has at least two in-neighbours and two out-neighbours, so $v_1$ and $v_n$ are not paved in $B_\sigma(T)$.
\end{proof}

Let $T$ be a tournament and $\sigma$ be an ordering of $T$.
Recall that a vertex is paved in $B_\sigma(T)$ if it has at most one $\sigma$-left-neighbour and at most one $\sigma$-right-neighbour.
We say a vertex in $T$ is \emph{nearly paved in $B_\sigma(T)$} if either it has exactly two $\sigma$-left-neighbours and at most one $\sigma$-right-neighbour, or it has at most one $\sigma$-left-neighbour and exactly two $\sigma$-right-neighbours.

\begin{theorem} \label{Theorem:two-neighbours-same-direction}
	Let $T$ be a $\Delta(1, 2, 2)$-free tournament with $n$ vertices and let $\sigma = (v_1, \ldots, v_n)$ be an ordering of $T$.
    Suppose there is $i \in [n]$ such that all vertices in $T$ except $v_i$ are paved and $v_i$ is nearly paved in $B_\sigma(T)$.
    Then $T$ is a paving tournament.
\end{theorem}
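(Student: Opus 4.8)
The plan is to exhibit, after reshuffling a few consecutive vertices, an ordering of $T$ in which every vertex is paved, i.e.\ one satisfying~\labelcref{Condition:Paving2}; by~\Cref{Lemma:only-paving} this immediately yields that $T$ is a paving tournament. Since both the hypothesis and the conclusion are invariant under reversing all edges of $T$ together with the ordering $\sigma$, and since $\Ptwo$ is self-converse, I may assume that $v_i$ has exactly two $\sigma$-right-neighbours $v_j,v_k$ (with $i<j<k$) and at most one $\sigma$-left-neighbour. Because $\sigma$ is an ordering in which only $v_i$ fails to be paved, \Cref{Proposition:C5-or-C6} guarantees that $T$ contains neither $T_5$ nor $P_7^-$; I will use this to discharge the corresponding alternative in the Triangle Reshuffling Lemma.

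First I would record the rigid local structure forced by the hypothesis. Since $v_j,v_k$ are the only right-neighbours of $v_i$, the vertex $v_i$ is out-complete to every vertex to its right except $v_j$ and $v_k$, while $v_j\Rightarrow v_i$ and $v_k\Rightarrow v_i$. As $v_j$ and $v_k$ are paved and $v_i$ is a left-neighbour of each, $v_i$ is in fact the \emph{unique} left-neighbour of $v_j$ and of $v_k$; hence every vertex before $v_j$ (resp.\ before $v_k$) other than $v_i$ beats $v_j$ (resp.\ $v_k$), and in particular $v_j\Rightarrow v_k$. Next I would bound the gap before $v_j$: if there were two vertices $v_a,v_b$ with $i<a<b<j$, then $v_i\Rightarrow\{v_a,v_b\}\Rightarrow\{v_j,v_k\}\Rightarrow v_i$ would be a copy of $\Ptwo$, a contradiction. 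Thus $j\in\{i+1,i+2\}$.

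If $j=i+1$, then $v_iv_{i+1}$ is a backedge between consecutive vertices, and swapping $v_i$ and $v_{i+1}$ deletes exactly this edge from the backedge graph while leaving every other adjacency unchanged; the resulting ordering has $v_i$ paved (its only right-neighbour is now $v_k$) and every other vertex still paved, so~\labelcref{Condition:Paving2} holds. If $j=i+2$, then $v_i,v_{i+1},v_j$ are three consecutive vertices that, by the structure above, induce the cyclic triangle $v_i\Rightarrow v_{i+1}\Rightarrow v_j\Rightarrow v_i$, and I would apply the Triangle Reshuffling Lemma (\Cref{Lemma:Reshuffling}) with $(a,b,c)=(v_i,v_{i+1},v_j)$. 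Removing the long edge $v_iv_j$ makes all three paved, and the $T_5$/$P_7^-$ outcome of the lemma is excluded by the first paragraph, so the lemma returns a reordering of $\{v_i,v_{i+1},v_j\}$ in which these three are paved; by~\Cref{Proposition:reordering-interval} all remaining vertices stay paved, so again~\labelcref{Condition:Paving2} holds and~\Cref{Lemma:only-paving} finishes the proof.

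The main obstacle is verifying the last hypothesis of~\Cref{Lemma:Reshuffling}, namely that no two of $v_i,v_{i+1},v_j$ share a neighbour in $B_\sigma(T)$. The key point is that every vertex other than $v_i$ is paved, so a shared neighbour would overload one of these paved vertices: a common right-neighbour $w$ of two of $v_i,v_{i+1},v_j$ would have both of them as left-neighbours, and a common left-neighbour would have both of them as right-neighbours, each contradicting that $w\neq v_i$ is paved. The remaining potential coincidences are ruled out directly by the local structure (for instance, $v_k$ is not a right-neighbour of $v_{i+1}$ since $v_{i+1}\Rightarrow v_k$, and $v_j$ is not a neighbour of $v_{i+1}$ in $B_\sigma(T)$ because $v_{i+1}\Rightarrow v_j$ is a forward edge). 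Carrying out this short case check is the only subtle step; everything else is bookkeeping on the backedge graph.
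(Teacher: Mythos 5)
Your proof is correct and takes essentially the same route as the paper's: the same reduction (WLOG two right-neighbours), the same $\Delta(1,2,2)$ argument showing the nearer right-neighbour is at distance at most two, the same swap in the distance-one case, and the same application of \Cref{Lemma:Reshuffling} in the distance-two case, with common neighbours ruled out by the pavedness of all other vertices. The only (minor) difference is that you explicitly invoke \Cref{Proposition:C5-or-C6} to discharge the $T_5$/$P_7^-$ outcome of the reshuffling lemma, a step the paper leaves implicit.
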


\begin{proof}
Let $G = B_\sigma(T)$.
Without loss of generality, suppose that $v_i$ has two distinct right-neighbours in $G$.
Let $j_1, j_2 \in [n]$ be indices such that $i+1 \leq j_1 < j_2 \leq n$ and $v_i v_{j_1}, v_i v_{j_2} \in E(G)$.
When~$j_1 = i+1$, let $\sigma'$ be an ordering of $T$ obtained from $\sigma$ by swapping $v_i$ and $v_{i+1}$.
By~\Cref{Proposition:reordering-interval}, all vertices in~$V(T) \setminus \{v_i, v_{i+1}\}$ are paved in $\sigma'$.
Moreover, $v_i$ has at most one~$\sigma'$-left-neighbour and exactly one~$\sigma'$-right-neighbour $v_{j_2}$, and $v_{i+1}$ has no $\sigma'$-left-neighbour and at most one $\sigma'$-right-neighbour.
This shows that $v_i, v_{i+1}$ are also paved in $B_\sigma'(T)$, so $T$ is a paving tournament by~\Cref{Lemma:only-paving}.
Hence, we may assume that $j_1 \geq i+2$.

\begin{customclaim}{1} \label{Claim:applying-reshuffling1}
    There is exactly one vertex between $i$ and $j_1$, that is, $j_1 = i+2$.
\end{customclaim}
\begin{subproof}[Proof of~\Cref{Claim:applying-reshuffling1}]
    Suppose to the contrary that $j_1 \geq i+3$.
    Since both $v_{j_1}$ and $v_{j_2}$ are paved in~$G$, $v_{i}$ is the unique $\sigma$-left-neighbour of $v_{j_1}$ and $v_{j_2}$.    
    Thus, $v_{j_1}, v_{j_2}$ are not adjacent to $v_{i+1}, v_{i+2}$ in~$G$ and so $\{v_{i+1}, v_{i+2}\} \Rightarrow \{v_{j_1}, v_{j_2}\}$ in $T$.
    Since $\{v_{j_1}, v_{j_2}\} \Rightarrow v_i \Rightarrow \{v_{i+1}, v_{i+2}\}$, the five vertices~$v_i, v_{i+1}, v_{i+2}, v_{j_1}, v_{j_2}$ induce a copy of $\Delta(1, 2, 2)$ in $T$ and we reach a contradiction.
\end{subproof}

Consider the three vertices $v_i, v_{i+1}, v_{i+2}$.
Observe that $v_{i} v_{i+2} \in E(G)$ but $v_i v_{i+1}, v_{i+1} v_{i+2} \notin E(G)$ by~\Cref{Claim:applying-reshuffling1} and $v_i$ is the only vertex in $T$ which is not paved in $G$.
Furthermore, any two distinct vertices in $\{v_i, v_{i+1}, v_{i+2}\}$ do not have a common neighbour in $G \setminus \{v_i, v_{i+1}, v_{i+2}\}$, since otherwise, the common neighbour is not paved in $G$.
Therefore, by~\Cref{Lemma:Reshuffling}, we conclude that $T$ is a paving tournament.
\end{proof}

\begin{corollary} \label{Corollary:exactly-one-neighbour}
    Let $T$ be a $\Delta(1, 2, 2)$-free tournament.
    Suppose that there exists $x \in V(T)$ such that~$T \setminus x$ is a paving tournament and either $d^+(x) \leq 1$ or $d^-(x) \leq 1$ holds.
    Then $T$ is a paving tournament.
\end{corollary}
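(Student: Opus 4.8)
The plan is to take a paving ordering of $T \setminus x$, insert $x$ into it, and arrange the insertion so that the resulting ordering of $T$ falls under the hypotheses of either \Cref{Lemma:only-paving} or \Cref{Theorem:two-neighbours-same-direction}. First I would reduce to a single case: both being $\Ptwo$-free and being a paving tournament are preserved under reversing all edges (for a paving ordering $\sigma$, the reverse ordering is a paving ordering of the reverse tournament, since it merely swaps the roles of $\sigma$-left- and $\sigma$-right-neighbours and leaves \labelcref{Condition:Paving1} and \labelcref{Condition:Paving2} intact), and reversing edges turns $d^-(x) \le 1$ into $d^+(x) \le 1$. Hence I may assume $d^+(x) \le 1$. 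Write $n = \lvert V(T) \rvert$ and let $\sigma' = (v_1, \ldots, v_{n-1})$ be a paving ordering of $T \setminus x$.

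The key move is to insert $x$ at the very end, forming $\sigma = (v_1, \ldots, v_{n-1}, x)$, and then carefully track which backedges this creates. A vertex $u \neq x$ lies before $x$, so the edge between $u$ and $x$ is a backedge of $\sigma$ precisely when $xu \in E(T)$, that is, when $u$ is an out-neighbour of $x$; edges from in-neighbours of $x$ into $x$ point forward and therefore do not appear in $B_\sigma(T)$. Since $d^+(x) \le 1$, there is at most one such out-neighbour, say $w$ (and none if $d^+(x) = 0$). Consequently $x$ has at most one $\sigma$-left-neighbour (namely $w$, if it exists) and no $\sigma$-right-neighbour, so $x$ is paved. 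Moreover, restricted to $V(T \setminus x)$, the backedge graph $B_\sigma(T)$ agrees with $B_{\sigma'}(T \setminus x)$ except for the single new backedge $xw$, which makes $x$ a $\sigma$-right-neighbour of $w$. Thus every vertex other than $w$ keeps exactly its old neighbourhood and remains paved, while $w$ gains one extra right-neighbour and retains its left-neighbours.

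It then remains to split on the status of $w$. If $d^+(x) = 0$, or if $w$ had no $\sigma'$-right-neighbour, then $w$ still has at most one right-neighbour, so \emph{every} vertex of $T$ is paved in $B_\sigma(T)$; in particular \labelcref{Condition:Paving2} holds, and $T$ is a paving tournament by \Cref{Lemma:only-paving}. Otherwise $w$ had exactly one right-neighbour in $\sigma'$, so in $B_\sigma(T)$ it has exactly two right-neighbours and at most one left-neighbour. That is, $w$ is nearly paved while every other vertex, including $x$, is paved. This is exactly the hypothesis of \Cref{Theorem:two-neighbours-same-direction} with $v_i = w$, and that theorem yields that $T$ is a paving tournament.

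I do not expect a genuine obstacle: the very large in-degree of $x$ (all but at most one vertex) is what forces the endpoint to be essentially the only workable insertion point, and the sole way the insertion can violate \labelcref{Condition:Paving2} is by giving $w$ a second right-neighbour, which is precisely the near-pavedness that \Cref{Theorem:two-neighbours-same-direction} is built to absorb. The only points that require care are the bookkeeping of which edges become backedges after the insertion (in-edges stay forward; the lone out-edge to $w$ becomes a backedge) and the verification that reversing all edges preserves both of the properties in question.
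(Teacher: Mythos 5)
Your proof is correct and is essentially the paper's own argument in mirror image: the paper reduces (by reversing all edges) to $d^-(x)\leq 1$ and inserts $x$ at the front of the paving ordering of $T\setminus x$, whereas you reduce to $d^+(x)\leq 1$ and insert $x$ at the end, but in both cases the single possible new backedge leaves every vertex paved except possibly one nearly paved vertex, and the conclusion follows from \Cref{Lemma:only-paving} or \Cref{Theorem:two-neighbours-same-direction} exactly as in the paper.
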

\begin{proof}
    Let $n = \lvert V(T) \rvert$.
    It is straightforward that $T$ is a paving tournament if either $n \leq 1$ or $\min\{d^+(x), d^-(x)\} = 0$, so it suffices to prove the case when $n \geq 2$ and $\min\{d^+(x), d^-(x) \} = 1$.
    By possibly reversing all edges, we may assume that $d^-(x) = 1$. 
	Let $\sigma = (v_1, \ldots, v_{n-1})$ be a paving ordering of $T \setminus x$.
    Define an ordering $\sigma' = (x, v_1, \ldots, v_{n-1})$ of $T$ and let $G = B_{\sigma'}(T)$.
    If we let $v_i$ be the in-neighbour of $x$ in $T$ for $i \in [n-1]$, then $v_i$ is the unique neighbour of $x$ in $G$ and so $x$ is paved in $\sigma'$.

    If $v_i$ has no $\sigma'$-left-neighbour other than $x$, then $v_i$ is also paved in $G$.
    Thus, $T$ is a paving tournament by~\Cref{Lemma:only-paving}.
    Otherwise, since $\sigma$ is a paving ordering of $T \setminus x$, there exists a unique index $j \in [i-1]$ such that $v_j$ is a $\sigma'$-left-neighbour of $v_i$.
    This shows that $v_i$ has two $\sigma'$-left-neighbours and at most one $\sigma'$-right-neighbour, and every vertex in $G$ except $v_i$ is paved in $G$.
    Therefore, we conclude that $T$ is a paving tournament by~\Cref{Theorem:two-neighbours-same-direction}.
\end{proof}

\section{Completing the proof of~\Cref{Thm:main}} \label{Section:main-theorem}

In this section, we complete the proof of~\Cref{Thm:main}.
Due to~\Cref{Thm:homogeneous-main-theorem} and~\Cref{Corollary:exactly-one-neighbour}, it suffices to prove the following.

\begin{theorem} \label{Theorem:two-nbrs-in-both-direction}
    Let $T$ be a $\Delta(1, 2, 2)$-free, $T_5$-free, and $P_7^-$-free tournament with at least five vertices.
    Suppose there is a vertex $x \in V(T)$ such that $T \setminus x$ is a paving tournament and $x$ has at least two in-neighbours and at least two out-neighbours.
    Then $T$ is a paving tournament.
\end{theorem}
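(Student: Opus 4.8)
The plan is to take a paving ordering $\sigma = (v_1, \ldots, v_{n-1})$ of the paving tournament $T \setminus x$ and insert $x$ into it at a carefully chosen slot to obtain an ordering of $T$ that is (almost) paving. Write $I = N^-(x)$ and $O = N^+(x)$, so that $\lvert I \rvert, \lvert O \rvert \geq 2$ and $(I, O)$ partitions $V(T \setminus x)$. If we insert $x$ immediately before position $p+1$, then in the resulting backedge graph the left-neighbours of $x$ are exactly the vertices of $O$ in positions $\leq p$, and the right-neighbours of $x$ are exactly the vertices of $I$ in positions $> p$; moreover, a vertex of $O$ before $x$ acquires $x$ as a new right-neighbour and a vertex of $I$ after $x$ acquires $x$ as a new left-neighbour, while all backedges among the $v_i$ are unchanged. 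Thus making $x$ (nearly) paved amounts to controlling how many out-neighbours precede $x$ and how many in-neighbours follow it, and preserving~\labelcref{Condition:Paving2} for the remaining vertices amounts to ensuring that each out-neighbour placed before $x$ has no right-neighbour and each in-neighbour placed after $x$ has no left-neighbour in $B_\sigma(T \setminus x)$.

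The first step is to locate a good cut. Inserting $x$ immediately before the first out-neighbour $v_r$ leaves no out-neighbour to the left of $x$, so $x$ has no left-neighbour, and its right-neighbours are precisely the in-neighbours in positions $> r$; symmetrically one may insert just after the last in-neighbour. I claim that $\Delta(1,2,2)$-freeness forces the relevant set to be small. Indeed, if $b_1, b_2 \in O$ and $a_1, a_2 \in I$ are four distinct vertices with $b_i \to a_j$ for all $i, j$, then $x \Rightarrow \{b_1, b_2\} \Rightarrow \{a_1, a_2\} \Rightarrow x$ is a copy of $\Delta(1, 2, 2)$; hence the bipartite ``domination'' of $I$ by $O$ contains no $K_{2,2}$. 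Combining this with the monotone-path structure of $B_\sigma(T \setminus x)$ (each vertex has at most one left- and one right-neighbour, and pairs that are non-adjacent in $B_\sigma(T \setminus x)$ point forward) limits how many out-neighbours can sit to the left of two in-neighbours, and hence confines the interleaving of $I$ and $O$ near the cut to a short consecutive window.

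Once the interleaving is confined to such a window, I would finish as follows. If $x$ turns out to be paved and~\labelcref{Condition:Paving2} holds, we are done directly, or via~\Cref{Lemma:only-paving} after a consecutive adjacency is removed; if $x$ is nearly paved and every other vertex is paved, we apply~\Cref{Theorem:two-neighbours-same-direction}. In the remaining cases the offending vertices are $x$ together with two of its neighbours forming a cyclic triangle of consecutive vertices, and here I would invoke the Triangle Reshuffling Lemma~(\Cref{Lemma:Reshuffling}): since $T$ is both $T_5$-free and $P_7^-$-free, its alternative conclusion is impossible, so the reshuffle always succeeds and paves the three vertices, again reducing to~\Cref{Lemma:only-paving} or~\Cref{Theorem:two-neighbours-same-direction}.

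The main obstacle is the bookkeeping in the second step: turning the $K_{2,2}$-freeness of the $O$-to-$I$ domination, together with the at-most-one-left/right-neighbour property, into a clean bound on how the two vertex classes interleave near the insertion point, and checking that inserting $x$ never creates a second left- or right-neighbour for a vertex of $T \setminus x$. I expect this to require a case analysis on the positions of the first out-neighbour and the last in-neighbour and on the backedges incident to them, and it is precisely here that all three exclusions ($\Delta(1, 2, 2)$, $T_5$, and $P_7^-$) are used in concert, via the Triangle Reshuffling Lemma.
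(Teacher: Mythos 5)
Your high-level strategy coincides with the paper's: insert $x$ into a paving ordering of $T \setminus x$, use $\Delta(1,2,2)$-freeness (your $K_{2,2}$-free observation about the $O$-to-$I$ domination is exactly the mechanism behind the paper's Fuzzy Lemma, \Cref{Lemma:Fuzzy-lemma}) to confine the uncertain region to a short window, and then repair the ordering locally via \Cref{Lemma:only-paving}, \Cref{Theorem:two-neighbours-same-direction}, and \Cref{Lemma:Reshuffling}. But the proposal stops exactly where the real proof begins. What you call ``the bookkeeping in the second step'' is not bookkeeping: it is the entire content of \Cref{Lemma:no-fuzzy-part} and of the paper's proof of this theorem --- roughly ten cases and subcases, each requiring either a bespoke reordering of up to nine vertices together with a verification that every potential violation of~\labelcref{Condition:Paving2} forces a copy of $\Delta(1,2,2)$, $T_5$, or $P_7^-$, or the outright discovery of such a copy. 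You explicitly acknowledge that this analysis remains to be done, so what you have is a plan, not a proof.

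Moreover, your proposed finishing move is too weak to close that gap. You claim that in all remaining cases ``the offending vertices are $x$ together with two of its neighbours forming a cyclic triangle of consecutive vertices,'' so that \Cref{Lemma:Reshuffling} applies (its alternative conclusion being excluded by $T_5$- and $P_7^-$-freeness). That is false in general: \Cref{Lemma:Reshuffling} requires three \emph{consecutive} vertices, paved after deleting a single edge, with no common neighbours, and most of the hard cases do not fit this template. For instance, when the backedges between $\{v_h, v_i\}$ and $\{v_j, v_k\}$ (with $h = \textsf{out}^1(x,\sigma)$, $i = \textsf{out}^2(x,\sigma)$, $j = \textsf{in}^2(x,\sigma)$, $k = \textsf{in}^1(x,\sigma)$) form the crossing matching $\{v_h v_j, v_i v_k\}$, or the single edge $\{v_i v_j\}$ with $j = i+3$, the obstruction spans five to nine vertices whose relative order must be rearranged wholesale --- the paper uses orderings such as $\tau_{2,3}$ and $\tau_5^{2,2} = (v_1, \ldots, v_{h-1}, v_{h+1}, v_{i+1}, v_h, v_j, x, v_i, v_k, v_{i+2}, v_{j+1}, v_{k+1}, \ldots, v_{n-1})$ --- and the correctness of each such reordering is checked by exhibiting a forbidden subtournament for every possible extra backedge. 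None of this follows from the Triangle Reshuffling Lemma, and nothing in your proposal produces these reorderings or shows that iterated triangle reshuffles suffice. So there is a genuine gap: the case analysis at the heart of the theorem is missing, and the one tool you propose for it cannot carry that load.
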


The proof of~\Cref{Theorem:two-nbrs-in-both-direction} is by induction on the number of vertices in $T$.
More precisely, we guess the correct position of $x$ according to its in-neighbourhood and out-neighbourhood and locally reorder the vertices to obtain a paving ordering of $T$.
This method seems to have too many cases to consider at first glance, but as shown in the following lemma, our assumptions on $T$ and $x$ give a fairly clear idea of where $x$ should go in the paving ordering.

Let $T$ be a tournament with at least five vertices, let $x$ be a vertex of $T$ such that~$d^+(x) \geq 2$ and~$d^-(x) \geq 2$, and let $\sigma = (v_1, \ldots, v_{n-1})$ be an ordering of $T \setminus x$.
For each $i \in \{1, 2\}$, let $\textsf{in}^i(x, \sigma)$ be the $i$th-largest index $j$ such that $v_j$ is an in-neighbour of $x$, and let $\textsf{out}^i(x, \sigma)$ be the $i$th-smallest index $k$ such that $v_k$ is an out-neighbour of $x$.

\begin{lemma}[Fuzzy Lemma] \label{Lemma:Fuzzy-lemma}
    Let $n \geq 5$ be an integer and let $T$ be a $\Delta(1, 2, 2)$-free tournament with~$n$ vertices.
    Suppose there is a vertex $x \in V(T)$ such that $T \setminus x$ is a paving tournament with a paving ordering $\sigma = (v_1, \ldots, v_{n-1})$.
    Then there exists $i \in [n-1]$ such that $x$ has at most one out-neighbour in $\{v_1, \ldots, v_{i-1}\}$ and has at most one in-neighbour in $\{v_{i+4}, \ldots, v_{n-1}\}$.
\end{lemma}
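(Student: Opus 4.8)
The plan is to reduce the statement to a single inequality between two of the indices $\textsf{in}^2,\textsf{out}^2$, and then to prove that inequality by exhibiting a copy of $\Ptwo$ whenever it fails. First I dispose of the degenerate cases: if $d^+(x)\le 1$ then the first condition holds for every $i$, so $i=n-1$ works (the set $\{v_{i+4},\dots,v_{n-1}\}$ is then empty); symmetrically, if $d^-(x)\le 1$ then $i=1$ works. So I may assume $d^+(x),d^-(x)\ge 2$, which is exactly what is needed for the indices to be defined. Write $b=\textsf{out}^1(x,\sigma)$, $p=\textsf{out}^2(x,\sigma)$, $q=\textsf{in}^2(x,\sigma)$, and $a=\textsf{in}^1(x,\sigma)$, so that $b<p$ and $q<a$. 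The condition ``$x$ has at most one out-neighbour in $\{v_1,\dots,v_{i-1}\}$'' is equivalent to $i\le p$, and ``$x$ has at most one in-neighbour in $\{v_{i+4},\dots,v_{n-1}\}$'' is equivalent to $i\ge q-3$. An index $i\in[n-1]$ with $q-3\le i\le p$ exists precisely when $q-3\le p$ (take $i=\max(1,q-3)$, using $1\le p\le n-1$), so the whole lemma reduces to proving
\[
\textsf{in}^2(x,\sigma)\le \textsf{out}^2(x,\sigma)+3 .
\]

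Suppose for contradiction that $q\ge p+4$. The gap $\{v_{p+1},\dots,v_{q-1}\}$ then has at least three vertices. Since $p$ is the second-smallest out-neighbour index and $q$ the second-largest in-neighbour index, the out-neighbours of $x$ at positions $\le p$ are exactly $v_b,v_p$, and the in-neighbours of $x$ at positions $\ge q$ are exactly $v_q,v_a$; in particular there are exactly two of each. Partition the gap into the in-neighbours $G_{\mathrm{in}}$ and out-neighbours $G_{\mathrm{out}}$ of $x$. As $|G_{\mathrm{in}}|+|G_{\mathrm{out}}|\ge 3$, one of these sets has size at least $2$.

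The crux is that a forward edge needed for $\Ptwo$ can be destroyed by a backedge of $T\setminus x$, but property~\labelcref{Condition:Paving2} bounds how many. Recall that for $s<t$ the edge $v_sv_t$ of $T$ satisfies $v_s\Rightarrow v_t$ if and only if $\{v_s,v_t\}\notin E(B_\sigma(T\setminus x))$, and that in a paving ordering each vertex has at most one left-neighbour and at most one right-neighbour in $B_\sigma(T\setminus x)$. Consider the case $|G_{\mathrm{in}}|\ge 2$. Every in-neighbour of $x$ at a position $>p$ lies in $G_{\mathrm{in}}\cup\{v_q,v_a\}$, so there are exactly $|G_{\mathrm{in}}|+2$ of them, all to the right of both $v_b$ and $v_p$. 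Such an in-neighbour fails to be dominated by $v_b$ only if it is the unique right-neighbour of $v_b$, and fails to be dominated by $v_p$ only if it is the unique right-neighbour of $v_p$; hence at most two fail to be dominated by both. This leaves at least $|G_{\mathrm{in}}|\ge 2$ in-neighbours $m_1,m_2$ dominated by both, and then $x\Rightarrow\{v_b,v_p\}\Rightarrow\{m_1,m_2\}\Rightarrow x$ is a copy of $\Ptwo$, a contradiction. The case $|G_{\mathrm{out}}|\ge 2$ is symmetric: among the $|G_{\mathrm{out}}|+2$ out-neighbours at positions $<q$, at most two fail to dominate both $v_q$ and $v_a$ (namely the left-neighbours of $v_q$ and of $v_a$), so two out-neighbours $o_1,o_2$ remain with $x\Rightarrow\{o_1,o_2\}\Rightarrow\{v_q,v_a\}\Rightarrow x$.

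I expect the main obstacle to be precisely this interplay between backedges and the forward edges required by $\Ptwo$: a naive choice of two out-neighbours and two in-neighbours can be blocked by as many as two backedges, so the argument must not fix all four vertices in advance. The device that resolves it is to keep one extreme pair fixed and to \emph{choose} the other pair from a pool of size $|G_{\mathrm{in}}|+2$ (or $|G_{\mathrm{out}}|+2$), where the hypothesis $q\ge p+4$ makes the gap large enough that one side's pool can absorb the at-most-two blocked candidates. The remaining work is routine bookkeeping that I would verify carefully: that $v_q,v_a$ are indeed the only in-neighbours at positions $\ge q$ (and dually for the out-neighbours), that the chosen $m_1,m_2$ are distinct from $v_b,v_p$ and from $x$, and that the final $i$ lies in $[n-1]$.
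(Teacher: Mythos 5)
Your proof is correct and follows essentially the same route as the paper's: both hinge on bounding the gap between $\textsf{out}^2(x,\sigma)$ and $\textsf{in}^2(x,\sigma)$ by exhibiting a pool of at least four out-neighbours (or in-neighbours) of $x$, using the paving property to block at most two of them, and assembling the surviving pair with the extreme in-neighbours (or out-neighbours) into a copy of $\Ptwo$. The only cosmetic differences are that the paper fixes the index $i=\textsf{out}^2(x,\sigma)$ outright and phrases the gap bound as ``at most one in-neighbour and at most one out-neighbour in the gap,'' whereas you reduce to the inequality $\textsf{in}^2\le\textsf{out}^2+3$ and split on which type of neighbour occurs twice.
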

\begin{proof}
	The conclusion immediately follows when $d^+(x) \leq 1$ or $d^-(x) \leq 1$, so we may assume that~$d^+(x) \geq 2$ and~$d^-(x) \geq 2$.
    Let $i = \textsf{out}^2(x, \sigma)$ and $j = \textsf{in}^2(x, \sigma)$.
    We show that $i$ satisfies the desired property.
    Clearly, $x$ has at most one out-neighbour in $\{v_1, \ldots, v_{i-1}\}$.
    In addition, the lemma holds when $i>j$ or $i \geq n-4$, so assume $i<j$ and $i \leq n-5$.
    Let $X = \{v_t : i < t < j\}$.

    \begin{customclaim}{1} \label{Claim:at-most-one-neighbour}
        $\lvert N^+(x) \cap X \rvert \leq 1$ and $\lvert N^-(x) \cap X \rvert \leq 1$.
    \end{customclaim}
    
    \begin{subproof}[Proof of~\Cref{Claim:at-most-one-neighbour}]
        We only show that $\lvert N^+(x) \cap X \rvert \leq 1$ since the proof of the other inequality is similar.
        If $\lvert X \rvert \leq 1$, then the claim trivially holds.
        Assuming $\lvert X \rvert \geq 2$, suppose to the contrary that~$x$ has two distinct out-neighbours $u, u'$ in $X$ (see~\Cref{Figure:at-most-one-neighbour} for an illustration).
        Let $h = \textsf{out}^1(x, \sigma)$ and $k = \textsf{in}^1(x, \sigma)$.
        Since $v_j$ and $v_k$ are paved in $T \setminus x$, the set $L$ of $\sigma$-left-neighbours of $v_j, v_k$ in~$\{u, u' v_h, v_i\}$ consists of at most two vertices.
        However, if we take any two distinct vertices $w, w' \in \{u, u', v_h, v_i\} \setminus L$, then $x \Rightarrow \{w, w'\} \Rightarrow \{v_j, v_k\} \Rightarrow x$.
        This shows that $T$ contains $\Delta(1, 2, 2)$ and we reach a contradiction.
    \end{subproof}

    By~\Cref{Claim:at-most-one-neighbour}, $\lvert X \rvert \leq 2$ and so $j \leq i+3$.
    Therefore, $x$ has at most one in-neighbour in $\{v_{i+4}, \ldots, v_{n-1}\}$, as desired.
\end{proof}

\begin{figure}[t]
    \begin{center}	
        \begin{tikzpicture}[scale=0.75]
            \tikzset{vertex/.style = {shape=circle, fill=black, draw,minimum size=5pt, inner sep=0pt}}
            \tikzset{arc/.style = {->,thick, > = stealth}}
            \tikzset{edge/.style = {thick}}
                             
            \node[vertex, fill=white] (1) at (-5, 0) {};
            \node[vertex] (2) at (-3, 0) {};
            \node[vertex, fill=white] (3) at (-1, 0) {};
            \node[vertex] (4) at (1, 0) {};
            \node[vertex, fill=white] (5) at (3, 0) {};
            \node[vertex, fill=white] (6) at (5, 0) {};
            \node[vertex, fill=white] (v) at (0, -2) {};
            
            \node at (-7, 1) {$B_\sigma(T \setminus x)$};

            \node at (-6, 0) {$\cdots$};
            \node at (-4, 0) {$\cdots$};
            \node at (-2, 0) {$\cdots$};
            \node at (0, 0) {$\cdots$};
            \node at (2, 0) {$\cdots$};
            \node at (4, 0) {$\cdots$};
            \node at (6, 0) {$\cdots$};
            \node at (7, 0) {};

            \node at (-5, 0.5) {$v_h$};
            \node at (-3, 0.5) {$v_i$};
            \node at (-1, 0.5) {$u$};
            \node at (1, 0.5) {$u'$};
            \node at (3, 0.5) {$v_j$};
            \node at (5, 0.5) {$v_k$};
            \node at (0.5, -2.25) {$x$};

            \draw[dotted, thick] (-8, -0.5) -- (8, -0.5);
            
            \draw[arc] (v) to (1);
            \draw[arc] (v) to (2);
            \draw[arc] (v) to (3);
            \draw[arc] (v) to (4);
            \draw[arc] (5) to (v);
            \draw[arc] (6) to (v);

            \draw[edge, bend right=30] (5) to (2);
            \draw[edge, bend right=45] (6) to (4);
        \end{tikzpicture}
    \caption{Proof of~\Cref{Claim:at-most-one-neighbour} when $L = \{u', v_i\}$. The five white vertices induce a copy of $\Delta(1, 2, 2)$ in $T$.}
    \label{Figure:at-most-one-neighbour}
    \end{center}
\end{figure}
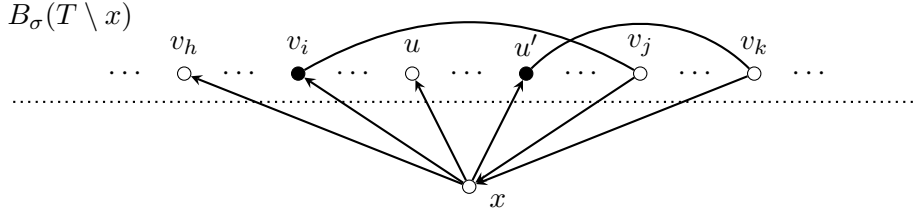 

\Cref{Lemma:Fuzzy-lemma} shows that $x$ ``fits into" the paving ordering of $T \setminus x$ except for at most four vertices whose adjacency to $x$ is unclear (which we call the ``fuzzy part").
Henceforth, we first prove~\Cref{Theorem:two-nbrs-in-both-direction} under the assumption that the fuzzy part does not exist in the following lemma and deal with the remaining case at the end of the section.
Let $T$ be a tournament with $n$ vertices and $\sigma = (v_1, \ldots, v_n)$ be an ordering of $T$.
For two distinct vertices $v_i, v_j$ of $T$, let $\max_{\sigma}\{v_i, v_j\} = v_{\max\{i, j\}}$ and $\min_\sigma \{v_i, v_j\} = v_{\min\{i, j\}}$.

\begin{lemma} \label{Lemma:no-fuzzy-part}
	Let $n \geq 5$ be an integer, let $T$ be a $\{\Delta(1, 2, 2), T_5, P_7^-\}$-free tournament	with $n$ vertices, and let $x$ be a vertex of $T$ such that $d^+(x) \geq 2$ and $d^-(x) \geq 2$.
	Suppose that $T \setminus x$ is a paving tournament with a paving ordering $\sigma = (v_1, \ldots, v_{n-1})$.
	If $\textsf{in}^2(x, \sigma) < \textsf{out}^2(x, \sigma)$, then $T$ is a paving tournament.
\end{lemma}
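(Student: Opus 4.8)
The plan is to insert $x$ into the paving ordering $\sigma$ of $T\setminus x$ at a carefully chosen position and then invoke \Cref{Lemma:only-paving}: it suffices to produce an ordering of $T$ in which \emph{every} vertex is paved, since such an ordering satisfies~\labelcref{Condition:Paving2}, and then $T$ is paving whether or not~\labelcref{Condition:Paving1} also holds. Write $p=\textsf{out}^1(x,\sigma)$, $q=\textsf{in}^1(x,\sigma)$, $i=\textsf{out}^2(x,\sigma)$, and $j=\textsf{in}^2(x,\sigma)$, so that $j<i$ by hypothesis, $p<i$, and $q>j$. For an index $m$ let $\sigma_m$ be the ordering obtained by placing $x$ between $v_{m-1}$ and $v_m$. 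The first step is to record the effect of this insertion: since the relative order of $V(T)\setminus\{x\}$ is unchanged, the restriction of $B_{\sigma_m}(T)$ to $V(T)\setminus\{x\}$ equals $B_\sigma(T\setminus x)$, and the only new edges are $xv_s$ for out-neighbours $v_s$ of $x$ with $s<m$ (each gaining $x$ as a right-neighbour) and $xv_t$ for in-neighbours $v_t$ of $x$ with $t\ge m$ (each gaining $x$ as a left-neighbour).

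For any $m$ with $j+1\le m\le i$, the vertex $x$ is paved in $\sigma_m$: its left-neighbours are out-neighbours of index $<m\le i$, of which only $v_p$ qualifies, and its right-neighbours are in-neighbours of index $\ge m>j$, of which only $v_q$ qualifies. Moreover, among all vertices other than $x$, only $v_p$ (and only if $p<m$) and $v_q$ (and only if $q\ge m$) can gain an $x$-neighbour, and each gains at most one; hence each of them is either paved or \emph{nearly paved} in $\sigma_m$. Next I would observe directly from the definitions of $\textsf{out}^2$ and $\textsf{in}^2$ that $\{v_t:j<t<i\}\subseteq\{v_p,v_q\}$ (an index strictly between $j$ and $i$ belongs either to an out-neighbour of index $<i$, forcing $v_p$, or to an in-neighbour of index $>j$, forcing $v_q$), so $i\le j+3$.

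The bulk of the cases is then routine. If $p\in\{j+1,\dots,i-1\}$, then choosing $m\le p$ puts $v_p$ to the right of $x$, so $v_p$ gains no $x$-neighbour; symmetrically, if $q\in\{j+1,\dots,i-1\}$, choosing $m>q$ neutralizes $v_q$. A short inspection of the positions of $v_p,v_q$ relative to the interval $(j,i)$ shows that unless $p\le j$ and $q\ge i$ hold simultaneously, one can select a single $m\in\{j+1,\dots,i\}$ that neutralizes at least one of $v_p,v_q$; the remaining vertex is then the unique non-paved vertex and is nearly paved, so \Cref{Theorem:two-neighbours-same-direction} finishes the proof (and if neither remains problematic, $\sigma_m$ already witnesses that $T$ is paving through \Cref{Lemma:only-paving}).

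This leaves the single hard case $p\le j$ and $q\ge i$, which forces $(j,i)=\emptyset$, i.e.\ $i=j+1$, so $x$ must be inserted between $v_j$ and $v_{j+1}$ with no positional freedom. If at most one of $v_p,v_q$ has a pre-existing neighbour on the relevant side, we again reduce to \Cref{Theorem:two-neighbours-same-direction}; the genuine obstacle is when $v_p$ has a right-neighbour $r$ and $v_q$ has a left-neighbour $w$ in $T\setminus x$. Here I expect to use $T_5$- and $P_7^-$-freeness for the first time: every vertex of $\{v_1,\dots,v_j\}$ other than $v_p$ is an in-neighbour of $x$ and every vertex of $\{v_{j+1},\dots,v_{n-1}\}$ other than $v_q$ is an out-neighbour of $x$, which together with $x\to v_p$ and $v_q\to x$ rigidly constrains the local picture. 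Using \Cref{Proposition:paving} to locate $r$ and $w$, I would either exhibit a cyclic triangle on three consecutive vertices meeting the hypotheses of the Triangle Reshuffling Lemma (\Cref{Lemma:Reshuffling}) and reorder to pave all three, or extract a copy of $T_5$ or $P_7^-$ from $\{v_p,v_j,x,v_{j+1},v_q\}$ together with $r,w$, contradicting the hypothesis. Indeed, the smallest instance ($n=5$, with $r=v_{j+1}$ and $w=v_j$) already induces exactly $T_5$, signalling that $T_5$-freeness is precisely what excludes this configuration. Verifying the reshuffling hypotheses (pavedness after deleting the long edge, and the no-common-neighbour condition) and controlling the positions of $r$ and $w$ in the general case is the main obstacle.
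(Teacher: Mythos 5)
Your framework is correct and is essentially the paper's own strategy: insert $x$ at a position $m \in \{j+1, \ldots, i\}$, observe that only $v_p$ and $v_q$ can cease to be paved and that each gains at most one backedge, derive $i \le j+3$, and dispatch every case in which at least one of $v_p, v_q$ can be neutralized, via \Cref{Lemma:only-paving} or \Cref{Theorem:two-neighbours-same-direction}. All of that is sound, and your identification of the irreducible case ($p \le j-1$ and $q \ge i+1$, hence $i = j+1$, with $v_p$ having a right-neighbour $r$ and $v_q$ a left-neighbour $w$ already in $B_\sigma(T \setminus x)$) matches the paper's Case 2 exactly.

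However, that case is where the proof actually lives, and you leave it unproved --- you say yourself that verifying the reshuffling hypotheses and controlling the positions of $r$ and $w$ is ``the main obstacle.'' Concretely, two things are missing. First, a positional claim (the paper's Claim 1 inside its Case 2): via a $\Delta(1,2,2)$ extraction one shows there is exactly one vertex between $v_p$ and the first of $x, r$ in the ordering, and exactly one between $v_q$ and the last of $x, w$; without this one cannot even enumerate the finitely many relative positions of $r$ and $w$. Second, and more seriously, your dichotomy ``either reshuffle a cyclic triangle or extract a forbidden subtournament'' does not cover all configurations. The paper needs five subcases: two end in contradictions (one via $\Delta(1,2,2)$, one via $T_5$ --- the latter being precisely the configuration $r = v_{j+1}$, $w = v_j$ you spotted), two are handled by \Cref{Lemma:Reshuffling}, but the subcase $r = v_q$ and $w = v_p$ (the two extra backedges point at each other) fits neither branch: there $T$ \emph{is} paving, so no forbidden subtournament exists, and for every candidate triple of consecutive vertices the no-common-neighbour hypothesis of \Cref{Lemma:Reshuffling} fails (for instance $v_p$ and $x$ have the common neighbour $v_q$ in the backedge graph). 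The paper resolves this with a bespoke reordering of the five consecutive vertices $v_p, v_j, x, v_{j+1}, v_q$ into $v_j, v_q, x, v_p, v_{j+1}$, using $T_5$-freeness to pin down the edge between $v_j$ and $v_{j+1}$ and four separate $\Delta(1,2,2)$ checks to certify that the new ordering is paving. This subcase, together with the positional claim needed to set up the case split, constitutes a genuine gap in your proposal.
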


\begin{proof}
Let $G = B_\sigma(T \setminus x)$, $i = \textsf{out}^2(x, \sigma)$, and $j = \textsf{in}^2(x, \sigma)$.
Suppose that $i>j$.
We consider the following two cases according to the existence of a vertex between $v_i$ and $v_j$ in $\sigma$.

\begin{customcase}{1}
	There is at least one vertex between $v_i$ and $v_j$ in $\sigma$, that is, $i \geq j+2$.
\end{customcase}

Consider the vertex $v_{j+1}$.
If $v_{j+1}$ is an in-neighbour of $x$, then $\textsf{in}^1(x, \sigma) = j+1$.
Thus, $x \Rightarrow \{v_k : k \geq j+2\}$ and $x$ has at most one out-neighbour in $\{v_\ell : \ell \leq j-1\}$.
Consider the ordering 
\[
	\sigma_1 = (v_1, \ldots, v_j, v_{j+1}, x, v_{j+2}, \ldots, v_{n-1})
\]
of $T$ and let $G_1 = B_{\sigma_1}(T)$.
If $x$ has no neighbour in $G_1$, then $\sigma_1$ is a paving ordering of $T$.
Otherwise, let $v_h$ be the unique neighbour of $x$ in $G_1$.
Then $h = \textsf{out}^1(x, \sigma)$ and $v_h$ is the $\sigma_1$-left-neighbour of $x$.
Then $\sigma_1$ is a paving ordering of $T$ unless $v_h$ has a $\sigma_1$-right-neighbour other than $x$.
However, if $\sigma_1$ is not a paving ordering of $T$, then all vertices in $T$ except $v_h$ are paved and $v_h$ is nearly paved in $G_1$.
Thus, by~\Cref{Theorem:two-neighbours-same-direction}, $T$ is a paving tournament.

Suppose that $v_{j+1}$ is an out-neighbour of $x$.
Then $\textsf{out}^1(x, \sigma) = j+1$, so $\{v_k : k \leq j-1\} \Rightarrow x$ and~$x$ has at most one in-neighbour in $\{v_k : k \geq j+2\}$.
In particular, if we let $j' = \textsf{in}^1(x, \sigma)$, then~$i=j+3$ when $j' = j+2$ and $i=j+2$ otherwise.
Now, consider the ordering
\[
    \sigma_2 = (v_1, \ldots, v_j, x, v_{j+1}, \ldots, v_{n-1})
\]
of $T$ and let $G_2 = B_{\sigma_2}(T)$.
Similar to the first case, if $\sigma_2$ is not a paving ordering of $T$, then all vertices other than $v_{j'}$ are paved and $v_{j'}$ is nearly paved in $G_2$.
Thus, we can again apply~\Cref{Theorem:two-neighbours-same-direction} to conclude that $T$ is a paving tournament.
Therefore, we conclude that $T$ is a paving tournament in both cases.

\begin{customcase}{2}
	There is no vertex between $v_i$ and $v_j$ in $\sigma$, that is, $i=j+1$.
\end{customcase}

Consider the ordering $\tau = (v_1, \ldots, v_j, x, v_{j+1}, \ldots, v_n)$ of $T$ and let $H = B_\tau(T)$.
If $\tau$ is a paving ordering of $T$, then we are done.
Thus, assume that this is not true.
Let $h = \textsf{out}^1(x, \sigma)$ and $k = \textsf{in}^1(x, \sigma)$.
Observe that $v_h, v_k$ are the only vertices which are adjacent to $x$ in $H$ and, in particular, $x$ is paved in $H$.
Hence, our assumption shows that $v_{h}$ has a~$\tau$-right-neighbour other than $x$ or $v_{k}$ has a~$\tau$-left-neighbour $x$ other than $x$ in $H$.
Observe that if exactly one of $v_h, v_k$ is not paved in~$H$, then~$T$ is a paving tournament by~\Cref{Theorem:two-neighbours-same-direction}.
Hence, we may assume that both $v_h$ and $v_k$ are not paved in $H$.
Let $v_{h'}$ be a $\tau$-right-neighbour of $v_h$ and $v_{k'}$ be a $\tau$-left-neighbour of $v_k$ in $H$, both of which are different from $x$.

\begin{customclaim}{1} \label{Claim:applying-reshuffling}
    There is a unique vertex between $v_h$ and $\min_\tau\{x, v_{h'}\}$ in $\tau$.
    Similarly, there is a unique vertex between $v_k$ and $\max_\tau\{x, v_{k'}\}$ in $\tau$.
\end{customclaim}
\begin{subproof}[Proof of~\Cref{Claim:applying-reshuffling}]
	We only prove the first statement since the proof of the second statement is similar.
    Let $z = \min_\tau\{x, v_{h'}\}$ and $z'= \max_\tau\{x, v_{h'}\}$.
    If $z = v_{h'}$, then there is a vertex between $v_h$ and $z$ in~$\tau$ by~\labelcref{Condition:Paving1}; if $z=x$, then $v_j$ is between $v_h$ and $z$.
   Now, suppose to the contrary that there are two distinct vertices $u, u'$ between $v_h$ and $z$ in $\tau$.
	Since $\sigma$ is a paving ordering of $T \setminus x$, the property~\labelcref{Condition:Paving2} shows that $v_h \Rightarrow \{u, u'\} \Rightarrow v_{h'}$; by the choice of $h$ and $j$, we have $\{u, u'\} \Rightarrow x$.
	Therefore, the five vertices $u, u', v_h, v_{h'}, x$ induce a copy of $\Delta(1, 2, 2)$ in $T$ since $v_h \Rightarrow \{u, u'\} \Rightarrow \{v_{h'}, x\}, \Rightarrow v_h$ and we reach a contradiction.
\end{subproof}

We consider the following subcases according to the relative positions of $v_{h'}$ and $v_{k'}$ in $\tau$.

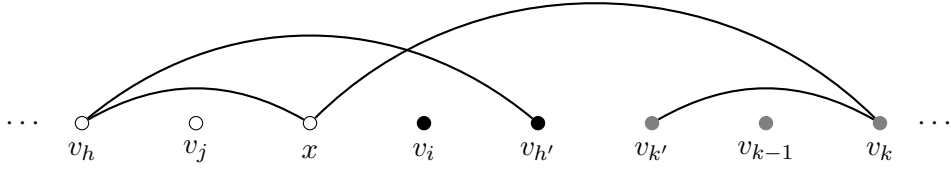
\begin{figure}[t]
    \centering
        \begin{tikzpicture}[scale=0.75]
            \tikzset{vertex/.style = {shape=circle, fill=black, draw,minimum size=5pt, inner sep=0pt}}
            \tikzset{arc/.style = {->,thick, > = stealth}}
            \tikzset{edge/.style = {thick}}

            \node[vertex, fill=white] (h) at (-6, 0) {};
            \node[vertex, fill=white] (i) at (-4, 0) {};
            \node[vertex, fill=white] (x) at (-2, 0) {};
            \node[vertex] (i) at (0, 0) {};
            \node[vertex] (h') at (2, 0) {};
            \node[vertex, color=gray] (k') at (4, 0) {};
            \node[vertex, color=gray] (k-1) at (6, 0) {};
            \node[vertex, color=gray] (k) at (8, 0) {};

            \node at (-6, -0.5) {$v_h$};
            \node at (-4, -0.5) {$v_j$};
            \node at (-2, -0.5) {$x$};
            \node at (0, -0.5) {$v_i$};
            \node at (2, -0.5) {$v_{h'}$};
            \node at (4, -0.5) {$v_{k'}$};
            \node at (6, -0.5) {$v_{k-1}$};
            \node at (8, -0.5) {$v_k$};

            \node at (-7, 0) {$\cdots$};
            \node at (9, 0) {$\cdots$};
            
            \draw[edge, bend left=30] (h) to (x);
            \draw[edge, bend left=40] (h) to (h');
            \draw[edge, bend left=45] (x) to (k);
            \draw[edge, bend left=30] (k') to (k);
        \end{tikzpicture}
    \caption{\Cref{Subcase:2.1} in the proof of~\Cref{Lemma:no-fuzzy-part}. We apply~\Cref{Lemma:Reshuffling} to two sets of three vertices of the same colour.}
    \label{Figure:Subcase-2.1}
\end{figure} 

\begin{customsubcase}{2.1} \label{Subcase:2.1}
    $h' \leq k'$. (See~\Cref{Figure:Subcase-2.1} for an illustration.)
\end{customsubcase}

We can apply~\Cref{Lemma:Reshuffling} twice for vertices $v_h, v_{h+1}, \min_\tau\{x, v_{h'}\}$ and $v_{k-1}, v_k, \max_\tau\{x, v_{k'}\}$ in this case to obtain a paving ordering of $T$.

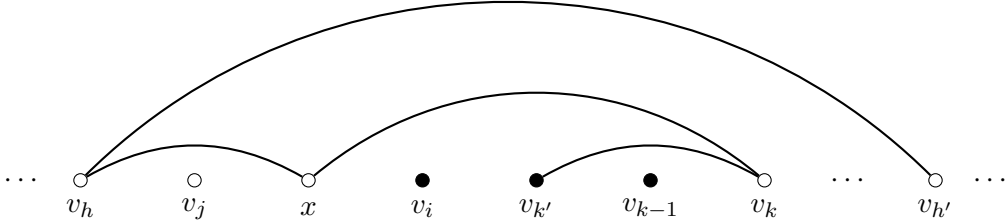
\begin{figure}[t]
    \centering
        \begin{tikzpicture}[scale=0.75]
            \tikzset{vertex/.style = {shape=circle, fill=black, draw,minimum size=5pt, inner sep=0pt}}
            \tikzset{arc/.style = {->,thick, > = stealth}}
            \tikzset{edge/.style = {thick}}

            \node[vertex, fill=white] (h) at (-6, 0) {};
            \node[vertex, fill=white] (i) at (-4, 0) {};
            \node[vertex, fill=white] (x) at (-2, 0) {};
            \node[vertex] (i) at (0, 0) {};
            \node[vertex] (k') at (2, 0) {};
            \node[vertex] (k-1) at (4, 0) {};
            \node[vertex, fill=white] (k) at (6, 0) {};
            \node[vertex, fill=white] (h') at (9, 0) {};

            \node at (-6, -0.5) {$v_h$};
            \node at (-4, -0.5) {$v_j$};
            \node at (-2, -0.5) {$x$};
            \node at (0, -0.5) {$v_{i}$};
            \node at (2, -0.5) {$v_{k'}$};
            \node at (4, -0.5) {$v_{k-1}$};
            \node at (6, -0.5) {$v_{k}$};
            \node at (9, -0.5) {$v_{h'}$};

            \node at (-7, 0) {$\cdots$};
            \node at (10, 0) {$\cdots$};
            \node at (7.5, 0) {$\cdots$};
            
            \draw[edge, bend left=30] (h) to (x);
            \draw[edge, bend left=45] (h) to (h');
            \draw[edge, bend left=40] (x) to (k);
            \draw[edge, bend left=30] (k') to (k);
        \end{tikzpicture}
    \caption{\Cref{Subcase:2.2} in the proof of~\Cref{Lemma:no-fuzzy-part}. Five white vertices induce a copy of $\Delta(1, 2, 2)$ in $T$.}
    \label{Figure:Subcase-2.2}
\end{figure} 

\begin{customsubcase}{2.2} \label{Subcase:2.2}
    Either $h' > k$ or $k' < h$. (See~\Cref{Figure:Subcase-2.2} for an illustration.)
\end{customsubcase}

We only consider the case $h' > k$ as the proof of the other case is similar.
Consider the five vertices~$x, v_h, v_{h'}, v_j, v_k$.
Then $\{x, v_{h'}\} \Rightarrow v_h$ and, since $\sigma$ is a paving ordering, we have $v_h \Rightarrow \{v_j, v_k\} \Rightarrow v_{h'}$.
In addition, by the choice of $j$ and $k$, we have $\{v_j, v_k\} \Rightarrow x$.
In summary, we have $v_h \Rightarrow \{v_j, v_k\} \Rightarrow \{v_{h'}, x\} \Rightarrow v_h$ so $T$ contains a copy of $\Delta(1, 2, 2)$, a contradiction.

\begin{customsubcase}{2.3} \label{Subcase:2.3}
    Either $h < k' < h' \leq j$ or $i \leq k' < h' < k$.
\end{customsubcase}

We only consider the case $h < k' < h' \leq j$ as the proof of the other case is similar.
By~\Cref{Claim:applying-reshuffling}, we have $k' = h+1$, $h' = h+2$, and $i = k-1$.
Thus, we can apply~\Cref{Lemma:Reshuffling} to vertices $v_h, v_{k'}, v_{h'}$ and $v_i, v_k, x$ to obtain a paving ordering of $T$.

\begin{customsubcase}{2.4} \label{Subcase:2.4}
    $h < k' \leq j$ and $i \leq h' < k$.
\end{customsubcase}

\Cref{Claim:applying-reshuffling} implies that $h' = i$ and $k' = j$.
Thus, the five vertices $x, v_h, v_{h'}, v_k, v_{k'}$ induce a copy of $T_5$ in $T$ and we reach a contradiction.

\begin{figure}[t]
    \centering
    \captionsetup{justification=centering}
        \begin{tikzpicture}[scale=0.75]
            \tikzset{vertex/.style = {shape=circle, fill=black, draw,minimum size=5pt, inner sep=0pt}}
            \tikzset{arc/.style = {->,thick, > = stealth}}
            \tikzset{edge/.style = {thick}}

            \node[vertex] (h) at (-8.5, 0) {};
            \node[vertex] (i) at (-7, 0) {};
            \node[vertex] (x) at (-5.5, 0) {};
            \node[vertex] (j) at (-4, 0) {};
            \node[vertex] (k) at (-2.5, 0) {};

            \node[vertex, fill=white] (i1) at (2.5, 0) {};
            \node[vertex, fill=white] (k1) at (4, 0) {};
            \node[vertex, fill=white] (x1) at (5.5, 0) {};
            \node[vertex, fill=white] (h1) at (7, 0) {};
            \node[vertex] (j1) at (8.5, 0) {};
            \node[vertex, fill=white] (w) at (10.5, 0) {};

            \node at (-8.5, -0.5) {$v_h$};
            \node at (-7, -0.5) {$v_j$};
            \node at (-5.5, -0.5) {$x$};
            \node at (-4, -0.5) {$v_i$};
            \node at (-2.5, -0.5) {$v_k$};

            \node at (-9.5, 0) {$\cdots$};
            \node at (-1.5, 0) {$\cdots$};
            \node at (9.5, 0) {$\cdots$};
            \node at (11.5, 0) {$\cdots$};
            \node at (1.5, 0) {$\cdots$};

            \node at (0, 0) {$\longrightarrow$};

            \node at (2.5, -0.5) {$v_j$};
            \node at (4, -0.5) {$v_k$};
            \node at (5.5, -0.5) {$x$};
            \node at (7, -0.5) {$v_h$};
            \node at (8.5, -0.5) {$v_i$};
            
            \draw[edge, bend left=45] (h) to (x);
            \draw[edge, bend left=45] (h) to (k);
            \draw[edge, bend left=45] (x) to (k);

            \draw[edge, bend left=45] (i1) to (h1);
            \draw[edge, bend left=45] (k1) to (j1);
            \draw[edge, bend left=45] (i1) to (w);
        \end{tikzpicture}
    \caption{Ordered graphs $H$ (left) and $H'$ (right) from~\Cref{Subcase:2.5} in the proof of~\Cref{Lemma:no-fuzzy-part}. Five white vertices induce a copy of $\Delta(1, 2, 2)$ in $T$ when $v_j$ has a $\tau'$-right-neighbour in $\{v_t : t \geq k+1\}$.}
    \label{Figure:Subcase-2.5}
\end{figure} 

\begin{customsubcase}{2.5} \label{Subcase:2.5}
    $h' = k$ and $k' = h$. (See~\Cref{Figure:Subcase-2.5} for an illustration.)
\end{customsubcase}

By~\Cref{Claim:applying-reshuffling}, the five vertices $v_h, v_j, x, v_i, v_k$ are consecutive in $\tau$ along this order.
Let $X = \{v_h, v_{i}, x, v_j, v_{k}\}$.
Since $\sigma$ is a paving ordering of $T$, only the adjacency between $v_i$ and $v_j$ is so far undetermined in $H[X]$.
However, if $v_i v_j \in E(H)$, then $T[X]$ is isomorphic to $T_5$, a contradiction.
Hence, $v_i$ and $v_j$ are not adjacent in $H$, that is, $v_j v_i \in E(T)$.

Consider an ordering
\[
    \tau' = (v_1, \ldots, v_{h-1}, v_j, v_k, x, v_h, v_i, v_{k+1}, \ldots, v_{n-1})
\]
of $T$, which is obtained from $\tau$ by reordering the vertices in $X$.
We claim that $\tau'$ is a paving ordering of $T$.
Let $H' = B_{\tau'}(T)$.
By~\Cref{Proposition:reordering-interval}, all vertices in $V(T) \setminus X$ are paved in $H'$.
Moreover, it is not hard to check that $E(H'[X]) = \{v_h v_j, v_i v_k\}$.
Thus, if $\tau'$ is not a paving ordering of $T$, then one of $v_j, v_k$ has a $\tau'$-right-neighbour in $\{v_t : t \geq k+1\}$, or one of $v_h, v_i$ has a $\tau'$-left-neighbour in~$\{v_t : t \leq h-1\}$.
However, 

\begin{itemize}
    \item if $v_j$ has a $\tau'$-right-neighbour $v_\ell$ with $\ell \geq k+1$, then we have $v_j \Rightarrow \{v_k, x\} \Rightarrow \{v_{h}, v_\ell\} \Rightarrow v_j$;
    \item if $v_k$ has a $\tau'$-right-neighbour $v_p$ with $p \geq k+1$, then we have $v_k \Rightarrow \{v_h, x\} \Rightarrow \{v_i, v_p\} \Rightarrow v_k$;
    \item if $v_h$ has a $\tau'$-left-neighbour $v_q$ with $q \leq h-1$, then $v_h \Rightarrow \{v_j, v_q\} \Rightarrow \{v_k, x\} \Rightarrow v_h$; and
    \item if $v_i$ has a $\tau'$-left-neighbour $v_r$ with $r \leq h-1$, then $v_i \Rightarrow \{v_k, v_r\} \Rightarrow \{v_h, x\} \Rightarrow v_i$.
\end{itemize}
Thus, $T$ contains a copy of $\Delta(1, 2, 2)$ in any case and we reach a contradiction.
Hence, we conclude that $\tau'$ is a paving ordering of $T$.

We have shown that $T$ is a paving tournament in all cases and this completes the proof.
\end{proof}

Finally, we complete the proof of~\Cref{Theorem:two-nbrs-in-both-direction}.

\begin{proof}[Proof of~\Cref{Theorem:two-nbrs-in-both-direction}]
	Let $n$ be the number of vertices of $T$, let $\sigma = (v_1, \ldots, v_{n-1})$ be a paving ordering of $T \setminus x$, and let $i = \textsf{out}^2(x, \sigma)$ and $j = \textsf{in}^2(x, \sigma)$. 
	If $i>j$, then $T$ is a paving tournament by~\Cref{Lemma:no-fuzzy-part}.
	Thus, assume $i<j$.
  
	By~\Cref{Lemma:Fuzzy-lemma}, we have $j \leq i+3$.
	Let $h = \textsf{out}^1(x, \sigma)$ and $k = \textsf{in}^1(x, \sigma)$.
	Let $G = B_\sigma(T \setminus x)$.
	Let~$F$ be the set of edges between $\{v_h, v_i\}$ and $\{v_j, v_k\}$ in $G$.
	If $F = \emptyset$, then five vertices $x, v_h, v_i, v_j, v_k$ induce a copy of $\Delta(1, 2, 2)$ in $T$ as $x \Rightarrow \{v_h, v_i\} \Rightarrow \{v_j, v_k\} \Rightarrow x$.
    Thus, $F$ is not empty.
	Moreover, since $\sigma$ is a paving ordering of $T \setminus x$, the edges in $F$ form a matching in $G$.

    We consider the following five cases according to the possible edges in $F$.

	\begin{customcase}{1} \label{Case:nested-matching}
		$F = \{v_h v_k, v_i v_j\}$.
	\end{customcase}
	
	Since $v_i$ and $v_j$ satisfy~\labelcref{Condition:Paving2} in $G \setminus x$, we have $j \geq i+2$.
	Consider the set of five vertices $X_1 = \{v_h, v_i, v_{i+1}, v_k, x\}$.
    Under our assumptions, the adjacency between $x$ and $v_{i+1}$ is as of yet undetermined in $G$.
    However, if $xv_{i+1} \in E(T)$, then $E(G[X_1]) = \{x v_k, v_h v_k\}$, which shows that $v_k \Rightarrow \{v_h, x\} \Rightarrow \{v_i, v_{i+1}\} \Rightarrow v_k$.
    Thus, $T$ contains $\Delta(1, 2, 2)$.
    Otherwise, we have $E(G[X_1]) = \{xv_{i+1}, x v_k, v_h v_k\}$ so~$X_1$ induces a copy of $T_5$ in $T$.
    Thus, we reach a contradiction in either case.
	
	\begin{customcase}{2} \label{Case:crossing-matching}
		$F = \{v_h v_j, v_i v_k\}$.	
	\end{customcase}

	When $j \geq i+2$, let $X_2 = \{x, v_h, v_i, v_{j-1}, v_j\}$.
    By possibly reversing all edges as well as the ordering $\sigma$, we may assume that $xv_{j-1}$ is an edge of $T$.
    Then in $G$, $v_{j-1}$ is not adjacent to any other vertices in~$X_2$ by~\labelcref{Condition:Paving1} and~\labelcref{Condition:Paving2}.
    Thus, we have $v_j \Rightarrow \{v_h, x\} \Rightarrow \{v_i, v_{j-1}\} \Rightarrow v_j$ so $T$ contains $\Delta(1, 2, 2)$.
	Hence, the only possible case is $j=i+1$.
	
	\begin{customclaim}{1} \label{Claim:at-most-one-isolated-vertex}
		There is at most one vertex between $v_h$ and $v_i$, and also between $v_j$ and $v_k$.
	\end{customclaim}
	\begin{subproof}[Proof of~\Cref{Claim:at-most-one-isolated-vertex}]
		We only prove this for $v_j$ and $v_k$ as the proof for~$v_h, v_i$ is similar.	
        To show the first statement, suppose to the contrary that $k \geq j+3$.
        Let $Y = \{x, v_i, v_{j+1}, v_{j+2}, v_k\}$.
        Since $\sigma$ is a paving ordering of $T \setminus x$, we have $v_i \Rightarrow \{v_{j+1}, v_{j+2}\} \Rightarrow v_k$ by~\Cref{Proposition:paving}.
        Moreover, by the choices of $j$ and $k$, we have $x \Rightarrow \{v_{j+1}, v_{j+2}\}$; by the choice of $i$ and our assumption, we have $v_k \Rightarrow \{x, v_i\}$.
        In summary, the vertices in $Y$ satisfy $v_k \Rightarrow \{v_i, x \} \Rightarrow \{v_{j+1}, v_{j+2}\} \Rightarrow v_k$ so $Y$ induces a copy of $\Delta(1, 2, 2)$ in $T$ and we reach a contradiction.
        Thus, $k \leq j+2$.
	\end{subproof}

    Observe that by the choice of $h$, $i$, $j$, and $k$, the vertex between $v_h$ and $v_i$, if it exists, is an in-neighbour of $x$ and the vertex between $v_j$ and $v_k$, if it exists, is an out-neighbour of $x$.
	Let~$L = \{x_\ell : \ell \leq h-1\}$ and $R = \{x_m : m \geq k+1\}$.
    Since $\sigma$ is a paving ordering of $T \setminus x$, the vertices~$v_h, v_i$ have no $\tau$-right-neighbour in $R$ and the vertices $v_j, v_k$ have no $\tau$-left-neighbour in $L$.
	However, each~$v_h, v_i$ might have a $\tau$-left-neighbour in $L$ and each $v_j, v_k$ might have a $\tau$-right-neighbour in $R$.

    We consider the following three subcases according to~\Cref{Claim:at-most-one-isolated-vertex}.
	
	\begin{customsubcase}{2.1}
        We have $i=h+1$ and $k = j+1$.
    \end{customsubcase} 

    Let $X_{2, 1} = \{v_h, v_i, v_j, v_k, x\}$, let
	\[
		\tau_{2, 1} = (v_1, \ldots, v_{h-1}, v_j, x, v_h, v_k, v_i, v_{k+1}, \ldots, v_{n-1})
	\]
	be an ordering of $T$, and let $G_{2, 1} $ be the backedge graph of $T$ with respect to $\tau_{2, 1}$.    
    Observe that $E(G_{2, 1}[X_{2, 1}]) = \{v_i v_j, xv_k\}$, and every vertex in $V(T) \setminus X_{2, 1}$ is paved in $G_{2, 1}$ by~\Cref{Proposition:reordering-interval}.
    Thus, if~$\tau_{2, 1}$ is not a paving ordering of $T$, then either $v_i$ has a~$\tau_{2, 1}$-left-neighbour in $L$ or $v_j$ has a~$\tau_{2, 1}$-right-neighbour in $R$.
    However, when the former holds, let $\ell$ be such a left-neighbour of $v_i$ in $L$.
    Then $v_i \Rightarrow \{\ell, v_j\} \Rightarrow \{v_k, x\} \Rightarrow v_i$ so $T$ contains~$\Delta(1, 2, 2)$, which is a contradiction.
    Similarly, when the latter holds, let $r$ be such a right-neighbour of $v_j$ in $R$.
    Then $v_j \Rightarrow \{v_h, x\} \Rightarrow \{r, v_i\} \Rightarrow v_j$ so $T$ contains $\Delta(1, 2, 2)$ and we again reach a contradiction.
    Thus, every vertex in $X_{2, 1}$ is paved in $\tau_{2, 1}$.
	Therefore, by~\Cref{Lemma:only-paving}, $\tau_{2,1}$ is a paving ordering of $T$.
	
    \begin{customsubcase}{2.2}
        We have $i=h+1$ and $k = j+2$, or $i=h+2$ and $k=j+1$.
    \end{customsubcase}

	We only consider the first case as a similar consideration works for the second case.
	By~\Cref{Proposition:paving}, $v_{j+1}$ is adjacent to none of $x, v_h, v_i, v_j, v_k$ in $G$.
	Moreover, $v_{j+1}$ might have a $\tau$-left-neighbour in $L$ or a $\tau$-right-neighbour in $R$.
	Let $X_{2, 2} = \{v_h, v_i, v_j, v_{j+1} v_k, x\}$, let
	\[
		\tau_{2, 2} = (v_1, \ldots, v_{h-1}, v_j, x, v_h, v_k, v_i, v_{j+1}, \ldots, v_{n-1})
	\]
	be an ordering of $T$, and let $G_{2, 2}$ be the backedge graph of $T$ with respect to $\tau_{2, 2}$.
	Then $E(G_{2, 2}[X_{2, 2}]) = \{v_i v_j, x v_k, v_{j+1} v_k\}$.
    Thus, if $\tau_{2, 2}$ is not a paving ordering of $T$, then either one of $v_i, v_{j+1}$ has a $\tau_{2, 2}$-left-neighbour in $L$ or one of $v_j, v_k$ has a~$\tau_{2, 2}$-right-neighbour in $R$.
    However,
    \begin{itemize}
        \item if $v_i$ has a left-neighbour $\ell_1$ in $L$, then we have $v_i \Rightarrow \{v_j, \ell_1\} \Rightarrow \{v_h, v_k\} \Rightarrow v_i$;
        \item if $v_{j+1}$ has a left-neighbour $\ell_2$ in $L$, then $v_{j+1} \Rightarrow \{v_k, \ell_2\} \Rightarrow \{v_i, x\} \Rightarrow v_{j+1}$;
        \item if $v_j$ has a right-neighbour $r_1$ in $R$, then $v_j \Rightarrow \{v_h, v_k\} \Rightarrow \{v_i, r_1\} \Rightarrow v_j$; and
        \item if $v_k$ has a right-neighbour $r_2$ in $R$, then $v_k \Rightarrow \{v_i, x\} \Rightarrow \{v_{j+1}, r_2\} \Rightarrow v_k$.
    \end{itemize}
    Thus, $T$ contains $\Delta(1, 2, 2)$ in every case and we reach a contradiction.
    Hence, we conclude that $\tau_{2, 2}$ is a paving ordering of $T$.
	
    \begin{customsubcase}{2.3}
        We have $i=h+2$ and $k = j+2$.
    \end{customsubcase}

    In this subcase, let $X_{2, 3} = \{x, v_h, v_{h+1}, v_i, v_j, v_{j+1}, v_k\}$.
    Consider the ordering
    \[
		\tau_{2, 3} = (v_1, \ldots, v_{h-1}, v_{h+1}, v_j, x, v_h, v_k, v_i, v_{j+1}, v_{k+1}, \ldots, v_{n-1})
	\]
    of $T$, and let $G_{2, 3}$ be a backedge graph of $T$ with respect to $\tau_{2, 3}$. 
    Note that the adjacency between $v_{h+1}$ and $v_{j+1}$ is not determined yet under the assumption.
    However, if $v_{j+1}v_{h+1} \in E(T)$, then $v_{h+1} \Rightarrow \{v_j, x\} \Rightarrow \{v_h, v_{j+1}\} \Rightarrow v_{h+1}$ so $T$ contains~$\Delta(1, 2, 2)$.
    Thus, $v_{h+1} v_{j+1} \in E(T)$ and $E(G_{2, 3}[X_{2, 3}]) = \{v_h v_{h+1}, v_i v_j, v_{j+1} v_k, v_k x\}$.

    To show that $\tau_{2, 3}$ is a paving ordering of $T$, suppose this is not true.
    Then either one of $v_h, v_i, v_{j+1}$ has a $\tau_{2, 3}$-left-neighbour in $L$ or one of $v_{h+1}, v_j, v_k$ has a $\tau_{2, 3}$-right-neighbour in $R$.
    However,
    \begin{itemize}
        \item if $v_h$ has a left-neighbour $\ell_1$ in $L$, then $v_h \Rightarrow \{\ell_1, v_{h+1}\} \Rightarrow \{v_j, x\} \Rightarrow v_h$;
        \item if $v_i$ has a left-neighbour $\ell_2$ in $L$, then $v_i \Rightarrow \{\ell_2, v_j\} \Rightarrow \{v_h, v_k\} \Rightarrow v_i$;
        \item if $v_{j+1}$ has a left-neighbour $\ell_3$ in $L$, then $v_{j+1} \Rightarrow \{\ell_3, v_k\} \Rightarrow \{v_i, x\} \Rightarrow v_{j+1}$;
        \item if $v_{h+1}$ has a right-neighbour $r_1$ in $R$, then $v_{h+1} \Rightarrow \{v_j, x\} \Rightarrow \{r_1, v_h\} \Rightarrow v_{h+1}$;
        \item if $v_{j}$ has a right-neighbour $r_2$ in $R$, then $v_j \Rightarrow \{v_h, v_k\} \Rightarrow \{r_2, v_i\} \Rightarrow v_j$;
        \item if $v_{k}$ has a right-neighbour $r_3$ in $R$, then $v_k \Rightarrow \{v_i, x\} \Rightarrow \{r_3, v_{j+1}\} \Rightarrow v_k$.
    \end{itemize}
    Thus, $T$ contains $\Delta(1, 2, 2)$ in every case and we reach a contradiction.
    Hence, we conclude that $\tau_{2, 3}$ is a paving ordering of $T$.
	
	\begin{customcase}{3}
		$F = \{v_h v_j\}$ or $F = \{v_i v_k\}$.	
	\end{customcase}

    There is symmetry between the two cases, so we only consider the first case.
    First, we show the following two claims.

    \begin{customclaim}{1} \label{Claim:applying-reshuffling2}
        There is no vertex between $v_i$ and $v_j$, that is, $j=i+1$.
    \end{customclaim}
    \begin{subproof}[Proof of~\Cref{Claim:applying-reshuffling2}]
    
        Suppose to the contrary that $j \geq i+2$. Let $Y' = \{v_h, v_i, v_{i+1}, v_j, x\}$.
        Note that only the adjacency between $x$ and $v_{i+1}$ is undetermined in $T[Y']$ under our assumptions.
	    However, if~$xv_{i+1} \in E(T)$, then $v_j \Rightarrow \{v_h, x\} \Rightarrow \{v_i, v_{i+1}\} \Rightarrow v_j$.
        Thus,~$T[Y']$ is isomorphic to~$\Delta(1, 2, 2)$. Otherwise, $T[Y']$ is isomorphic to~$T_5$.
	    Thus, we reach a contradiction in either case, so we conclude that~$j=i+1$.
    \end{subproof}

    \begin{customclaim}{2} \label{Claim:applying-reshuffling3}
        There is at most one vertex between $v_h$ and $v_i$ in $\tau$, that is, $i \leq h+2$.
    \end{customclaim}
    \begin{subproof}[Proof of~\Cref{Claim:applying-reshuffling3}]
        Suppose to the contrary that $i \geq h+3$. By~\Cref{Proposition:paving} applied to $v_h v_j \in E(G)$, we have $v_h \Rightarrow \{v_{h+1}, v_{h+2}\} \Rightarrow v_j$.
        Moreover, by the choice of $h$ and $i$, we have $\{v_{h+1}, v_{h+2}\} \Rightarrow x$.
        In summary, we have $v_h \Rightarrow \{v_{h+1}, v_{h+2}\} \Rightarrow \{v_j, x\} \Rightarrow v_h$.
        Thus, the five vertices $v_h, v_{h+1}, v_{h+2}, v_j, x$ induce a copy of $\Delta(1, 2, 2)$ and we reach a contradiction.
    \end{subproof}

    Define an ordering of $T$ as follows.
    \begin{itemize}
        \item When $i = h+1$, take
            \[
                \tau_{3, 1} = (v_1, \ldots, v_{h-1}, v_j, x, v_h, v_i, v_{j+1}, \ldots, v_{n-1});
            \]
        \item When $i = h+2$, take
            \[
                \tau_{3, 2} = (v_1, \ldots, v_{h-1}, v_{h+1}, v_j, x, v_h, v_i, v_{j+1}, \ldots, v_{n-1}).
            \]
    \end{itemize}

    We only prove the case $i=h+2$ since a similar consideration works for the other case.
    Assuming~$i=h+2$, let $X_3 = \{v_h, v_{h+1}, v_i, v_j, x\}$ and let $G_3$ be a backedge graph of $T$ with respect to $\tau_{3, 2}$.
    Then~$E(G_3[X_3]) = \{v_h v_{h+1}, v_i v_j\}$ and $xv_k \in E(G_3)$.
    Suppose that $\tau_{3, 2}$ is a not paving ordering of $T$.
    Then either one of $v_h, v_i, v_k$ has a $\tau_{3, 2}$-left-neighbour or one of $v_{h+1}, v_j$ has a~$\tau_{3, 2}$-right-neighbour, both of which are not in $X_3$.
    However,
    \begin{itemize}
        \item if $v_h$ has a left-neighbour $\ell_1$ in $L$, then $v_h \Rightarrow \{\ell_1, v_{h+1}\} \Rightarrow \{v_j, x\} \Rightarrow v_h$;
        \item if $v_i$ has a left-neighbour $\ell_2$ in $L$, then $v_i \Rightarrow \{\ell_2, v_j\} \Rightarrow \{v_h, x\} \Rightarrow v_i$;
        \item if $v_{h+1}$ has a right-neighbour $r_1$, then $v_{h+1} \Rightarrow \{v_j, x\} \Rightarrow \{r_1, v_h\} \Rightarrow v_{h+1}$; and
        \item if $v_j$ has a right-neighbour $r_2$, then $v_j \Rightarrow \{v_h, x\} \Rightarrow \{r_2, v_i\} \Rightarrow v_j$.
    \end{itemize}
    Thus, the only possibility is that $v_k$ has a left-neighbour in $V(T) \setminus X_3$.
    However, in this case, all vertices other than $v_k$ is paved and $v_k$ is nearly paved in $\tau_{3, 2}$, so $T$ is a paving tournament by~\Cref{Theorem:two-neighbours-same-direction}.
	
	\begin{customcase}{4}
		$F = \{v_h v_k\}$.	
	\end{customcase}

  	Since $x$ is adjacent to $v_i$ and adjacent from $v_j$ in $T$, there exists an integer $i'$ with $i \leq i' \leq j-1$ such that $x$ is adjacent to $v_{i'}$ and adjacent from $v_{i'+1}$.
	Thus, if we let $X_4 = \{x, v_h, v_{i'}, v_{i'+1}, v_k\}$, then~$E(G[X_4]) = \{v_h v_k, xv_{i'+1}, xv_k\}$ so $T[X_4]$ is isomorphic to $T_5$, a contradiction.
	
	\begin{customcase}{5}
		$F = \{v_i v_j\}$.	
	\end{customcase}
	
    In this case, we have $j \geq i+2$ by~\labelcref{Condition:Paving2}.
    Recall that $j \leq i+3$.
    Thus, we consider the following two subcases according to the value of $j$.
    
    \begin{customsubcase}{5.1} \label{Case:5.1}
        When $j=i+2$.
    \end{customsubcase}
    
    By possibly reversing all edges, we may assume that $x v_{i+1} \in E(T)$. 
    Note that $v_h v_k \in E(T)$ as~$F = \{v_i v_j\}$.
    Thus, if $v_{i+1}v_k \in E(T)$, then $x \Rightarrow \{v_h, v_{i+1}\} \Rightarrow \{v_j, v_k\} \Rightarrow x$ so $T$ contains $\Delta(1, 2, 2)$.
    This shows that $v_k v_{i+1} \in E(T)$.

    Note that only the adjacency between $v_h$ and $v_{i+1}$ in $T$ is not determined yet by our assumption.
    If $v_{i+1} v_h \in E(T)$, then the five vertices $v_h, v_i, v_{i+1}, v_j, v_k$ induce a copy of $\Delta(1, 2, 2)$ in $T$ as~$v_{i+1} \Rightarrow \{v_h, v_j\} \Rightarrow \{v_i, v_k\} \Rightarrow v_{i+1}$.
    Thus, $v_h v_{i+1} \in E(T)$.
    Moreover, if $k \geq j+3$, then the five vertices~$x, v_j, v_{j+1}, v_{j+2}, v_k$ induce a copy of $\Delta(1, 2, 2)$ in $T$ as $v_k \Rightarrow \{v_{i+1}, x\} \Rightarrow \{v_{j+1}, v_{j+2}\} \Rightarrow v_k$.
    Thus, $k \leq j+2$.

    We define an ordering of $T$ as follows.
    \begin{itemize}
        \item When $k = j+1$, take
        \[
            \tau_{5}^1 = (v_1, \ldots, v_h, v_{h+1}, \ldots, v_{i-1}, v_j, x, v_i, v_k, v_{i+1}, v_{k+1}, \ldots, v_{n-1});
        \]
        \item When $k = j+2$, take
        \[
            \tau_{5}^2 = (v_1, \ldots, v_h, v_{h+1}, \ldots, v_{i-1}, v_j, x, v_i, v_k, v_{i+1}, v_{j+1}, v_{k+1}, \ldots, v_{n-1}).
        \]
    \end{itemize}
   
    We only show that $\tau_5^2$ is a paving ordering of $T$ when $k=j+2$ since a similar consideration works for the other case.
    Assuming $k=j+2$, let $G_5$ be a backedge graph of $T$ with respect to $\tau_5^2$ and let~$X_5 = \{x, v_h, v_i, v_{i+1}, \ldots, v_k\}$.
    Then $E(G_5[X_5]) = \{v_hx, v_{i+1} v_j, v_{j+1} v_k, v_kx\}$.
    Thus, if $\tau_5^2$ is not a paving ordering of $T$, then either one of~$v_{i+1}, v_{j+1}$ has a $\tau_5^2$-left-neighbour in $V(T) \setminus X_5$ or one of~$v_h, v_j, v_k$ has a $\tau_5^2$-right-neighbour in $V(T) \setminus X_5$.
    However,
    \begin{itemize}
        \item if $v_{i+1}$ has a left-neighbour $\ell_1$ other than $v_j$, then $v_{i+1} \Rightarrow \{\ell_1, v_j\} \Rightarrow \{v_k, x\} \Rightarrow v_{i+1}$;
        \item if $v_{j+1}$ has a left-neighbour $\ell_2$ other than $v_k$ and $v_h$, then $v_{j+1} \Rightarrow \{\ell_2, v_k\} \Rightarrow \{v_{i+1}, x\} \Rightarrow v_{j+1}$;
        \item if $v_h$ is a left-neighbour of $v_{j+1}$, then the six vertices $v_h, v_{i+1}, v_j, v_{j+1}, v_k, x$ induces a copy of $P_7^-$ together with a canonical ordering $(v_h, v_j, x, v_k, v_{i+1}, v_{j+1})$;
        \item if $v_j$ has a right-neighbour $r_1$ other than $v_{i+1}$, then $v_j \Rightarrow \{v_k, x\} \Rightarrow \{r_1, v_{i+1}\} \Rightarrow v_j$; and
        \item if $v_k$ has a right-neighbour $r_2$ other than $v_{j+1}$, then $v_k \Rightarrow \{v_{i+1}, x\} \Rightarrow \{r_2, v_{j+1}\} \Rightarrow v_k$.
    \end{itemize}
    Thus, the only possibility is that $v_h$ has a $\tau_5^2$-right-neighbour which is not equal to $x$ or $v_{j+1}$.
    However, then every vertex other than $v_h$ is paved and $v_h$ is nearly paved in $\tau_5^2$.
    Hence, $T$ is a paving tournament by~\Cref{Theorem:two-neighbours-same-direction}.

    \begin{customsubcase}{5.2}
        When $j=i+3$.
    \end{customsubcase}

    Note that the adjacency between $x$ and each $v_{i+1}, v_{i+2}$ in $T$ is not determined yet by our assumptions.
    However, if~$x \Rightarrow \{v_{i+1}, v_{i+2}\}$, then~$v_j \Rightarrow \{v_i, x\} \Rightarrow \{v_{i+1}, v_{i+2}\} \Rightarrow v_j$; if~$\{v_{i+1}, v_{i+2}\} \Rightarrow x$, then~$v_i \Rightarrow \{v_{i+1}, v_{i+2}\} \Rightarrow \{x, v_j\} \Rightarrow v_i$.
    Thus, exactly one of $v_{i+1}, v_{i+2}$ is an in-neighbour of $x$.

    Suppose that $v_{i+2}$ is an in-neighbour of $x$.
    Then $x v_{i+1} \in E(T)$ so the five vertices $x, v_i, v_{i+1}, v_{i+2}, v_j$ induce a copy of $T_5$ in $T$.
    Thus, we have $v_{i+1}x, xv_{i+2} \in E(T)$.
    Now, consider the five vertices~$v_h, v_{i+2}, v_j, v_k, x$.
    If $v_{i+2}v_k \in E(T)$, then $x \Rightarrow \{v_h, v_{i+2}\} \Rightarrow \{v_j, v_k\} \Rightarrow x$ so $T$ contains a copy of $\Delta(1, 2, 2)$.
    This shows that $v_k v_{i+2} \in E(T)$.
    By applying the same argument to the vertices~$v_h, v_i, v_{i+1}, v_k, x$, we also deduce that $v_{i+1} v_h \in E(T)$.

    By applying a similar argument as in~\Cref{Case:5.1}, we have $i \leq h+2$ and $k \leq j+2$.
    Define an ordering of $T$, which is obtained from $\tau$ by reordering $x, v_h, \ldots, v_k$, as follows.
    \begin{itemize}
        \item When $i=h+1$ and $k=j+1$, take 
        \[
            \tau_5^{1, 1} = (v_1, \ldots, v_{h-1}, v_{i+1}, v_h, v_j, x, v_i, v_k, v_{i+2}, v_{k+1}, \ldots, v_{n-1});
        \]
        \item When $i=h+2$ and $k=j+1$, take 
        \[
            \tau_5^{2, 1} = (v_1, \ldots, v_{h-1}, v_{h+1}, v_{i+1}, v_h, v_j, x, v_i, v_k, v_{i+2}, v_{k+1}, \ldots, v_{n-1});
        \]
        \item When $i=h+1$ and $k=j+2$, take 
        \[
            \tau_5^{1, 2} = (v_1, \ldots, v_{h-1}, v_{i+1}, v_h, v_j, x, v_i, v_k, v_{i+2}, v_{j+1}, v_{k+1}, \ldots, v_{n-1});
        \]      
        \item When $i=h+2$ and $k=j+2$, take 
        \[
            \tau_5^{2, 2} = (v_1, \ldots, v_{h-1}, v_{h+1}, v_{i+1}, v_h, v_j, x, v_i, v_k, v_{i+2}, v_{j+1}, v_{k+1}, \ldots, v_{n-1}).
        \]
    \end{itemize}
    We only show that $\tau_5^{2, 2}$ is a paving ordering of $T$ when $i=h+2$ and $k=j+2$ since a similar consideration works for other cases.
    Assuming $i=h+2$ and $k=j+2$, let $G_5'$ be a backedge graph of~$T$ with respect to $\tau_5^{2, 2}$.
    Observe that the adjacencies between $v_{h+1}$ and $v_{i+2}$, $v_{i+1}$ and $v_{j+1}$, and~$v_{h+1}$ and $v_{j+2}$ in $T$ are not determined yet by our assumption.
    However, if $v_{i+2} v_{h+1} \in E(T)$, then the five vertices $v_{h+1}, v_i, v_{i+2}, v_j, v_k$ induce a copy of $\Delta(1, 2, 2)$ in $T$ as~$v_{i+1} \Rightarrow \{v_{h+1}, v_j\} \Rightarrow \{v_i, v_k\} \Rightarrow v_{i+2}$; if $v_{j+1} v_{i+1} \in E(T)$, then the five vertices $v_h, v_i, v_{i+1}, v_j, v_{j+1}$ induce a copy of $\Delta(1, 2, 2)$ as $v_{i+1} \Rightarrow \{v_h, v_{j}\} \Rightarrow \{v_i, v_{j+1}\} \Rightarrow v_{i+1}$.
    Thus, we have $v_{h+1} v_{i+2}, v_{i+1} v_{j+1} \in E(T)$. Furthermore, if~$v_{j+1} v_{h+1} \in E(T)$, then $v_{h+1} \Rightarrow \{x,v_{i+1}\} \Rightarrow \{v_h, v_{j+1}\} \Rightarrow v_{h+1}$ is a copy of $\Delta(1,2,2)$, a contradiction. 

    Let $X_5 = \{v_h, v_{h+1}, v_i, v_{i+1}, v_{i+2}, v_j, v_{j+1}, v_k, x\}$.
    If $\tau_5^{2, 2}$ is not a paving ordering, then either one of $v_h, v_i, v_{i+2}, v_{j+1}$ has a $\tau_5^{2, 2}$-left-neighbour in $V(T) \setminus X_5$, one of $v_{h+1}, v_{i+1}, v_j, v_k$ has a $\tau_5^{2, 2}$-right-neighbour in $V(T) \setminus X_5$. 
    However,
    \begin{itemize}
        \item if $v_{h+1}$ has a right-neighbour $r_1$ other than $v_h$, then $v_{h+1} \Rightarrow \{v_{i+1}, x\} \Rightarrow \{r_1, v_h\} \Rightarrow v_{h+1}$;
        \item if $v_{i+1}$ has a right-neighbour $r_2$ other than $v_i$, then $v_{i+1} \Rightarrow \{v_h, v_j\} \Rightarrow \{r_2, v_i\} \Rightarrow v_{i+1}$;
        \item if $v_j$ has a right-neighbour $r_3$ other than $v_{i+2}$, then $v_j \Rightarrow \{v_i, x\} \Rightarrow \{r_3, v_{i+2}\} \Rightarrow v_j$; and
        \item if $v_k$ has a right-neighbour $r_4$ other than $v_{j+1}$, then $v_k \Rightarrow \{x, v_{i+2}\} \Rightarrow \{r_4, v_{j+1}\} \Rightarrow v_k$.
    \end{itemize}
    Moreover, by symmetry, $T$ contains a copy of $\Delta(1, 2, 2)$ when one of $v_h, v_i, v_{i+2}, v_{j+1}$ has a left-neighbour in $V(T) \setminus X_5$.
    Therefore, we conclude that $\tau_5^{2, 2}$ is a paving ordering of $T$.
     
     We have shown that $T$ is a paving tournament in all cases, and this completes the proof.
\end{proof}

\section{Proof of~\Cref{Thm:main-backedge-intro}} \label{Section:backedge-structure}

In this section, we prove~\Cref{Thm:main-backedge-intro}, which we restate for the reader's convenience.

\begin{customtheorem}{1.1} \label{Thm:Backedge}
    Every $\Delta(1, 2, 2)$-free tournament has a backedge graph in which each component is either a monotone path or isomorphic to one of $H_5, H_6, H_7$.
    In particular, the components isomorphic to $H_5$ or $H_7$ are consecutive, and the vertices in each flock of components isomorphic to $H_6$ are consecutive in the ordering of the backedge graph.
\end{customtheorem}

The first step towards the proof is to observe how the copies of basic tournaments behave when we apply operations.
Roughly speaking, if a $\Delta(1, 2, 2)$-free tournament contains a basic tournament, then we cannot apply operations anymore to the vertices and edges in the copy.
We omit the proof as it is straightforward from the definition of nice vertices and bridges (see \Cref{Subsection:main} for the definitions).

\begin{observation} \label{Observation:No-nested-operations}
    Every vertex in a basic tournament is not nice.
    Moreover, every edge in a basic tournament is not a bridge.
\end{observation}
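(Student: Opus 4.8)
The plan is to verify the two assertions separately, in each case cutting the work down to a few representatives using the symmetry of the basic tournaments and then checking the relevant definition directly. Both assertions are finite computations, so the task is really one of organizing the cases economically.

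For the first assertion, recall that a vertex $v$ fails to be nice exactly when there is a vertex $x$ together with two distinct vertices $y_1,y_2$ such that $\{v,x,y_1\}$ and $\{v,x,y_2\}$ are both cyclic triangles. Since $T_5$ and $P_7$ are vertex-transitive, it suffices to exhibit one such configuration at a single vertex of each. In $T_5$ (with $v_iv_j\in E$ iff $j-i\equiv 1,2\pmod 5$), the cyclic triangles $\{v_1,v_3,v_4\}$ and $\{v_1,v_3,v_5\}$ share the pair $v_1,v_3$, so $v_1$ is not nice; in $P_7$ (with $j-i\equiv 1,2,4\pmod 7$) the cyclic triangles $\{v_1,v_2,v_4\}$ and $\{v_1,v_2,v_6\}$ share $v_1,v_2$. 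For $P_7^-$, I would write $P_7^-=P_7\setminus v_7$ and use that the automorphism $v_i\mapsto v_{2i}$ of $P_7$ (indices mod $7$) fixes $v_7$ and generates an order-three group acting transitively on each part of the degree partition $D_1=\{v_1,v_2,v_4\}$, $D_2=\{v_3,v_5,v_6\}$; thus one vertex from each part suffices. The pair $\{v_1,v_2,v_4\},\{v_1,v_2,v_6\}$ (which avoids $v_7$) handles $v_1\in D_1$, and $\{v_3,v_4,v_1\},\{v_3,v_4,v_6\}$ handles $v_3\in D_2$.

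For the second assertion I would argue through the characterization of bridges via the sets $X,Y,Z,W$ and conditions~\labelcref{Cond:Bridge1}--\labelcref{Cond:Bridge3}, rather than through~\Cref{Corollary:bridge-ends}. The key simplification is that~\labelcref{Cond:Bridge1}, namely $W=N^+(u)\cap N^-(v)=\emptyset$, fails precisely when the arc $u\to v$ is the source-to-sink edge of a transitive triangle $u\to w\to v$. Every arc of $P_7$ has such a $w$ (so~\labelcref{Cond:Bridge1} fails and no arc is a bridge), as does the ``long'' edge $v_1v_3$ of $T_5$; and since $|W|=1$ for every arc of $P_7$, the only arcs of $P_7^-=P_7\setminus v_7$ with $W=\emptyset$ are the three arcs from $D_2$ to $D_1$ whose unique transitive-triangle apex was $v_7$, namely $v_3\to v_4$, $v_5\to v_2$, $v_6\to v_1$ (one orbit under the order-three symmetry). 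After symmetry reduction this leaves exactly two orbits to inspect by hand, the ``short'' edge $v_1\to v_2$ of $T_5$ and the arc $v_3\to v_4$ of $P_7^-$; in each case one computes $X,Y,Z$ and exhibits the single vertex of $X$ having two in-neighbours in $Y\cup Z$, so that~\labelcref{Cond:Bridge2} fails.

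The individual computations are routine, so the only genuine subtlety — and the point I would flag — is that niceness of both endpoints is not enough to conclude: for the long edge $v_1v_3$ of $T_5$ one checks that $v_1$ is nice in $T_5\setminus v_3$ and $v_3$ is nice in $T_5\setminus v_1$, so \Cref{Corollary:bridge-ends} alone does not rule this edge out, and the full list of conditions~\labelcref{Cond:Bridge1}--\labelcref{Cond:Bridge3} is needed. Organizing the bridge check around whether $W$ is empty is what confines the remaining casework to the two small orbits above.
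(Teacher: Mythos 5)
Your proof is correct, and it is essentially the paper's (omitted) argument: the authors state this observation without proof, calling it straightforward from the definitions, and your direct finite verification---reduced to a few representative vertices and arcs via the vertex-transitivity of $T_5$ and $P_7$, the order-three symmetry of $P_7^-$, and the characterization of bridges through the sets $X, Y, Z, W$---is exactly the kind of check they had in mind, and all of your individual computations are accurate. The subtlety you flag, that both ends of the long edge $v_1v_3$ of $T_5$ are nice in the respective one-vertex-deleted tournaments so that the necessary condition of the corollary on bridge ends cannot rule this edge out and the full conditions (B1)--(B3) are really needed, is a genuine and worthwhile observation that the paper's brevity passes over.
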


By combining~\Cref{Observation:No-nested-operations} with~\Cref{Lemma:nice-vertices} and~\Cref{Lemma:bridge-edge}, we deduce the following.

\begin{lemma} \label{Lemma:No-nested-operation}
    Let $T$ be a $\Delta(1, 2, 2)$-free tournament and let $X \subseteq V(T)$.
    If $T[X]$ is isomorphic to a basic tournament, then substituting tournament on at least two vertices for a vertex in $X$ creates a copy of $\Delta(1, 2,2)$.
    Similarly, if $T[X]$ is isomorphic to a basic tournament, then applying a $P_7^-$-join to any edge in $T[X]$ creates a copy of $\Delta(1, 2, 2)$.
\end{lemma}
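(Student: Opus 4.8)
The plan is to prove Lemma~\ref{Lemma:No-nested-operation} by combining the two ``no-nested-operation'' obstructions from Observation~\ref{Observation:No-nested-operations} with the characterizations of nice vertices and bridges established earlier. The statement has two parts: a substitution claim and a $P_7^-$-join claim, and each follows the same two-step template, namely (i)~observe that the relevant object (a vertex, respectively an edge) inside a copy of a basic tournament fails the structural hypothesis needed to preserve $\Delta(1,2,2)$-freeness, and (ii)~invoke the corresponding equivalence lemma to convert that failure into the creation of a $\Delta(1,2,2)$.

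For the substitution claim, let $X \subseteq V(T)$ induce a basic tournament and let $w \in X$. By Observation~\ref{Observation:No-nested-operations}, $w$ is not a nice vertex \emph{within} $T[X]$; but since $X$ is precisely where the non-niceness witness lives, $w$ is not nice in $T$ either---the three vertices $x, y_1, y_2 \in X$ that form two cyclic triangles with $w$ are still present in $T$. Now suppose we substitute a tournament $S$ on at least two vertices for $w$. Lemma~\ref{Lemma:nice-vertices} states that the result is $\Delta(1,2,2)$-free if and only if $w$ is nice; since $w$ is not nice, the substitution must create a copy of $\Delta(1,2,2)$. (One should note that Lemma~\ref{Lemma:nice-vertices} requires $S$ to be $\Delta(1,2,2)$-free, so strictly I would phrase the argument as: either $S$ already contains $\Delta(1,2,2)$, or $S$ is $\Delta(1,2,2)$-free and the previous sentence applies---in both cases a copy appears.)

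For the $P_7^-$-join claim, let $e = w_1 w_2$ be any edge of $T[X]$ where $T[X]$ is a basic tournament. By Observation~\ref{Observation:No-nested-operations}, $e$ is not a bridge in $T$. Lemma~\ref{Lemma:bridge-edge} asserts that applying a $P_7^-$-join to $e$ yields a $\Delta(1,2,2)$-free tournament with a non-nested homogeneous pair \emph{if and only if} $e$ is a bridge. Since $e$ is not a bridge, the $P_7^-$-join to $e$ either fails to be $\Delta(1,2,2)$-free or produces a nested homogeneous pair. The concluding step is to argue that in \emph{either} branch a copy of $\Delta(1,2,2)$ is created: the first branch gives this directly, and in the nested case I would either note that ``nested'' already packages a copy of $P_7 \supseteq \Delta(1,2,2)$ inside the new tournament, or---more carefully---trace back through the proof of Lemma~\ref{Lemma:bridge-edge} (specifically the violation of~\labelcref{Cond:Bridge1}, which forces a $P_7$ to appear) to exhibit the $\Delta(1,2,2)$ explicitly.

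\textbf{The main obstacle} I anticipate is the second branch of the $P_7^-$-join case: Lemma~\ref{Lemma:bridge-edge} is an ``iff'' whose negation splits into ``not $\Delta(1,2,2)$-free'' \emph{or} ``nested,'' and only the former is literally ``creates a copy of $\Delta(1,2,2)$.'' I would therefore need to confirm that the nested outcome also yields a $\Delta(1,2,2)$---which it does, since $P_7$ contains $\Delta(1,2,2)$ as a subtournament, but this containment should be stated or verified rather than asserted silently. A secondary subtlety is making sure the niceness/bridge failures detected \emph{inside} $T[X]$ genuinely transfer to $T$; this is immediate because all witnessing vertices and edges lie in $X \subseteq V(T)$, so no new vertices are needed, but it is worth one clarifying sentence. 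Because the excerpt explicitly says the proof ``is straightforward from'' the earlier results, I expect the writeup to be short, with the only real care going into the case split just described.
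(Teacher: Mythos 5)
Your substitution half is correct, and it is exactly the combination the paper intends (the paper offers no written proof beyond ``by combining \Cref{Observation:No-nested-operations} with \Cref{Lemma:nice-vertices} and \Cref{Lemma:bridge-edge}''): the witnesses to non-niceness of $w$ lie inside $X$, so $w$ is not nice in $T$, and your case split on whether the substituted tournament $S$ itself contains $\Delta(1,2,2)$ correctly works around the hypothesis of \Cref{Lemma:nice-vertices} that $S$ be $\Delta(1,2,2)$-free.

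The join half, however, has a genuine gap, and it sits exactly where you predicted trouble: your dismissal of the nested branch rests on the claim that $P_7$ contains $\Delta(1,2,2)$, and this claim is false. $P_7$ is a \emph{basic} tournament, i.e., it is itself $\Delta(1,2,2)$-free; this is essential to the whole paper (basic tournaments are an outcome of \Cref{Thm:main-intro}, and \Cref{Thm:colouring-intro} explicitly concerns $\Delta(1,2,2)$-free tournaments that contain $P_7$). For the same reason your fallback fails too: tracing the violation of \labelcref{Cond:Bridge1} through the proof of \Cref{Lemma:bridge-edge} exhibits a copy of $P_7$, not a copy of $\Delta(1,2,2)$. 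Nor can the nested branch be dismissed as vacuous in the setting of this lemma: take $T = T_5$ and $uv = v_1v_3$, so that $N^+(u) \cap N^-(v) = \{v_2\} \neq \emptyset$; then \labelcref{Cond:Bridge1} fails while \labelcref{Cond:Bridge2} and \labelcref{Cond:Bridge3} hold, and \Cref{Lemma:bridge-edge} alone only yields ``a copy of $\Delta(1,2,2)$ is created \emph{or} the pair is nested,'' with the nested outcome actually occurring. Closing the gap needs a further argument along the following lines: suppose $\tilde{T}$ were $\Delta(1,2,2)$-free; since $uv$ is not a bridge, \Cref{Lemma:bridge-edge} forces $(A_1, A_2)$ to be nested, so $A_1 \cup A_2 \cup \{w\}$ induces a copy of $P_7$ in $\tilde{T}$ for some vertex $w$, and by \Cref{Corollary:homogeneous-C7} this set is a homogeneous set of $\tilde{T}$. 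Undoing the join, $\{u, v, w\}$ is then a homogeneous set of $T$, so $\{u, v, w\} \cap X$ is a homogeneous set of $T[X]$ of size two or three, which is nontrivial because every basic tournament has at least five vertices; this contradicts the (easily verified) fact that basic tournaments are prime. Hence $\tilde{T}$ does contain $\Delta(1,2,2)$ --- but it is this primality/homogeneity argument, not any containment of $\Delta(1,2,2)$ in $P_7$, that finishes the proof.
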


Say a paving tournament $P$ is an \emph{initial tournament} of a $\Delta(1, 2, 2)$-free tournament $T$ if $T$ is obtained from $P$ by applying a sequence of substitutions of basic tournaments, and $P_7^-$-joins; and~$\lvert V(P)\rvert$ is minimal subject to this (in other words, when a $P_7^-$-join creates a homogeneous set, $P$ only contains one vertex corresponding to this homogeneous set rather than two as would be the case for the~$P_7^-$-join. 
\Cref{Lemma:No-nested-operation} implies that all vertices and edges that we apply operations to originate from an initial tournament.
Given a~$\Delta(1, 2, 2)$-free tournament $T$ and its initial tournament $P$, let $\mathcal{S}(T, P)$ be the set of vertices in $P$ for which we substitute basic tournaments to obtain $T$; let $\mathcal{J}(T, P)$ be the set of edges in $P$ to which we apply a $P_7^-$-join to create a homogeneous pair, the union of which is not a homogeneous set in $T$.
To prove~\Cref{Thm:Backedge}, it suffices to show the following.

\begin{theorem}\label{Thm:initial-isolated}
    Let $T$ be a $\Delta(1, 2, 2)$-free tournament and let $P$ be its initial tournament. Then there is a paving ordering $\sigma$ of $P$ such that every vertex in $\mathcal{S}(T, P)$ is an isolated vertex in $B_\sigma(P)$, and every edge in $\mathcal{J}(T, P)$ is a backedge under $\sigma$ and an isolated edge in $B_\sigma(P)$.
\end{theorem}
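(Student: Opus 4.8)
The plan is to proceed by induction on $|\mathcal{S}(T,P)| + |\mathcal{J}(T,P)|$, isolating one special object at a time while keeping those already handled isolated. Before the induction I would record three facts about the initial tournament $P$. First, $P$ is $\{T_5, P_7^-, P_7\}$-free: since $P$ is a quotient of the $\Delta(1,2,2)$-free tournament $T$ by homogeneous sets it is itself $\Delta(1,2,2)$-free, so any copy of $T_5$ or $P_7$ in $P$ would be a homogeneous set (\Cref{Lemma:homogeneous-C5}, \Cref{Corollary:homogeneous-C7}) and any copy of $P_7^-$ would have its degree partition as a homogeneous pair; in each case we could contract that substructure to obtain a strictly smaller tournament from which $T$ still arises by substitutions and $P_7^-$-joins, contradicting the minimality of $|V(P)|$. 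Second, the vertices of $\mathcal{S}(T,P)$ are nice in $P$ and the edges of $\mathcal{J}(T,P)$ are bridges in $P$ (the operations building $T$ are applied there, and niceness and bridge-ness are unaffected by the homogeneous structures created elsewhere). Third -- the key local fact -- in \emph{any} paving ordering $\sigma$ of $P$ a nice vertex has backedge-degree at most $1$: if $v_i$ were an internal vertex of a monotone path with path-neighbours $v_{i-2}$ and $v_{i+2}$, then \Cref{Proposition:paving} would yield the two cyclic triangles $\{v_{i-2},v_{i-1},v_i\}$ and $\{v_{i-2},v_i,v_{i+2}\}$, both through the pair $v_{i-2},v_i$, contradicting niceness.

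The base case $\mathcal{S}\cup\mathcal{J}=\emptyset$ is immediate, as any paving ordering works. For the inductive step I would first treat a nice vertex $x\in\mathcal{S}(T,P)$. Deleting $x$ leaves a paving tournament $P\setminus x$ (paving orderings survive vertex deletion, after a clean-up via \Cref{Lemma:only-paving}) whose remaining special objects are $\mathcal{S}\setminus\{x\}$ and $\mathcal{J}$, so by induction it admits a paving ordering $\sigma'$ in which these are isolated. A vertex can be inserted as an isolated vertex exactly when all of its in-neighbours precede all of its out-neighbours in $\sigma'$, and inserting an isolated vertex neither breaks the paving property nor disturbs the already-isolated objects. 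Thus the task is to arrange, by a reshuffle of $\sigma'$ localized away from the other special objects, that the in-neighbourhood of $x$ entirely precedes its out-neighbourhood. Here I would strengthen the insertion machinery behind \Cref{Thm:main}: \Cref{Lemma:Fuzzy-lemma} already confines the ambiguity to a bounded ``fuzzy part,'' and I would use the niceness of $x$ together with the Triangle Reshuffling Lemma (\Cref{Lemma:Reshuffling}, whose exceptional outcomes are excluded because $P$ is $T_5$- and $P_7^-$-free) to eliminate the fuzzy part completely, rather than merely make $x$ paved as in \Cref{Theorem:two-neighbours-same-direction}. Concretely, if $x$ sits at the left end of a backedge $x v_{i+2}$ with $v_{i+1}$ between them, niceness forces $v_{i+1}$ to have no left-neighbour, so reordering the block to $(v_{i+2}, x, v_{i+1})$ makes $x$ isolated while keeping every other vertex paved; any residual violation of \labelcref{Condition:Paving1} is confined to consecutive non-special vertices and is removed by the edge-reducing swaps of \Cref{Lemma:only-paving}, which never touch the now-isolated $x$.

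For a bridge $uv\in\mathcal{J}(T,P)$ I would argue analogously, isolating the pair $\{u,v\}$ as a single-edge component rather than a single vertex. By \Cref{Corollary:bridge-ends}, $u$ is nice in $P\setminus v$ and $v$ is nice in $P\setminus u$, and the conditions \labelcref{Cond:Bridge1}--\labelcref{Cond:Bridge3} say precisely that the cuts on either side of the edge are matchings; deleting $\{u,v\}$ (equivalently, contracting the edge in the sense of the initial tournament) yields a smaller instance to which induction applies. Reinserting $u$ and $v$ into the good ordering of the smaller tournament, I would place them at positions $v=v_b$, $u=v_a$ with $a>b$ so that the only backedge incident to either is $uv$; the matching conditions guarantee that no third vertex acquires a second neighbour, so $uv$ becomes an isolated backedge and the paving property is maintained. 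These reshuffles are again local and, by the backedge-degree-$\le 1$ structure, do not de-isolate the objects handled earlier.

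The step I expect to be the main obstacle is exactly this reinsertion achieving \emph{isolation} rather than mere pavedness: the proofs of \Cref{Lemma:no-fuzzy-part} and \Cref{Theorem:two-nbrs-in-both-direction} produce paving orderings in which the inserted vertex may retain a neighbour, so the real content is to show that niceness (for a vertex) and the matching conditions (for a bridge) force the inserted object to a backedge-degree-$0$, respectively isolated-backedge, position, and to verify that the bounded reshuffle effecting this can always be chosen disjoint from -- hence harmless to -- the objects already isolated by the induction hypothesis. Handling the case where the fuzzy part of $x$ overlaps another special object, and confirming that the clean-up of \Cref{Lemma:only-paving} preserves all isolations simultaneously, is where the bookkeeping is most delicate.
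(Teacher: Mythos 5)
Your plan inverts the paper's strategy, and the inversion is where it breaks. The paper never deletes and reinserts: since $P$ is by definition a paving tournament, a paving ordering of all of $P$ exists from the start, and the proof (split across \Cref{Lemma:substitution-isolated}, \Cref{Lemma:join-backedge}, and the final induction) repairs that ordering in place, inducting on the number of \emph{defective} special objects rather than on $|\mathcal{S}(T,P)|+|\mathcal{J}(T,P)|$. Your ``key local fact'' (a nice vertex has backedge-degree at most $1$ in any paving ordering, and in fact its component is a single backedge with exactly one vertex in between) is indeed the engine of the paper's argument, but the paper uses it to show that any defect is a configuration on $3$--$4$ consecutive vertices, fixable by reordering just those vertices, after which one checks explicitly that no other special object is damaged (e.g.\ that the vertex caught in the reorder is not itself in $\mathcal{S}(T,P)$, since two adjacent substitutions would create $\Delta(1,B_1,B_2)\supseteq\Delta(1,2,2)$ in $T$). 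No insertion machinery from \Cref{Section:Finding-Paving-Orderings} is ever invoked.

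Your delete-recurse-reinsert scheme leaves precisely this repair problem unsolved, and in a harder form. First, the ordering of $P\setminus x$ (or $P\setminus\{u,v\}$) supplied by the induction hypothesis is arbitrary subject to isolating the other special objects; for a bridge $uv$ the three sets $N^-(v)$, $N^+(v)\cap N^-(u)$, $N^+(u)$ may be interleaved in it in an essentially global way, so ``placing $u$ and $v$ so that $uv$ is an isolated backedge'' is not a local insertion at all --- it requires re-sorting the whole ordering, which is exactly the content you defer. Second, the tools you propose for the reshuffle give no locality guarantees: \Cref{Theorem:two-neighbours-same-direction} routes through \Cref{Lemma:Reshuffling} and \Cref{Lemma:only-paving}, and \Cref{Lemma:only-paving} is proved by passing to a \emph{global minimizer} over all orderings satisfying \labelcref{Condition:Paving2}, so there is nothing to ``confirm'' about it preserving prior isolations --- one would have to prove new, locality-controlled versions of that lemma and of the insertion lemmas, plus handle the case where the reshuffle region meets another special object (note that a cleanup swap can even destroy an isolated backedge of $\mathcal{J}(T,P)$ whose endpoints have become consecutive). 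You flag all of this as ``the main obstacle,'' but flagging it is the proof; as written, the proposal is a strategy whose central step is missing. A secondary, fixable gap: invoking the induction hypothesis for $(T',P\setminus x)$ requires showing $P\setminus x$ is an \emph{initial} (minimum) tournament of $T'$ with the correct restricted sets $\mathcal{S},\mathcal{J}$ --- this needs \Cref{Lemma:No-nested-operation} to rule out $x$ reappearing inside a substituted copy, and is not addressed. (Minor remark: your fact that $P$ is $\{T_5,P_7^-,P_7\}$-free needs no minimality/contraction argument, which would itself require knowing the quotient is paving; it is immediate from \Cref{Proposition:C5-or-C6}, since a tournament containing $T_5$ or $P_7^-$ admits no paving ordering.)
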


To prove~\Cref{Thm:initial-isolated}, we first show that there is an ordering of $T$ such that every vertex in $\mathcal{S}(T, P)$ is isolated in the backedge graph.

\begin{lemma} \label{Lemma:substitution-isolated}
	Let $T$ be a $\Delta(1, 2, 2)$-free tournament and let $P$ be its initial tournament.
	Then there is a paving ordering $\rho$ of $P$ such that every vertex in $\mathcal{S}(T, P)$ is an isolated vertex in $B_\rho(P)$.	
\end{lemma}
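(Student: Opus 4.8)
The plan is to reduce the statement to a purely local reshuffling problem. First I would record that every $v\in\mathcal S(T,P)$ is a \emph{nice} vertex of $P$: substituting only the basic tournament $B_v$ (which has at least two vertices and is $\Delta(1,2,2)$-free) for $v$ in $P$ yields a subtournament of $T$, hence a $\Delta(1,2,2)$-free tournament, so $v$ is nice by \Cref{Lemma:nice-vertices}. Since $P$ is a paving tournament, I fix an arbitrary paving ordering $\sigma$ and recall that conditions \labelcref{Condition:Paving1} and \labelcref{Condition:Paving2} force the backedge graph of a paving tournament to be a disjoint union of monotone paths (each vertex has at most one left- and one right-neighbour, and traversing a component the indices stay monotone). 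The task is then to turn $\sigma$ into a paving ordering in which every vertex of $\mathcal S(T,P)$ is an isolated vertex of the backedge graph.

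The structural heart is the claim that a nice vertex has degree at most one in the backedge graph of \emph{any} paving ordering, and that a degree-one nice vertex sits in a rigid local configuration. I would prove this by producing two cyclic triangles sharing an edge whenever the configuration is violated. If $v=v_i$ has both a left-neighbour $v_p$ ($p<i$) and a right-neighbour $v_q$ ($q>i$), then $\{v_i,v_q,v_p\}$ and $\{v_i,v_q,v_{i+1}\}$ are both cyclic triangles through the edge $v_qv_i$ (using $v_i\to v_p\to v_q$ and $v_i\to v_{i+1}\to v_q$, which follow from $v_q$ having only $v_i$ as left-neighbour and $v_i$ only $v_q$ as right-neighbour), so $v_i$ is not nice. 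The same device shows that for a nice $v_i$ whose unique backedge goes right to $v_q$, one must have $q=i+2$ (a neighbour at distance $\ge 3$ gives two interior vertices, hence two triangles on $v_iv_q$), that $v_q=v_{i+2}$ has no right-neighbour (otherwise a third vertex gives a second triangle on $v_iv_{i+2}$), and that $v_{i+1}$ has no left-neighbour (otherwise a second triangle on $v_iv_{i+1}$). Thus niceness pins down exactly the picture needed to reshuffle $v$ locally; the case of a left-neighbour is symmetric under reversing all edges.

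With this in hand I would isolate the vertices of $\mathcal S(T,P)$ one at a time by induction, keeping the already-processed ones isolated. For a nice $v=v_i$ with right-neighbour $v_{i+2}$ (if $v$ is already isolated there is nothing to do), reorder the three consecutive vertices into the order $(v_{i+2},v_i,v_{i+1})$. Then $v_i$ becomes isolated (its only backedge, to $v_{i+2}$, becomes a forward edge and no new backedge appears at $v_i$); by the constraints above the new backedge $v_{i+1}\to v_{i+2}$ makes $v_{i+1}$ the unique right-neighbour of $v_{i+2}$ and $v_{i+2}$ the unique left-neighbour of $v_{i+1}$, so \labelcref{Condition:Paving2} is preserved there, and by \Cref{Proposition:reordering-interval} every other vertex stays paved. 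Any resulting violation of \labelcref{Condition:Paving1} is then removed by \Cref{Lemma:only-paving}, whose swaps only act on endpoints of backedges and therefore never move an isolated vertex. The remaining point to verify is that the reordered block $\{v_i,v_{i+1},v_{i+2}\}$ contains no other vertex of $\mathcal S(T,P)$, for then reordering it and cleaning up leaves all previously isolated substitution vertices isolated (a block on one side of an isolated vertex does not change that vertex's relations, and it is never an endpoint of a swapped backedge).

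The main obstacle is exactly this interference between distinct substitution vertices, and I would resolve it using $\Delta(1,2,2)$-freeness of $T$ rather than niceness. In the block we always have the cyclic triangle $v_i\to v_{i+1}\to v_{i+2}\to v_i$. If $v_{i+2}$ were also in $\mathcal S(T,P)$, substituting basic tournaments $B_{v_i},B_{v_{i+2}}$ (each with at least two vertices) would give $v_{i+1}\Rightarrow B_{v_{i+2}}\Rightarrow B_{v_i}\Rightarrow v_{i+1}$, and taking two vertices from each substituted set together with $v_{i+1}$ as apex yields a copy of $\Delta(1,2,2)$ in $T$; likewise if $v_{i+1}\in\mathcal S(T,P)$ we would get $v_{i+2}\Rightarrow B_{v_i}\Rightarrow B_{v_{i+1}}\Rightarrow v_{i+2}$, again a $\Delta(1,2,2)$. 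Both contradict that $T$ is $\Delta(1,2,2)$-free, so the block meets $\mathcal S(T,P)$ only in $v_i$. These relations are facts about adjacencies in $T$ and do not depend on the current ordering, so the argument is stable across the induction, and iterating the reshuffle produces the desired paving ordering $\rho$ isolating all of $\mathcal S(T,P)$.
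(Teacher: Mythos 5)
Your proposal is correct and follows essentially the same route as the paper's proof: both establish that every vertex of $\mathcal{S}(T,P)$ is nice, deduce that a non-isolated such vertex $v_i$ has exactly one backedge neighbour, located at $v_{i+2}$, with exactly the rigid local picture you describe, reorder the triple to $(v_{i+2},v_i,v_{i+1})$, rule out other vertices of $\mathcal{S}(T,P)$ in the block via $\Delta(1,2,2)$-freeness, and induct on the number of non-isolated vertices of $\mathcal{S}(T,P)$. If anything, your explicit cleanup of (P1) violations by backedge-deleting swaps of consecutive adjacent vertices (which indeed never disturb an isolated vertex) is more careful than the paper at one point: the paper asserts that the reordered $\pi'$ is already a paving ordering once $v_{i+1}$ has no extra left-neighbour, which overlooks the possibility that $v_{i+1}$ has a right-neighbour at $v_{i+3}$, and your swap argument handles exactly that case.
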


\begin{proof}
	Let $n = \lvert V(P) \rvert$ and let $\pi = (v_1, \ldots, v_n)$ be a paving ordering of $P$.
	We show that there is a reordering $\rho$ of $\pi$ such that every vertex in $\mathcal{S}(T, P)$ is an isolated vertex in $B_\rho(P)$ by induction on the number $m$ of vertices in $\mathcal{S}(T, P)$ that are not isolated in $B_\pi(P)$.
	The statement trivially holds when~$m = 0$, so suppose that $m \geq 1$.
	Let $G = B_\pi(P)$ and let $v_i \in \mathcal{S}(T, P)$ be a vertex that is not isolated in $G$.
	Let $C$ be the component of $G$ containing $v_i$.
		
	\begin{claim} \label{Claim:component}
		The component $C$ is a monotone path of length $1$.	
		Moreover, if we let $u$ be the vertex such that $V(C) = \{u, v_i\}$, then there is a unique vertex between $u$ and $v_i$ in $\pi$.
	\end{claim}
	\begin{subproof}[Proof of~\Cref{Claim:component}]
		Suppose to the contrary that $C$ has at least three vertices.
		If $v_i$ has both a left-neighbour $v_h$ and a right-neighbour $v_j$ in $G$, then $h \leq i-2$ and $j \geq i+2$ since $\pi$ is a paving ordering of~$P$.
		However, if we consider the four vertices $v_h, v_{i-1}, v_i, v_j$, then $v_h \Rightarrow \{v_{i-1}, v_j\} \Rightarrow v_i \Rightarrow v_h$ in $T$ so the vertex $v_i$ is not nice, a contradiction. 
		Now, suppose instead that $v_i$ is adjacent to exactly one vertex in~$G$.
		Without loss of generality, assume that $v_i$ has a right-neighbour $v_j$.
		Let $v_k$ be the right-neighbour of $v_j$ in $G$, which exists as $\lvert V(C) \rvert \geq 3$.
		Since $\pi$ is a paving ordering of $P$, we have $j \geq i+2$ and $k \geq j+2$.
		Thus, if we consider the four vertices $v_i, v_{i+1}, v_j, v_k$, then $v_i \Rightarrow \{v_{i+1}, v_k\} \Rightarrow v_j \Rightarrow v_i$ in $P$ so $v_i$ is not nice.
		Hence, we found a contradiction in both cases by~\Cref{Lemma:nice-vertices}, which shows that~$\lvert V(C) \rvert = 2$.
		
		To show the second statement, suppose for a contradiction that there are two distinct vertices $x, y$ between $u$ and $v_i$ in $\pi$.
		Then both $\{u, v_i, x\}$ and $\{u, v_i, y\}$ induce cyclic triangles in $P$, which leads to a contradiction as $v_i$ is a nice vertex.
		This proves the second statement.
	\end{subproof}
	
	Without loss of generality, assume that $V(C) = \{v_i, v_{i+2}\}$.
	Consider an ordering
	\[
		\pi' = (v_1, \ldots, v_{i+2}, v_i, v_{i+1}, \ldots, v_n)
	\]
	of $P$ which is obtained from $\pi$ by reordering $v_i, v_{i+1}, v_{i+2}$.
	Then $\pi'$ is a paving ordering of $P$ unless~$v_{i+1}$ has a $\pi'$-left-neighbour, say $z$.
	However, if this holds, then we have $v_{i+1} \Rightarrow \{v_{i+2}, z\} \Rightarrow v_i \Rightarrow v_{i+1}$ in $P$, which shows that $v_i$ is not a nice vertex.
	Thus, $\pi'$ is a paving ordering of $P$.
	In addition, the vertex $v_i$ is an isolated vertex in $B_{\pi'}(P)$.
	Therefore, the only thing that we need to check is that~$v_{i+1} \notin \mathcal{S}(T, P)$.
	Let $B_1$ and $B_2$ be basic tournaments that are substituted for $v_i$ and $v_{i+1}$, respectively.
	Then $T$ contains~$\Delta(1, B_1, B_2)$, which contains $\Delta(1, 2, 2)$ and we reach a contradiction.
	Thus, $\pi'$ is a paving ordering of $P$ such that the number of non-isolated vertices in $\mathcal{S}(T, P)$ is $m-1$.
	Therefore, by the induction hypothesis, there is a reordering $\rho$ of $\pi$ such that every vertex in $\mathcal{S}(T, P)$ is an isolated vertex in~$B_\rho(T)$.
\end{proof}

    Now, assuming that our ordering satisfies~\Cref{Lemma:substitution-isolated}, we find a reordering that satisfies all desired conditions for $\mathcal{J}(T, P)$ while preserving that every vertex in $\mathcal{S}(T, P)$ is isolated.

\begin{lemma} \label{Lemma:join-backedge}
    Let $T$ be a $\Delta(1, 2, 2)$-free tournament and $P$ be its initial tournament.
    Then there is an ordering $\rho$ of $P$ such that every vertex in $\mathcal{S}(T, P)$ is isolated in $B_\rho(P)$ and every edge in $\mathcal{J}(T, P)$ is a backedge under $\rho$.
\end{lemma}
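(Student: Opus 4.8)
The plan is to begin with the paving ordering $\rho$ supplied by \Cref{Lemma:substitution-isolated}, in which every vertex of $\mathcal S(T,P)$ is already isolated, and to induct on the number of edges of $\mathcal J(T,P)$ that are \emph{not} backedges under the current ordering. If this number is $0$ we are done. Otherwise pick such an edge $uv\in\mathcal J(T,P)$, with arc $u\to v$, so that $u$ precedes $v$; recall that $uv$ is a bridge. I would reorder only the \emph{block} $I$ consisting of $u$, $v$, and the vertices lying between them, so as to move $u$ past $v$ and make $uv$ a backedge. Throughout I maintain the invariant that every vertex is paved (condition \labelcref{Condition:Paving2}); by \Cref{Proposition:reordering-interval} a reordering confined to $I$ automatically keeps every vertex outside $I$ paved, so only the vertices of $I$ need to be rechecked. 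I do not insist on \labelcref{Condition:Paving1}, which the lemma does not require.

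First I would pin down the block. Using the characterisation of the bridge $uv$ through the sets $X=N^-(u)\cap N^-(v)$, $Y=N^-(u)\cap N^+(v)$, $Z=N^+(u)\cap N^+(v)$, $W=N^+(u)\cap N^-(v)$, we have $W=\varnothing$ by \labelcref{Cond:Bridge1}, so every vertex strictly between $u$ and $v$ lies in $X\cup Y\cup Z$. A strictly-between vertex that is isolated in the backedge graph would satisfy $u\to w\to v$, hence lie in $W=\varnothing$; therefore no interior vertex of $I$ is isolated, and in particular $I$ contains no vertex of $\mathcal S(T,P)$, so reordering within $I$ preserves the isolation of $\mathcal S(T,P)$. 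Counting backedge-neighbours and using \labelcref{Condition:Paving2} at $u$ and at $v$ shows the interior $X\cup Y$-vertices are the right-neighbours of $u$ (so at most one) and the interior $Y\cup Z$-vertices are the left-neighbours of $v$ (so at most one); hence $I$ has at most two interior vertices, and a $Y$-vertex, if present, is the only one. The same bookkeeping forces strong local rigidity: for instance, if $u$ had an out-neighbour preceding it then, since $W=\varnothing$, that vertex would lie in $Z$ and would acquire two right-neighbours, contradicting \labelcref{Condition:Paving2}; analogous statements pin down the neighbours of $u$ and $v$ inside $I$.

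With the block understood, I would carry out the reordering case by case on the (at most two) interior vertices. When $I=\{u,v\}$ the rigidity above gives that $u$ has no out-neighbour before it and $v$ has no in-neighbour after it, so the transposition of $u$ and $v$ turns $uv$ into a backedge and keeps every vertex paved. When the single interior vertex is a $Y$-vertex $w$, the three consecutive vertices $u,w,v$ induce a cyclic triangle and the same rigidity lets the order $v,w,u$ do the job. The remaining configurations (one interior $X$- or $Z$-vertex, or one of each) are handled by the analogous explicit local permutation, where \labelcref{Cond:Bridge2} and \labelcref{Cond:Bridge3} constrain the adjacencies inside $I$ enough to preserve \labelcref{Condition:Paving2}; when a permutation leaves a single endpoint only nearly paved I would finish through \Cref{Theorem:two-neighbours-same-direction}, and whenever a triangle reshuffle is convenient I may invoke \Cref{Lemma:Reshuffling}, which since $P$ is a paving tournament and hence $T_5$- and $P_7^-$-free (\Cref{Proposition:C5-or-C6}) always returns the desired paved reordering rather than a forbidden subtournament. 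In each case the new ordering keeps $\mathcal S(T,P)$ isolated, turns $uv$ into a backedge, and leaves the other edges of $\mathcal J(T,P)$ as backedges, so the induction advances and eventually terminates.

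The main obstacle is the bookkeeping in the reordering step: after permuting the at most four vertices of $I$ one must recheck \labelcref{Condition:Paving2} for $u$, $v$, and the interior vertices against their unchanged outside neighbours, and confirm that no copy of $\Delta(1,2,2)$ is created; the point that makes this tractable is that $W=\varnothing$ together with \labelcref{Condition:Paving2} in $\rho$ rigidly determines the interior of $I$, so the permutation is essentially forced and the case list is short, the two-interior-vertex case being the most delicate. A secondary subtlety, which the confinement to $I$ is designed to address, is guaranteeing that correcting $uv$ does not undo an already-corrected edge of $\mathcal J(T,P)$: distinct edges of $\mathcal J(T,P)$ are vertex-disjoint (a $P_7^-$-join expands its edge into the basic tournament $P_7^-$, on which no further operation acts, by \Cref{Lemma:No-nested-operation}), and a previously-corrected edge with an endpoint inside $I$ has its partner either outside $I$---in which case its relative order, and hence its backedge status, is unchanged---or also inside $I$, a case one settles by choosing the permutation of the at most two interior vertices to preserve their relative order.
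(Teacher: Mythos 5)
Your overall strategy (induct on the number of edges of $\mathcal{J}(T,P)$ that are not yet backedges, and repair one such edge $uv$ by a permutation confined to the interval between $u$ and $v$) is the same as the paper's, and your preliminary analysis is correct: no interior vertex is isolated, there is at most one interior vertex in $X\cup Y$ and at most one in $Y\cup Z$, $u$ has no left-neighbour and $v$ has no right-neighbour, and the two cases you actually work out ($I=\{u,v\}$ and a single interior $Y$-vertex) are fine. The gap is exactly where the work is. The cases you defer to ``the analogous explicit local permutation'' are not analogous and are never verified: with one interior $X$-vertex $a$ and one interior $Z$-vertex $b$, the permutation that works is $(a,v,u,b)$, which throws \emph{both} interior vertices outside the pair and in general creates a \emph{new} backedge between $a$ and $b$; checking that \labelcref{Condition:Paving2} survives requires ruling out outside neighbours of $a$ and $b$ via \labelcref{Cond:Bridge2} and \labelcref{Cond:Bridge3}, an argument absent from your sketch. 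Worse, both escape hatches you reserve for trouble are invalid. \Cref{Theorem:two-neighbours-same-direction} concludes only that $P$ \emph{is a paving tournament} --- already known --- and the paving ordering whose existence it asserts carries no information about $\mathcal{S}(T,P)$ or $\mathcal{J}(T,P)$, so ``finishing through it'' abandons precisely the invariants your induction must preserve. Similarly, \Cref{Lemma:Reshuffling} guarantees only that the three reordered vertices become paved; nothing in its conclusion forces $uv$ to come out as a backedge, which is the whole point of the step.

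The idea you are missing --- the one that makes the paper's case analysis collapse --- is the defining property of $\mathcal{J}(T,P)$: the $P_7^-$-join applied to $uv$ must \emph{not} create a homogeneous set in $T$. You only ever invoke the bridge conditions, and those are consistent with the hard configurations, so you cannot exclude them and are forced into the unverified permutation business. The paper instead shows that whenever $u$ and $v$ are not consecutive, the configuration can be reordered so that $u$ and $v$ become two consecutive \emph{isolated} vertices of the backedge graph, i.e.\ $\{u,v\}$ is a homogeneous set of $P$; then the join would create a homogeneous set in $T$, contradicting $uv\in\mathcal{J}(T,P)$. Hence those cases never occur, and only the consecutive case needs an actual reordering. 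A secondary point: by discarding \labelcref{Condition:Paving1} you do prove the literal statement, but the paper's proof keeps the ordering paving throughout, and that stronger output is what the proof of \Cref{Thm:initial-isolated} uses when it takes a single paving ordering enjoying both properties; the ordering your argument produces would not plug into that step.
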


\begin{proof}
    Let $n = \lvert V(P) \rvert$ and let $\pi = (v_1, \ldots, v_n)$ be an ordering of $P$ such that every vertex in $\mathcal{S}(T, P)$ is isolated in $B_\pi(P)$, which exists by~\Cref{Lemma:substitution-isolated}.
    We prove that there is a reordering of $\pi$ that satisfies all desired conditions by induction on the number $f$ of edges in $\mathcal{J}(T, P)$ that are not backedges under~$\pi$.

    By~\Cref{Lemma:substitution-isolated}, the statement holds when $f = 0$, so assume that $f \geq 1$.
    Take an edge $v_i v_j \in \mathcal{J}(T, P)$ with $i < j$.
    First, consider the case when $j \geq i+2$.
    If there is a vertex $v_k$ such that~$i<k<j$ and~$v_i v_k, v_k v_j \in E(P)$, then $v_k \in N^+(v_i) \cap N^-(v_j)$, which is a contradiction by \Cref{Corollary:homogeneous-C7}.
    Thus, every vertex between $v_i$ and $v_j$ in $\pi$ is adjacent to one of $v_i$ and $v_j$.
    Since $\pi$ is a paving ordering of $P$, this implies that $j = i+3$, $v_i$ is adjacent to $v_{i+2}$ in $B_\pi(P)$, and $v_{i+1}$ is adjacent to $v_{i+3}$ in $B_\pi(P)$.

    If $v_i$ has a $\pi$-left-neighbour, then such a vertex is contained in $N^+(v_i) \cap N^-(v_j)$ and this contradicts~\Cref{Lemma:bridge-edge}.
    Thus, $v_{i+2}$ is the only neighbour of $v_i$ in $B_\pi(P)$; by a similar reasoning, $v_{i+1}$ is the unique neighbour of $v_{i+3}$ in $B_\pi(P)$.
    Consider an ordering
    \[
        \pi' = (v_1, \ldots, v_{i+2}, v_i, v_j, v_{i+1}, \ldots, v_n)
    \]
    of $P$, which is obtained from $\pi$ by reordering $v_i, v_{i+1}, v_{i+2}, v_j$.
    Then $\pi'$ is a paving ordering of $P$ unless $v_{i+1}$ has a $\pi'$-left-neighbour other than $v_{i+2}$ or $v_{i+2}$ has a $\pi'$-right-neighbour other than $v_{i+1}$.
    However, if $v_g v_{i+1} \in E(B_{\pi'}(P))$ with $g \leq i-1$, then $v_{i+1} \Rightarrow \{v_g, v_{i+2}\} \Rightarrow \{v_i, v_j\} \Rightarrow v_{i+1}$ in $P$ so $P$ contains $\Delta(1, 2, 2)$, a contradiction.
    By applying a similar argument, we also deduce a contradiction for the other case  so $\pi'$ is a paving ordering of $P$.
    Moreover, both $v_i$ and $v_j$ are isolated in $B_{\pi'}(P)$.
    This shows that $\{v_i, v_j\}$ is a homogeneous set in $T$, so applying a $P_7^-$-join to the edge $v_i v_j$ results in a homogeneous set that induces a copy of $P_7^-$ in $T$. 
    Now we reach a contradiction, so $j = i+1$.

    Observe that $v_i$ or $v_j$ must have a neighbour in $B_\pi(P)$, as otherwise we obtain a homogeneous set after applying a $P_7^-$-join to $v_i v_j$.
    Moreover, if $v_i$ has a $\pi$-left-neighbour or $v_j$ has a $\pi$-right-neighbour, then such a vertex is contained in $N^+(v_i) \cap N^-(v_j)$ and this contradicts~\Cref{Lemma:bridge-edge}.
    Thus, either $v_i$ has a $\pi$-right-neighbour or $v_j$ has a $\pi$-left-neighbour.
    Without loss of generality, assume that $v_i$ has a~$\pi$-right-neighbour $v_\ell$.

    Observe that $\ell \leq j+2$ since otherwise, we have $v_i \Rightarrow \{v_{j+1}, v_{j+2}\} \Rightarrow v_\ell \Rightarrow v_i$ in $P$ so applying a~$P_7^-$-join to $v_i v_j$ results in a copy of $\Delta(1, 2, 2)$.
    Similarly, if $v_\ell$ has a $\pi$-right-neighbour, then $\ell=j+1$.

    Define an ordering $\pi''$ of $P$ according to the value of $\ell$ as follows:
    \begin{itemize}
        \item When $\ell=j+1$, let $\pi'' = (v_1, \ldots, v_j, v_\ell, v_i, \ldots, v_n)$, which is obtained from $\pi$ by reordering $v_i, v_j, v_\ell$; and
        \item when $\ell=j+2$, let $\pi''=(v_1, \ldots, v_j, v_\ell, v_i, v_{j+1}, \ldots, v_n)$, which is obtained from $\pi$ by reordering $v_i, v_j, v_{j+1}, v_\ell$.
    \end{itemize}

    We only consider the case $\ell=j+2$ as a similar consideration works for the case $\ell=j+1$.
    Observe that $v_i$ is adjacent to $v_j $ and $v_{j+1}$ is adjacent to $v_\ell$ in $B_{\pi''}(P)$.
    If $\pi''$ is not a paving ordering of $P$, then~$v_{j+1}$ has a $\pi''$-left-neighbour other than $v_\ell$, say $v_h$.
    However, then $v_i \Rightarrow v_{j+1} \Rightarrow \{v_h, v_\ell\} \Rightarrow v_i$ in~$P$ so applying a $P_7^-$-join to $v_i v_j$ creates a copy of $\Delta(1, 2, 2)$.
    Thus, the ordering $\pi''$ is a paving ordering of~$P$.

    To apply the induction hypothesis, we show that every vertex in $\mathcal{S}(T, P)$ is isolated in $B_{\pi''}(P)$ and the backedge $v_iv_{\ell}$ we removed in $B_{\pi''}(P)$ is not contained in $\mathcal{J}(T, P)$.
    Observe that if there is a vertex in $\mathcal{S}(T, P)$ that is isolated in $B_\pi(P)$ but not in $B_{\pi''}(P)$, then it is $v_{j+1}$.
    However, as $v_i, v_{j+1}, v_\ell$ induces a cyclic triangle in $P$, substituting a basic tournament for $v_{j+1}$ and applying a~$P_7^-$-join to $v_i v_j$ results in a copy of $\Delta(1, 2, 2)$ in $T$.
    Thus, every vertex in $\mathcal{S}(T, P)$ is isolated in $B_{\pi''}(P)$.
    In addition, the edge~$v_\ell v_i$ is a backedge under $\pi$ but not under $\pi''$.
    However, as $v_i v_j \in \mathcal{J}(T, P)$, the edge $v_\ell v_i$ cannot be an element of $\mathcal{J}(T, P)$.
    
    The arguments above show that $\pi''$ is a paving ordering of $P$ such that every vertex in $\mathcal{S}(T, P)$ is isolated in $B_{\pi''}(P)$ and the number of edges in $\mathcal{J}(T, P)$ that are not backedges under $\pi''$ is~$m-1$.
    Therefore, by the induction hypothesis, there is a reordering $\rho$ of $\pi''$ that satisfies the desired conditions.
\end{proof}

\begin{proof}[Proof of~\Cref{Thm:initial-isolated}]
	Let $n = \lvert V(P) \rvert$ and let $\pi = (v_1, \ldots, v_n)$ be a paving ordering of $P$.
	By~\Cref{Lemma:substitution-isolated} and~\Cref{Lemma:join-backedge}, we may assume that every vertex in $\mathcal{S}(T, P)$ is an isolated vertex in $B_\pi(P)$ and every edge in $\mathcal{J}(T, P)$ is a backedge under $\pi$.
	Let $p$ be the number of edges in $\mathcal{J}(T, P)$ that are not isolated edges in $B_\pi(P)$.
	We prove the statement by induction on $p$.
	
	There is nothing to prove when $p=0$, so assume that $p \geq 1$.
	Take an edge $v_i v_j \in \mathcal{J}(T, P)$ that is not isolated in $B_\pi(P)$.
    Note that $i>j$ by the assumption.
    Without loss of generality, assume that~$v_j$ has a neighbour $v_h$ in $B_\pi(P)$. (If $v_i$ also has a neighbour, we apply an analogous to $v_i$ as well.)
    Then~$h \leq j-1$ by~\Cref{Lemma:bridge-edge}.

    If $j \geq h+3$, then $v_j \Rightarrow v_h \Rightarrow \{v_{h+1}, v_{h+2}\} \Rightarrow v_j$ in $P$ so applying a $P_7^-$-join to $v_i v_j$ results in a copy of $\Delta(1, 2, 2)$.
    Thus, since $\pi$ is a paving ordering of $P$, we have $j = h+2$.
    By a similar reasoning, the vertex $v_h$ does not have a $\pi$-left-neighbour.
    Now, consider an ordering 
    \[
        \pi' = (v_1, \ldots, v_{h+1}, v_j, v_h, \ldots, v_n)
    \]
    of $P$, which is obtained from $\pi$ by reordering $v_h, v_{h+1}, v_j$.
    Then $\pi'$ is a paving ordering of $P$ unless~$v_{h+1}$ has a $\pi'$-right-neighbour $v_\ell$ other than $v_h$.
    If this happens, then $v_{h+1} \Rightarrow v_j \Rightarrow \{v_h, v_\ell\} \Rightarrow v_{h+1}$ in $P$ so applying a $P_7^-$-join to $v_i v_j$ results in a copy of $\Delta(1, 2, 2)$, a contradiction.
    Thus, $\pi'$ is a paving ordering of $P$. 

    To apply the induction hypothesis, we show that the ordering $\pi'$ satisfies our assumptions.
    Clearly, the edge $v_i v_j$ is an isolated edge in $B_{\pi'}(P)$.
    Observe that $v_{h+1}$ is the only vertex that is isolated in~$B_\pi(P)$ but not in $B_{\pi'}(P)$.
    However, since three vertices $v_{h}, v_{h+1}, v_j$ induce a cyclic triangle in~$P$, substituting a basic tournament for $v_{h+1}$ and applying a $P_7^-$-join to $v_i v_j$ results in a copy of~$\Delta(1, 2, 2)$ in~$T$, a contradiction.
    Thus, every vertex in $\mathcal{S}(T, P)$ is an isolated vertex in $B_{\pi'}(P)$.
    In addition, the edge $v_j v_h$~is the only edge that is a backedge under $\pi$ but not in $\pi'$, but $v_j v_h \notin \mathcal{J}(T, P)$ as~$v_i v_j \in \mathcal{J}(T, P)$.
    This shows that every edge in $\mathcal{J}(T, P)$ is a backedge under $\pi'$.
    Finally, observe that $E(B_{\pi'}(P)) \setminus E(B_\pi(P)) =\{v_h v_{h+1}\}$. 
    Each edge in $B_{\pi'}(P)$ incident with $v_h$ or $v_{h+1}$ cannot be contained in $\mathcal{J}(T, P)$, as $v_i v_j \in \mathcal{J}(T, P)$ and the three vertices $v_h, v_{h+1}, v_j$ induce a cyclic triangle in $P$.
    This shows that the ordering $\pi'$ satisfies the desired conditions and the number of edges in $\mathcal{J}(T, P)$ which are not isolated in $B_{\pi'}(P)$ is at most $p-1$.
    Therefore, we complete the proof by applying the induction hypothesis to~$\pi'$.
\end{proof}

\section{Applications: Colouring, Transitive Set, and Triangle Packing} \label{Section:Application}

In this section, we prove~\Cref{Thm:colouring-intro},~\Cref{Thm:transitive-intro}, and~\Cref{Thm:triangle-packing-intro}. 
We first prove~\Cref{Thm:colouring-intro} and~\Cref{Thm:transitive-intro}, which we restate below for the reader's convenience.

\begin{customtheorem}{1.3} \label{Thm:Colouring}
    Every $\Delta(1, 2, 2)$-free tournament is $3$-colourable, and it is $2$-colourable if and only if it is $P_7$-free.
\end{customtheorem}

\begin{customtheorem}{1.4} \label{Thm:Transitive}
    Every $\Delta(1, 2, 2)$-free tournament $T$ on $n$ vertices satisfies $\vec{\alpha}(T) \geq \frac{3n}{7}$.
\end{customtheorem}

The tightness of~\Cref{Thm:Transitive} follows from a tournament obtained from a transitive tournament by substituting $P_7$ for each vertex, which is not hard to verify as $\vec{\alpha}(P_7) = 3$.

To prove these theorems, we investigate the structural properties of the backedge graphs of~$\Delta(1, 2, 2)$-free tournaments.
The idea is as follows.
As the next lemma states, if one can properly colour a backedge graph of a tournament with a small number of colours, then the chromatic number of a tournament is also small.
Here, a \emph{proper $k$-colouring} of an ordered graph is a proper $k$-colouring of its underlying graph.
In this paper, we do not omit the term ``proper" to distinguish it from colouring tournaments.

\begin{lemma}[Folklore] \label{Lemma:Colouring-backedges}
    Let $k \geq 1$ be an integer, let $T$ be a tournament, and let $G$ be a backedge graph of $T$.
    If $G$ admits a proper $k$-colouring,  then $T$ is $k$-colourable.
\end{lemma}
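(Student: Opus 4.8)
The plan is to prove this directly by translating a proper $k$-colouring of the backedge graph $G$ into a $k$-colouring of the tournament $T$, using the same colour classes. The key observation is that each colour class of a proper colouring of $G$ is an independent set in the underlying graph of $G$, which means no two vertices of that class form a backedge under the ordering $\sigma$. I would then argue that a set of vertices with no backedges among them induces a transitive subtournament, which is exactly what is required for a valid colour class in a tournament colouring.

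First I would fix an ordering $\sigma = (v_1, \ldots, v_n)$ of $T$ such that $G = B_\sigma(T)$ (after possibly relabelling, we may take $G$ to literally be $B_\sigma(T)$ rather than merely isomorphic to it, since a proper colouring is preserved under isomorphism). Given a proper $k$-colouring $c : V(G) \to [k]$, I would define $f := c$ as a candidate colouring of $T$ and claim it is a valid $k$-colouring. The substantive step is to show that for each colour $i \in [k]$, the set $C_i = \{v : c(v) = i\}$ induces a transitive subtournament of $T$.

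The main verification runs as follows. Since $c$ is proper, no two vertices of $C_i$ are adjacent in $G = B_\sigma(T)$; equivalently, for any two vertices $v_a, v_b \in C_i$ with $a < b$, the edge $v_a v_b$ of $T$ is \emph{not} a backedge under $\sigma$, so it must be a forward edge, i.e. $v_a v_b \in E(T)$. Thus, listing the vertices of $C_i$ in increasing order of their $\sigma$-index gives an ordering in which every edge points forward, which is precisely the statement that $T[C_i]$ is transitive. Since the colour classes $C_1, \ldots, C_k$ partition $V(T)$ and each induces a transitive subtournament, the map $f$ witnesses that $T$ is $k$-colourable.

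I expect \textbf{no serious obstacle} here, since this is a folklore fact and the argument is essentially a definitional unwinding: the crux is simply the equivalence ``edgeless in $B_\sigma(T)$ $\iff$ transitive along the $\sigma$-order,'' which was already noted in the preliminaries (a tournament is transitive if and only if it admits an edgeless backedge graph). The only minor care needed is to handle the isomorphism between $G$ and $B_\sigma(T)$ cleanly, so that a proper colouring of $G$ transfers to one of $B_\sigma(T)$ respecting the same vertex set; this is immediate because proper colourings only depend on the underlying graph and are invariant under graph isomorphism.
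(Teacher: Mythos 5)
Your proof is correct and follows essentially the same route as the paper: identify $G$ with $B_\sigma(T)$, take the colour classes of the proper colouring, and observe that each class, being a stable set in the backedge graph, has all its edges pointing forward in $\sigma$ and hence induces a transitive subtournament. The paper's proof is just a more compressed version of the same argument.
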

\begin{proof}
    By identifying the vertices of $G$ with their isomorphic image in the backedge graph of $T$, we may assume that $V(G) = V(T)$.
    Let $f : V(G) \to [k]$ be a proper $k$-colouring of $G$.
    For each $i \in [k]$, let~$C_i = \{v \in V(G) : f(v) = i\}$.
    Then each $C_i$ is a stable set in $G$, so it induces a transitive subtournament in $T$.
    Thus, the same function $f : V(T) \to [k]$ gives a $k$-colouring of $T$.
\end{proof}

Thanks to~\Cref{Lemma:Colouring-backedges}, we can analyze the chromatic number of tournaments in terms of their backedge graphs.

\begin{proof}[Proof of~\Cref{Thm:Colouring}]
    Let $T$ be a $\Delta(1, 2, 2)$-free tournament and let $G$ be a backedge graph of $T$ satisfying~\Cref{Thm:main-backedge-intro}.
    Since monotone paths, $H_5$, and $H_6$ admit proper $2$-colourings,~\Cref{Lemma:Colouring-backedges}  implies that $T$ is $2$-colourable when $T$ is $P_7$-free.
    Thus, suppose that $T$ contains $P_7$.
    It is not hard to see that $\chi(P_7) = 3$, so $\chi(T) \geq 3$.
    Define a $3$-colouring of $T$ as follows.
    If a vertex is contained in a component $C$ of $G$ not isomorphic to $H_7$, then use a proper $2$-colouring of $C$; otherwise, use a $3$-colouring of $P_7$.
    Then this gives a $3$-colouring of $T$ by~\Cref{Thm:Colouring}.
\end{proof}

Now, let's prove~\Cref{Thm:Transitive}. 
The main idea is similar: 
If a tournament admits a backedge graph of small chromatic number, we can find a large transitive subtournament either from a colouring of the tournament or a proper colouring of its backedge graph, and we may selectively choose between the two methods to obtain a better bound.

\begin{proof}[Proof of~\Cref{Thm:Transitive}]
    Let $T$ be a $\Delta(1, 2, 2)$-free tournament on $n$ vertices and let $G$ be a backedge graph of $T$ satisfying~\Cref{Thm:main-backedge-intro}.
    If $T$ is $P_7$-free, then $T$ is $2$-colourable by~\Cref{Thm:Colouring} so $\vec{\alpha}(T) \geq n/2$.
    Hence, suppose that $T$ contains $P_7$.
    Let $C_1, \ldots, C_t$ be components in $C$ isomorphic to $H_7$ and let~$X = \bigcup_{i=1}^t V(C_i)$.
    By the assumption, $G \setminus X$ admits a proper $2$-colouring.
    Thus, there is a stable set $I_1 \subseteq V(G) \setminus X$ such that $\lvert I_1 \rvert \geq \frac{1}{2} (n - \lvert X \rvert) = \frac{1}{2} (n-7t)$.
    Moreover, since $\alpha(H_7) = 3$, there is $I_2 \subseteq X$ which is a stable set in $G$ and with $|I_2| = 3t$.
    
    Let $I = I_1 \cup I_2$.
    Then $I$ is a stable set in $G$, so $I$ induces a transitive subtournament of $T$.
    Moreover, since $n \geq 7t$, we have
    \[
        \lvert I \rvert 
        =\lvert I_1 \rvert + \lvert I_2 \rvert 
        = \frac{1}{2} (n-7t) + 3t 
        = \frac{n}{2} - \frac{t}{2} \geq \frac{3n}{7}.
    \]
    Therefore, we conclude that $T$ contains a transitive tournament of size at least $\frac{3n}{7}$.
\end{proof}

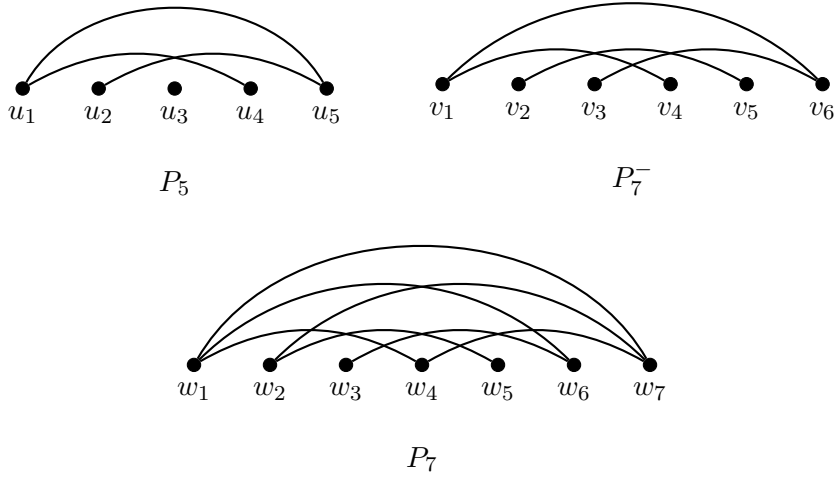
\begin{figure}[t]
    \centering
        \begin{tikzpicture}
            \tikzset{vertex/.style = {shape=circle, fill=black, draw,minimum size=5pt, inner sep=0pt}}
            \tikzset{arc/.style = {->,thick, > = stealth}}
            \tikzset{edge/.style = {thick}}
                             
            \foreach \i in {1,2,...,5}{
              \node[vertex] (\i) at (-3+\i, 0) {};
            }
             
            \foreach \i in {1,2,...,5}{
              \node at (-3+\i, -0.35) {$u_{\i}$};
            }

            \node at (0, -1.25) {$P_5$};

            \draw[edge, bend left=30] (1) to (4);
            \draw[edge, bend left=60] (1) to (5);
            \draw[edge, bend left=30] (2) to (5);
        \end{tikzpicture}  \quad\quad
        \begin{tikzpicture}
            \tikzset{vertex/.style = {shape=circle, fill=black, draw,minimum size=5pt, inner sep=0pt}}
            \tikzset{arc/.style = {->,thick, > = stealth}}
            \tikzset{edge/.style = {thick}}
                             
            \foreach \i in {1,2,...,6}{
              \node[vertex] (\i) at (-3+\i, 0) {};
            }
             
            \foreach \i in {1,2,...,6}{
              \node at (-3+\i, -0.35) {$v_{\i}$};
            }

            \node at (0.5, -1.25) {$P^-_7$};

            \draw[edge, bend left=45] (1) to (6);
            \draw[edge, bend left=30] (2) to (5);
            \draw[edge, bend left=30] (1) to (4);
            \draw[edge, bend left=30] (3) to (6);
        \end{tikzpicture}
        
        \begin{tikzpicture}
            \tikzset{vertex/.style = {shape=circle, fill=black, draw,minimum size=5pt, inner sep=0pt}}
            \tikzset{arc/.style = {->,thick, > = stealth}}
            \tikzset{edge/.style = {thick}}
                             
           \foreach \i in {1,2,...,7}{
              \node[vertex] (\i) at (-4+\i, 0) {};
            }
             
            \foreach \i in {1,2,...,7}{
              \node at (-4+\i, -0.35) {$w_{\i}$};
            }

            \node at (0, -1.25) {$P_7$};

            \draw[edge, bend left=30] (1) to (4);
            \draw[edge, bend left=45] (1) to (6);
            \draw[edge, bend left=60] (1) to (7);
            \draw[edge, bend left=45] (2) to (7);
            \draw[edge, bend left=30] (2) to (5);
            \draw[edge, bend left=30] (3) to (6);
            \draw[edge, bend left=30] (4) to (7);
        \end{tikzpicture} 
    \caption{Backedge graphs of $T_5$, $P_7^-$, and $P_7$ with the minimum number of edges. Note that $(\{v_1, v_2, v_4\}, \{v_3, v_5, v_6\})$ is the degree partition of $P^-_7$.}
    \label{Fig:min-backedge}
\end{figure} 

From now on, we state~\Cref{Thm:triangle-packing-intro} formally and prove it.
As the first step, we first consider the triangle packing in paving tournaments.

\begin{lemma}\label{Lemma:triangle-packing-paving}
    Let $T$ be a paving tournament and let $\sigma$ be a paving ordering of $T$.
    If $B_\sigma(T)$ has $m$ edges, then $\nu(T)\ge\frac{m}{4}$. 
    Furthermore, if $T$ is $\Ptwo$-free, then $\nu(T)\ge\frac{m}{3}$.
\end{lemma}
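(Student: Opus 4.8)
The plan is to prove both inequalities together by induction, after recording two facts that are immediate from the hypotheses. First, by \Cref{Proposition:paving}, for every backedge $v_iv_j$ (written with $i<j$ in the backedge graph, so that $v_j\to v_i$ in $T$) and every index $t$ with $i<t<j$, the three vertices $v_i,v_t,v_j$ induce a cyclic triangle. Second, by~\labelcref{Condition:Paving2} every vertex has at most one left-neighbour and at most one right-neighbour in $B_\sigma(T)$, so $B_\sigma(T)$ is a disjoint union of monotone paths and each vertex is an endpoint of at most two backedges. Since~\labelcref{Condition:Paving1} forces $j\ge i+2$, each backedge spans at least one interior vertex, and hence contributes a triangle through which we will build the packing.

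To make the induction close up under vertex deletion, I would prove a slightly stronger statement for an \emph{arbitrary} ordering $\sigma$ satisfying only~\labelcref{Condition:Paving2}: writing $m^\ast$ for the number of backedges $v_iv_j$ with $j\ge i+2$, one has $\nu(T)\ge m^\ast/4$, and $\nu(T)\ge m^\ast/3$ when $T$ is $\Ptwo$-free. This implies the lemma, because a paving ordering also satisfies~\labelcref{Condition:Paving1}, so $m^\ast=m$. Deleting vertices only removes edges, hence preserves~\labelcref{Condition:Paving2}, so the restriction of $\sigma$ to any induced subtournament is again admissible; I would therefore induct on $|V(T)|$.

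For the inductive step, choose a backedge $e=v_iv_j$ with $j\ge i+2$ whose right endpoint $j$ is as small as possible. Minimality together with~\labelcref{Condition:Paving2} pins down the local picture: neither $v_i$ nor $v_{i+1}$ has a left-neighbour, and $v_i$ is the unique left-neighbour of $v_j$. I would then delete the cyclic triangle $\{v_i,v_{i+1},v_j\}$ and bound the decrease of $m^\ast$, separating the backedges that are \emph{destroyed} (incident with a deleted vertex) from those that are \emph{shortened} (all their spanned vertices are deleted, so they survive only as an $i'=j'-1$ edge). Using the local structure, the only backedges that can be affected are $e$ itself, at most one with left endpoint $v_{i+1}$, at most one with left endpoint $v_j$, and at most one shortened backedge, which minimality forces to be either $v_{j-1}v_{j+1}$ (when $j\ge i+3$) or $v_{i-1}v_{i+3}$ (when $j=i+2$); all other candidates are excluded by the minimality of $j$ or by~\labelcref{Condition:Paving2}. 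Hence $m^\ast$ drops by at most $4$, and the induction gives $\nu(T)\ge 1+(m^\ast-4)/4=m^\ast/4$.

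To obtain the sharper bound $\nu(T)\ge m^\ast/3$ in the $\Ptwo$-free case it suffices to improve this drop from $4$ to $3$, that is, to show that the four affected backedges cannot all be present at once. This configuration analysis is the step I expect to be the main obstacle: one must check that the simultaneous presence of $e$, a backedge out of $v_{i+1}$, a backedge out of $v_j$, and the shortened backedge forces a copy of $\Ptwo$, contradicting the hypothesis. (For instance, the backedges $v_5v_1,v_4v_2,v_6v_3,v_7v_4$ already produce a copy of $\Ptwo$ on $\{v_2,v_3,v_4,v_6,v_7\}$.) A useful constraint for the case analysis is that in any copy of $\Ptwo$ the smallest-index vertex has in-degree at most one by~\labelcref{Condition:Paving2}, so it must be the unique in-degree-one vertex of $\Ptwo$ and its single backward edge identifies the apex; this severely restricts how a copy can sit on $B_\sigma(T)$ and lets one locate the forbidden $\Ptwo$ in each sub-case. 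Once the drop is reduced to $3$, the same induction yields $\nu(T)\ge 1+(m^\ast-3)/3=m^\ast/3$, completing the proof.
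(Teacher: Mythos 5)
Your argument for the first bound, $\nu(T)\ge m/4$, is essentially sound and close in spirit to the paper's: both proofs delete one cyclic triangle spanned by a backedge, bound the number of backedges lost by four, and induct (the paper works at the leftmost vertex $v_1$ rather than at the long backedge of minimal right endpoint, and keeps paving orderings throughout instead of your relaxation to~\labelcref{Condition:Paving2}-only orderings, but these are cosmetic differences). One small inaccuracy: your claim that ``neither $v_i$ nor $v_{i+1}$ has a left-neighbour'' is not forced, since under~\labelcref{Condition:Paving2} alone a short backedge $v_{i-1}v_i$ may exist and does not contradict the minimality of $j$; this is harmless, however, because short backedges are not counted in $m^\ast$ and $\{v_i,v_{i+1},v_j\}$ is still a cyclic triangle by~\labelcref{Condition:Paving2} alone.

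The genuine gap is in the ``furthermore'' part: the configuration analysis you defer is not merely hard, it is false in the case $j=i+2$. Take the tournament on $(v_1,\dots,v_7)$ whose backedges are exactly $v_2v_4,\, v_3v_7,\, v_4v_6,\, v_1v_5$, all other edges pointing forward. This is a paving ordering satisfying~\labelcref{Condition:Paving1} and~\labelcref{Condition:Paving2}; the long backedge of smallest right endpoint is $e=v_2v_4$ (so $i=2$, $j=4$), and all four of your ``affected'' backedges are present: $v_3v_7$ out of $v_{i+1}$, $v_4v_6$ out of $v_j$, and the shortened $v_{i-1}v_{i+3}=v_1v_5$. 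Yet this tournament is $\Ptwo$-free: a copy would require a vertex $x$ with two out-neighbours that both beat two common in-neighbours of $x$, and checking the seven choices of $x$ rules this out. (Your parenthetical example, with backedges $v_2v_4, v_3v_6, v_4v_7, v_1v_5$, is the lucky sub-case in which the backedge out of $v_{i+1}$ ends \emph{before} the one out of $v_j$, and indeed $v_4\Rightarrow\{v_2,v_6\}\Rightarrow\{v_3,v_7\}\Rightarrow v_4$ is a $\Ptwo$; swapping which of the two reaches further, as in my example, destroys every copy.) On my example your inductive step deletes $\{v_2,v_3,v_4\}$, all four counted backedges are destroyed or shortened, and induction yields only $\nu(T)\ge 1$, short of $m/3=4/3$, which demands two disjoint triangles (they exist: $\{v_1,v_2,v_5\}$ and $\{v_3,v_4,v_7\}$); since $v_{i+1}$ is the \emph{only} interior vertex of $e$ when $j=i+2$, your framework offers no alternative triangle through $e$. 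The repair requires an adaptive choice of which triangle to delete, which is exactly what the paper does: in its Case~2 (when an edge such as $v_{i-1}v_{i+1}$ would become a consecutive backedge after deletion), it re-chooses the interior vertex of its triangle to be adjacent to that problematic edge, so the problematic edge is destroyed along with the triangle, and it uses $\Ptwo$-freeness only to show that the relevant vertex has no further right-neighbour, capping the loss at three edges. In my example this corresponds to deleting $\{v_1,v_2,v_5\}$ or $\{v_1,v_4,v_5\}$, which loses only three counted backedges and lets the induction go through.
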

\begin{proof}
    Suppose not, and let $T$ be a counterexample with the fewest number of vertices. 
    Let $B = B_\sigma(T)$ and let $\sigma = (v_1, \dots, v_n)$. 
    Since $\sigma$ is a paving ordering, $v_1$ has at most one neighbour in $B$. 
    If $v_1$ is an isolated vertex, $T\setminus v_1$ contradicts the minimality of $T$. 
    Thus, $v_1$ has a neighbour, say $v_i$, in $B$, where~$i>2$. 

    Observe that the ordered graph $B \setminus \{v_1, v_2, v_i\}$ is a backedge graph of the tournament $T \setminus \{v_1, v_2, v_i\}$ and it satisfies~\ref{Condition:Paving2}.
    However, it is not guaranteed that $B \setminus \{v_1, v_2, v_i\}$ satisfies~\ref{Condition:Paving1}.
    We consider the following two cases according to this.
    
    \begin{customcase}{1}\label{case:paving}
        $B\setminus \{v_1, v_2, v_i\}$ satisfies~\ref{Condition:Paving1}.
    \end{customcase}
     Note that $v_1$ and $v_2$ have degree at most one in $B$, so $B\setminus \{v_1, v_2, v_i\}$ has at most three fewer edges than $B$. 
     By the minimality of $T$, $T\setminus \{v_1, v_2, v_i\}$ has a triangle packing of size $\frac{m-3}{4}$. 
     But $\{v_1,v_2,v_i\}$ forms a cyclic triangle, so $T$ has a triangle packing of size $1+\frac{m-3}{4} \geq \frac{m}{4}$. 
     
     \begin{customcase}{2}\label{case:not-paving}
         $B\setminus \{v_1, v_2, v_i\}$ does not satisfy~\ref{Condition:Paving1}.
     \end{customcase}
     Then there is an edge $v_jv_k$ in $B$ such that $v_j$ and $v_k$ become consecutive upon deleting $ \{v_1, v_2, v_i\}$. 
     Since there are no vertices in $\sigma$ before $\{v_1,v_2\}$, we conclude that $\{v_j, v_k\} = \{v_{i-1}, v_{i+1}\}$. 
     But then, $B\setminus \{v_1, v_{i-1}, v_i\}$ is a paving backedge graph of $T\setminus \{v_1, v_{i-1}, v_i\}$ with at least $m-4$ edges (losing the edge $v_1v_i$, $v_{i-1}v_{i+1}$ as well as possibly an edge from $v_i$ to its right neighbour and an edge from $v_{i-1}$ to its left neighbour). 
     By minimality, $T\setminus \{v_1, v_{i-1}, v_i\}$ has a triangle packing of size $\frac{m-4}{4}$, so $T$ has a triangle packing of size $\frac{m}{4}$. 

    Now, suppose that $T$ is $\Ptwo$-free.
    We claim that we can always remove a triangle and at most three edges and apply the induction hypothesis. 
    This is immediate in~\Cref{case:paving}. 
    In~\Cref{case:not-paving}, where $v_{i-1}v_{i+1}$ is an edge, note that $v_i$ does not have any right-neighbour $v_l$, as otherwise $v_i\Rightarrow\{v_1, v_{i+1}\}\Rightarrow\{v_{i-1},v_{l}\}\Rightarrow v_i$ forms $\Ptwo$. 
    Thus, we obtain the lower bound $\frac{m}{3}$ on the size of a triangle packing. 
\end{proof}

We also need the following observation.
See~\Cref{Fig:min-backedge} for an illustration.

\begin{observation}\label{Observation:basic-tournament-backedges}
    The minimum number of edges in the backedge graphs of $T_5$, $P_7^-$, and $P_7$ are $3$, $4$, and $7$, respectively.
\end{observation}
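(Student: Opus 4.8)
The quantity to control is, for each tournament, the minimum number of backedges over all orderings, since the number of edges of $B_\sigma(T)$ equals the number of backedges under $\sigma$. The upper bounds $3,4,7$ are witnessed directly by the orderings drawn in~\Cref{Fig:min-backedge}, so the whole content lies in the matching lower bounds. The plan is to bound the number of backedges from below using two complementary tools. The first tool is the elementary observation that \emph{in any ordering, every cyclic triangle contains at least one backedge}: if all three arcs of a triangle pointed forward, the three vertices would induce a transitive triangle in the order. Consequently, if a tournament contains $k$ pairwise arc-disjoint cyclic triangles, then every backedge graph has at least $k$ edges, because arc-disjoint triangles force distinct backedges.

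For $P_7$ and $P_7^-$ this tool suffices. Recall $v_iv_j\in E(P_7)$ iff $j-i\in\{1,2,4\}\pmod 7$. I would exhibit the seven triangles $T_i=\{v_i,v_{i+1},v_{i+3}\}$ for $i\in\mathbb{Z}_7$; each is cyclic since its three arcs have index-differences $1,2,4$, all of which are arcs of $P_7$, and translation by $i$ preserves $E(P_7)$. The arcs of $T_i$ are the difference-$1$ arc out of $v_i$, the difference-$2$ arc out of $v_{i+1}$, and the difference-$4$ arc out of $v_{i+3}$; as $i$ ranges over $\mathbb{Z}_7$ these exhaust each of the $7$ arcs in each of the three difference classes exactly once, so $\{T_i\}_{i\in\mathbb{Z}_7}$ partitions $E(P_7)$ into $7$ arc-disjoint cyclic triangles. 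This gives at least $7$ backedges, matching the upper bound. For $P_7^-=P_7\setminus v_0$, I would keep those $T_i$ avoiding $v_0$, namely $T_1,T_2,T_3,T_5$; these four remain arc-disjoint cyclic triangles inside $\{v_1,\dots,v_6\}$, yielding at least $4$ backedges.

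The case $T_5$ is the main obstacle, because arc-disjointness alone is too weak: $T_5$ has only five cyclic triangles, and one checks that at most two of them are arc-disjoint, giving only the bound $2$. Here I would instead use a degree-counting argument. Fix any ordering $\sigma$ and reverse all backedges; the result is the transitive tournament ordered by $\sigma$, whose out-degree multiset is $\{0,1,2,3,4\}$. Since $T_5$ is $2$-regular and each backedge reversal raises exactly one out-degree by $1$ (while lowering another by $1$), the total out-degree gain $\sum_v\max(d^+_{\mathrm{new}}(v)-2,\,0)=(3-2)+(4-2)=3$ cannot exceed the number of reversed arcs. Hence every backedge graph of $T_5$ has at least $3$ edges. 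Combining the three lower bounds with the orderings of~\Cref{Fig:min-backedge} proves the claimed minima $3,4,7$.
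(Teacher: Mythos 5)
Your proposal is correct, and it supplies more than the paper does: in the paper this statement is a bare observation with no written proof, where \Cref{Fig:min-backedge} serves only as the witness for the upper bounds $3$, $4$, $7$, and the matching lower bounds are left implicit. Both of your lower-bound tools are sound. The seven triangles $\{v_i,v_{i+1},v_{i+3}\}$, $i\in\mathbb{Z}_7$, do partition the $21$ arcs of $P_7$ into arc-disjoint cyclic triangles (each triangle uses one arc from each difference class $1,2,4$, and as $i$ varies these translates exhaust every class), every cyclic triangle must contain at least one backedge in any ordering, and the four translates avoiding the deleted vertex survive in $P_7^-$; this gives the sharp lower bounds $7$ and $4$. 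For $T_5$, your reversal-counting argument is also valid: reversing the $m$ backedges of any ordering turns the $2$-regular tournament $T_5$ into a transitive tournament with out-degree sequence $0,1,2,3,4$, each reversal raises exactly one out-degree by one, and the vertices ending at out-degrees $3$ and $4$ require total positive gain $1+2=3$, so $m\ge 3$. A point worth preserving from your write-up is that each tool is genuinely necessary where you deploy it: the packing bound is capped at $2$ for $T_5$ (its five cyclic triangles have a pairwise-arc-disjointness graph that is a $5$-cycle, so no three are pairwise arc-disjoint), while the degree-reversal bound yields only $1+2+3=6$ for $P_7$ and only $3$ for $P_7^-$, short of the true minima $7$ and $4$. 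So neither tool alone closes all three cases, and your case-by-case pairing is exactly what makes the proof complete; the only ingredient you inherit without proof, just as the paper does, is that the ordered graphs in \Cref{Fig:min-backedge} really are backedge graphs of $T_5$, $P_7^-$, and $P_7$.
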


Before stating the main theorem, we first explain the ``nice" ordering that we use.
Given a $\Ptwo$-free tournament $T$ and its initial tournament $P$, we say an ordering $\sigma$ of $T$ is \emph{natural} if it is obtained from the ordering of $P$ we obtain from \Cref{Thm:initial-isolated} by performing the appropriate substitutions and joins on $\mathcal{S}(T, P)$ and $\mathcal{J}(T, P)$, using the orderings as shown in Figure \ref{Fig:min-backedge} for substitutions, and the canonical ordering of $P^-_7$ (see~\Cref{Figure:P_7^-}) for $P_7^-$-joins which are not homogeneous sets. 

Finally, we prove our third application result by combining~\Cref{Lemma:triangle-packing-paving} with~\Cref{Thm:initial-isolated}.

\begin{theorem} \label{Thm:triangle-packing}
    Let $T$ be a $\Ptwo$-free tournament and let $\sigma$ be a natural ordering of $T$. 
    If $B_\sigma(T)$ has $m$ edges, then $\nu(T) \geq \frac{2m}{7}$ and this is best possible.
\end{theorem}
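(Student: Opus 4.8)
The plan is to combine the structural decomposition coming from \Cref{Thm:initial-isolated} with the paving estimate of \Cref{Lemma:triangle-packing-paving}, while handling the basic-tournament blocks through their exact triangle-packing numbers. First I would fix an initial tournament $P$ of $T$ and a natural ordering $\sigma$, so that by \Cref{Thm:initial-isolated} the components of $B_\sigma(T)$ are precisely: monotone paths coming from the paving part of $P$; consecutive blocks inducing $T_5$ or $P_7$ (homogeneous sets); blocks inducing $P_7^-$ by substitution (homogeneous sets); and the two flocks of a $P_7^-$ produced by each $P_7^-$-join (homogeneous pairs). By \Cref{Observation:basic-tournament-backedges} together with the orderings prescribed in the definition of a natural ordering, such a block contributes $3$, $4$, $5$, or $7$ edges to $m$ according to whether it is a substituted $T_5$, a substituted $P_7^-$, a joined $P_7^-$, or a substituted $P_7$.

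Second, I would record the packing numbers $\nu(T_5)=1$, $\nu(P_7^-)=2$, and $\nu(P_7)=2$: each is immediate since $P_7$ has seven vertices carrying two disjoint cyclic triangles, deleting any vertex still leaves two by vertex-transitivity, and $T_5$ contains a cyclic triangle but has only five vertices. Matching these against the edge counts above gives, for each block in isolation, a ratio of triangles to edges of at least $\tfrac13,\tfrac12,\tfrac25,\tfrac27$ respectively, all at least $\tfrac27$, with equality only for $P_7$. For the union $R$ of the monotone-path components, the restriction of $\sigma$ to $T[R]$ obeys \labelcref{Condition:Paving2}, so by \Cref{Lemma:only-paving} $T[R]$ is a $\Ptwo$-free paving tournament and \Cref{Lemma:triangle-packing-paving} yields a packing of size at least $\tfrac13$ of the edges of a paving ordering of $T[R]$, again beating $\tfrac27$. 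As the blocks are pairwise vertex-disjoint components and are disjoint from $R$, summing these packings would give the bound—were it not for one gap.

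The main obstacle is that a monotone-path backedge $v_iv_j$ may have all of its intermediate vertices lying inside blocks (a \emph{collapsible} edge): by \Cref{Proposition:paving} it spans a cyclic triangle $v_i\Rightarrow w\Rightarrow v_j\Rightarrow v_i$ whose apex $w$ is forced to lie in a block, so this triangle is counted neither by the paving estimate on $T[R]$ (where such an edge collapses) nor by the internal packing of the block. Substituting $T_5$ into a single cyclic triangle already shows that the naive sum undercounts. I would resolve this by a charging/enlargement argument: attach each collapsible edge, together with an apex $w$ taken from the block it crosses, to that block. A substituted $T_5$ (two spare vertices) or $P_7$ (one spare vertex) provides $w$ without disturbing its internal packing, so the extra triangle is pure profit; a $P_7^-$-block has no spare, so I would instead trade one internal triangle for the collapsible one, still obtaining two triangles while now also covering the collapsible edge, giving the enlarged unit a ratio of $\tfrac25$ (substitution) or $\tfrac26$ (join). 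The delicate step—the one I expect to be hardest—is the global accounting when a flanking vertex $v_i$ or $v_j$ itself carries a further path-backedge, and in particular the $P_7^-$-join case, where each of the two (possibly separated) flocks can be flanked; here one must use \labelcref{Condition:Paving2} to see that a flanking vertex on a given side is unique, so each block is flanked by boundedly many collapsible edges, and then check that the additional edges incident to the flanking vertices are absorbed without pushing any unit below $\tfrac27$. Organising this as an induction that removes one block together with its flanking triangle(s) at each step, and invoking \Cref{Lemma:triangle-packing-paving} in the base case when only monotone paths remain, will finish the argument.

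Finally, tightness follows from the tournament obtained by substituting $P_7$ for every vertex of a transitive tournament on $k$ vertices: then $B_\sigma(T)$ is a disjoint union of $k$ copies of the $7$-edge backedge graph of $P_7$, no cyclic triangle meets two copies, and $\nu(T)=2k=\tfrac{2m}{7}$.
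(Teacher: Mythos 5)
Your strategy—decompose $B_\sigma(T)$ into blocks and the path part $R$, sum the packings, and repair the undercount by charging---is genuinely different from the paper's proof, and you have correctly located the crux: backedges all of whose intermediate vertices lie in blocks (your ``collapsible'' edges) disappear when \Cref{Lemma:triangle-packing-paving} is applied to $T[R]$, so the naive sum fails (your $T_5$-in-a-triangle example is exactly right, as is your tightness construction). But the proposal has a genuine gap at precisely the step you defer as ``delicate'': the global accounting is never carried out, and it cannot be routine, because the bound has \emph{zero slack} in several configurations, so the charging must be exact. Concretely: (i) a substituted $P_7$ block already sits at ratio exactly $\tfrac27$, and a joined $P_7^-$ whose two flocks both lie under collapsible edges yields, after your trade, $2$ triangles against $5+2=7$ edges---again exactly $\tfrac27$---so any further conflict must be resolved with no loss whatsoever; (ii) collapsible edges can form arbitrarily long chains sharing endpoints (substitute $P_7$ for every second vertex of a long monotone-path paving tournament), so their apex triangles conflict pairwise \emph{and} with the packing of $T[R]$, which you invoke via \Cref{Lemma:triangle-packing-paving} as a black box and whose vertex set you cannot control; (iii) your closing induction ``remove a block together with its flanking triangle(s)'' deletes flanking path vertices, which can turn previously non-collapsible edges into collapsible ones (an edge whose only path intermediate was the deleted flanking vertex), so the bookkeeping cascades rather than terminates. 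None of these is addressed beyond ``then check,'' and since several units sit exactly at $\tfrac27$, the check is the proof.

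It is worth seeing how the paper sidesteps all of this: it inducts on the number of operations and peels off the \emph{last} one, and---this is the idea your sketch is missing---for substitutions it \emph{contracts} the block rather than deleting it. The inductive packing lives in the smaller tournament $T'$ that still contains the nice vertex $v$; any triangle of that packing through $v$ (in particular a triangle using one of your collapsible edges) survives in $T$ after replacing $v$ by a spare vertex of the substituted copy, so substitutions lose nothing and contribute $+1$ triangle per $3$ edges ($T_5$) or $+2$ per $7$ ($P_7$). Only the $P_7^-$-join requires an actual deletion argument, and there the paper shows the natural ordering of $T\setminus X$ loses at most two backedges (one per endpoint of the bridge), which is exactly absorbed by the join's two internal triangles covering $7\cdot\tfrac27=2$. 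To complete your bottom-up version you would essentially have to rediscover this contract-don't-delete mechanism, or else prove an exact, conflict-free charging scheme; as written, the proposal is a plausible plan but not yet a proof.
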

\begin{proof}
    Let $P$ be the initial paving tournament for $T$. We proceed by induction on~$|\mathcal{S}(T,P)|+|\mathcal{J}(T,P)|$. 
    The base case~$|\mathcal{S}(T,P)|+|\mathcal{J}(T,P)|=0$ follows from~\Cref{Lemma:triangle-packing-paving}. 
    Assume $|\mathcal{S}(T,P)|+|\mathcal{J}(T,P)|\ge 1$. 
    By the definition of a natural ordering and \Cref{Thm:initial-isolated}, we can choose the last operation so that it is applied at an isolated vertex or isolated edge.
    
    We first handle the case when the last operation is the substitution. 
    Let $T'$ be a tournament so that $T$ be obtained from $T'$ by substituting $T_5$ or $P_7$ for a nice vertex $v$, and let $\sigma'$ be the ordering of $T'$ that is obtained from $\sigma$ in a natural way.
    Let $m'=|E(B_{\sigma'}(T'))|$. 
    It suffices to show that
    
    \begin{equation*}\label{eq:change-ineq}
      \nu(T) - \nu(T')\ge \frac{2}{7}(m-m') \tag{$\dagger$}.
    \end{equation*}
    
    \begin{customcase}{1}
        $T$ is obtained from $T'$ by substituting $T_5$ for $v$.
    \end{customcase}
    By~\Cref{Thm:initial-isolated}, the vertex $v$ is isolated in $B_{\sigma'}(T')$, so all new backedges are internal to the substituted copy. 
    Hence $m=m'+3$ by~\Cref{Observation:basic-tournament-backedges}. 
    Let $X$ be the vertex set of the copy of $T_5$ which we substitute for $v$. 
    For an optimum triangle packing $\mathcal{P}$ of $T'$, we proceed as follows: 
    \begin{itemize}
        \item Add a triangle contained in $X$, say with vertex set $X'$; 
        \item If $v$ is contained in a triangle of $\mathcal{P}$, replace $v$ by a vertex in $X \setminus X'$. 
    \end{itemize}
    This shows that $\nu(T)\ge \nu(T')+1$, as desired.
    
    \begin{customcase}{2}
        $T$ is obtained from $T'$ by substituting $P_7$ for $v$.
    \end{customcase}
    Similarly to the above, $m=m'+7$ by \Cref{Observation:basic-tournament-backedges}, every triangle of $T'$ is a also a triangle of $T$, and the new copy of $P_7$ supplies two disjoint cyclic triangles (plus one more vertex to replace a possible occurrence of $v$ in the triangle packing of $T'$), so again~\Cref{eq:change-ineq} is satisfied.

    The case of performing a $P_7^-$-join follows by a slightly more intricate argument.
    \begin{customcase}{3}
        $T$ is obtained from a smaller tournament by applying the $P_7^-$-join to an edge $uv$.
    \end{customcase}

    Let $X$ be the vertex set of the copy of $P_7^-$ in $T$ that we create. 
    Let $\rho$ be the ordering of $P$ we obtain from~\Cref{Thm:initial-isolated}. 
    Consider the ordering $\rho'$ of $P' = P \setminus \{u, v\}$ obtained from restricting $\rho$. 
    Let $u', u''$ be the vertices immediately before and after $u$ in $\rho$; define $v', v''$ analogously. 
    If $\rho'$ is not a paving ordering of $P'$, then either $u'u''$ or $v'v''$ is an edge of $B_{\rho'}(P')$. 
    For each of them, we consider two cases; we state them here in terms of $u'u''$: 
    \begin{itemize}
        \item If $u''u'$ it not in $\mathcal{J}(T, P)$, we switch their order in $\rho'$; the associated natural ordering of $T \setminus X$ has one fewer backedge than that of $B_\sigma(T) \setminus X$.
        \item If $u''u'$ is in $\mathcal{J}(T, P)$, then we replace them (in $P'$) with a single vertex $\tilde{u}$, which we add to $\mathcal{S}(T \setminus X, P')$; rather than a $P^-_7$-join along $u''u'$, note that substituting $P_7^-$ for $\tilde{u}$ yields the same result. 
        The associated natural backedge graph of $T \setminus X$ has one fewer backedge than $B_\sigma(T) \setminus X$ (because we use the optimum ordering of $P_7^-$ as in~\Cref{Fig:min-backedge} rather than the canonical ordering when $P_7^-$ is a homogeneous set). 
    \end{itemize}
    In each case, we obtain a natural ordering of $T \setminus X$ with at least $m - 7$ backedges (there are 5 backedges in the canonical ordering of $P_7^-$, and for each of $u'u''$ and $v'v''$, we lose at most one further edge). Moreover, $T[X]$ contains two disjoint triangles, so $\nu(T) \geq \nu(T \setminus X) + 2$. 
    Since $|\mathcal{S}(T \setminus X,P')|+|\mathcal{J}(T \setminus X,P')| \leq  |\mathcal{S}(T,P)|+|\mathcal{J}(T,P)| - 1$, the result follows by induction. 

    Finally, the optimality follows from substituting $P_7$ for each vertex of a transitive tournament, since $\nu(P_7)=2$.
\end{proof}

\section*{Acknowledgement}

We are thankful to Xinyue Fan and Cece Henderson for many helpful discussions. 
We would like to thank Sang-il Oum for a careful reading and for helpful comments that improved the presentation of the paper.

\providecommand{\bysame}{\leavevmode\hbox to3em{\hrulefill}\thinspace}
\providecommand{\MR}{\relax\ifhmode\unskip\space\fi MR }
\providecommand{\MRhref}[2]{%
  \href{http://www.ams.org/mathscinet-getitem?mr=#1}{#2}
}

\end{document}